\documentclass[11pt]{article}
\usepackage[T1]{fontenc}
\usepackage{amsmath,amsthm,amsfonts,amssymb,microtype,thmtools,underscore,mathtools,anyfontsize,thm-restate,graphicx,mathtools}
\usepackage[usenames,dvipsnames,svgnames,table]{xcolor}
\usepackage[unicode=true]{hyperref}
\hypersetup{
colorlinks=true,
breaklinks=true,
linkcolor={black},
citecolor={black},
urlcolor={blue!60!black},
pdftitle={Product Structure of Graph Classes with Strongly Sublinear Separators}
pdfauthor ={Zden\v{e}k Dvo\v{r}\'{a}k, David R. Wood}}
\usepackage[noabbrev,capitalise,nameinlink]{cleveref}
\crefname{lem}{Lemma}{Lemmas}
\crefname{thm}{Theorem}{Theorems}
\crefname{cor}{Corollary}{Corollaries}
\crefname{prop}{Proposition}{Propositions}
\crefname{conj}{Conjecture}{Conjectures}
\crefname{open}{Open Problem}{Open Problems}
\crefformat{equation}{(#2#1#3)}
\Crefformat{equation}{Equation #2(#1)#3}
\crefformat{enumi}{#2#1#3}
\Crefformat{enumi}{Item (#2#1#3)}
\usepackage[shortlabels]{enumitem}
\setlist[itemize]{topsep=0ex,itemsep=0ex,parsep=0.25ex}
\setlist[enumerate]{topsep=0ex,itemsep=0ex,parsep=0.25ex}
\newcommand{\dims}{d}
\newcommand{\tdb}{t}
\newcommand{\defn}[1]{\textcolor{Maroon}{\emph{#1}}}

\newcommand{\WW}{\mathcal{W}}

\newcommand{\GG}{\mathcal{G}}

\newcommand{\PP}{\mathcal{P}}

\newcommand{\NN}{\mathbb{N}}
\newcommand{\REALS}{\mathbb{R}}
\usepackage[longnamesfirst,numbers,sort&compress]{natbib}
\newcommand{\mycitet}[1]{\mbox{\citet{#1}}}
\setlength{\bibsep}{0.4ex plus 0.1ex minus 0.1ex} 
\usepackage[bmargin=25mm,tmargin=25mm,lmargin=30mm,rmargin=30mm]{geometry}
\renewcommand{\baselinestretch}{1.1}
\setlength{\footnotesep}{\baselinestretch\footnotesep}
\setlength{\parindent}{0cm}
\setlength{\parskip}{1.25ex}
\allowdisplaybreaks

\DeclarePairedDelimiter{\abs}{\lvert}{\rvert}
\DeclarePairedDelimiter{\floor}{\lfloor}{\rfloor}
\DeclarePairedDelimiter{\ceil}{\lceil}{\rceil}
\newcommand{\bigfloor}[1]{\big\lfloor{#1}\big\rfloor}
\renewcommand{\ge}{\geqslant}
\renewcommand{\le}{\leqslant}
\renewcommand{\geq}{\geqslant}
\renewcommand{\leq}{\leqslant}
\renewcommand{\emptyset}{\varnothing}
\DeclareMathOperator{\dist}{dist}

\DeclareMathOperator{\sep}{sep}

\DeclareMathOperator{\tw}{tw}
\DeclareMathOperator{\pw}{pw}

\DeclareMathOperator{\spw}{spw}

\DeclareMathOperator{\tpw}{tpw}
\DeclareMathOperator{\td}{td}

\renewcommand{\thefootnote}{\fnsymbol{footnote}}
\allowdisplaybreaks
\theoremstyle{plain}
\newtheorem{thm}{Theorem}
\newtheorem{lem}[thm]{Lemma}
\newtheorem{open}[thm]{Open Problem}
\newtheorem*{claim}{Claim}
\newtheorem{cor}[thm]{Corollary}
\newtheorem{prop}[thm]{Proposition}
\newtheorem{obs}[thm]{Observation}
\theoremstyle{definition}


\newcommand{\omitted}[1]{

\smallskip
\framebox{\parbox{\textwidth}{#1}}

\smallskip}
\renewcommand{\omitted}[1]{}

\begin{document}

\footnotetext{\today}

\author{Zden\v{e}k Dvo\v{r}\'{a}k\footnotemark[2] \qquad 
David~R.~Wood\footnotemark[2]}

\footnotetext[1]{Institute of Computer Science, Charles University, Prague, Czech Republic (\texttt{rakdver@iuuk.mff.cuni.cz}). Supported by project 22-17398S (Flows and cycles in graphs on surfaces) of Czech Science Foundation.}

\footnotetext[2]{School of Mathematics, Monash University, Melbourne, Australia  (\texttt{david.wood@monash.edu}). Research supported by the Australian Research Council and a Visiting Research  Fellowship of Merton College, University of Oxford. }

\sloppy
	
\title{\Large\textbf{Product Structure of Graph Classes\\ with Strongly Sublinear Separators}}

\date{}	

\maketitle

\begin{abstract}
We investigate the product structure of hereditary graph classes admitting strongly sublinear separators. 
We characterise such classes as subgraphs of the strong product of a star and a complete graph of strongly sublinear size.
In a more precise result, we show that if any hereditary graph class $\GG$ admits $O(n^{1-\epsilon})$ separators, then for any fixed $\delta\in(0,\epsilon)$ every $n$-vertex graph in $\GG$ is a subgraph of the strong product of a graph $H$ with bounded tree-depth and a complete graph of size $O(n^{1-\epsilon+\delta})$. This result holds with $\delta=0$ if we allow $H$ to have tree-depth $O(\log\log n)$. Moreover, using extensions of classical isoperimetric inequalties for grids graphs, we show the dependence on $\delta$ in our results and the above $\td(H)\in O(\log\log n)$ bound are both best possible. We prove that $n$-vertex graphs of bounded treewidth are subgraphs of the product of a graph with tree-depth $\tdb$ and a complete graph of size $O(n^{1/\tdb})$, which is best possible. Finally, we investigate the conjecture that for any hereditary graph class $\GG$ that admits $O(n^{1-\epsilon})$ separators, every $n$-vertex graph in $\GG$ is a subgraph of the strong product of a graph $H$ with bounded tree-width and a complete graph of size $O(n^{1-\epsilon})$. We prove this for various classes $\GG$ of interest. 
\end{abstract}


\renewcommand{\thefootnote}{\arabic{footnote}}

\newpage
\section{Introduction}
\label{Introduction}

Graph\footnote{We consider simple, finite, undirected graphs~$G$ with vertex-set~${V(G)}$ and edge-set~${E(G)}$. A \defn{graph class} is a collection of graphs closed under isomorphism. A graph class is \defn{hereditary} if it is closed under taking induced subgraphs. A graph class is \defn{monotone} if it is closed under taking subgraphs. A graph~$H$ is \defn{contained} in a graph $G$ if $H$ is isomorphic to a subgraph of $G$. A graph~$H$ is a \defn{minor} of a graph~$G$ if~$H$ is isomorphic to a graph obtained from a subgraph of~$G$ by contracting edges. A graph~$G$ is \defn{$H$-minor-free} if~$H$ is not a minor of~$G$. A graph class~$\GG$ is \defn{minor-closed} if every minor of each graph in~$\GG$ is also in~$\GG$. A \defn{$K_h$-model} in a graph $G$ consists of pairwise-disjoint vertex-sets $(U_1, \dotsc, U_h)$ such that, $G[U_i]$ is connected for each $i$ and 
$G[U_i\cup U_j]$ is connected for all $i,j$. Clearly $K_h$ is a minor of a graph $G$ if and only if $G$ contains a $K_h$-model. See \citep{Diestel5} for graph-theoretic definitions not given here. The \defn{Euler genus} of a surface with $h$ handles and $c$ cross-caps is $2h + c$. The \defn{Euler genus} of a graph $G$ is the minimum integer $g\geq 0$ such that $G$ embeds in a surface of Euler genus $g$; see \cite{MoharThom} for more about graph embeddings in surfaces.} product structure theory describes complicated graphs as subgraphs of strong products\footnote{The \defn{strong product} of graphs~$A$ and~$B$, denoted by~${A \boxtimes B}$, is the graph with vertex-set~${V(A) \times V(B)}$, where distinct vertices ${(v,x),(w,y) \in V(A) \times V(B)}$ are adjacent if
	${v=w}$ and ${xy \in E(B)}$, or
	${x=y}$ and ${vw \in E(A)}$, or
	${vw \in E(A)}$ and~${xy \in E(B)}$.
} of simpler building blocks. Examples of graphs classes that can be described this way include planar graphs \citep{DJMMUW20,UWY22}, graphs of bounded Euler genus~\citep{DJMMUW20,DHHW22}, graphs excluding a fixed minor~\citep{DJMMUW20,ISW,UTW}, various non-minor-closed classes~\citep{DMW23,HW21b,BDHK22,DHSW}, and graphs of bounded tree-width~\citep{UTW,DO95}. These results have been the key to solving several long-standing open problems 
about queue layouts~\citep{DJMMUW20}, nonrepetitive colourings~\citep{DEJWW20}, centred colourings~\citep{DFMS21}, adjacency labelling~\cite{EJM,DEJGMM21}, twin-width~\cite{BKW,KPS23,JP22}, vertex ranking~\citep{BDJM}, and box dimension~\citep{DGLTU22}. This paper studies the product structure of graph classes with strongly sublinear separators, which is a more general setting than all of the above classes. 

\subsection{Background}

A \defn{balanced separator} in an $n$-vertex graph $G$ is a set $S\subseteq V(G)$ such that each component of $G-S$ has at most $\frac{n}{2}$ vertices. The \defn{separation-number $\sep(G)$} of a graph $G$ is the minimum size of a balanced separator in $G$.  

For a graph parameter $f$ and graph class $\GG$, let $f(\GG)$ be the function $n\mapsto \max\{f(G):G\in\GG,\,|V(G)|=n\}$. 
We say $\GG$ has \defn{strongly sublinear} $f$ if $f(\GG)\in O(n^{1-\epsilon})$ for some fixed $\epsilon>0$ (where $n$ is always the number of vertices).

Many classes of graphs have strongly sublinear separation-number. For example, \citet{LT79} proved that planar graphs have separation-number $O(n^{1/2})$. More generally, \citet{Djidjev81} and \citet{GHT-JAlg84} independently proved that graphs embeddable on any fixed surface have separation-number $O(n^{1/2})$. More generally still, \citet{AST90} proved that any proper minor-closed class has separation-number $O(n^{1/2})$. Many non-minor-closed classes also have strongly sublinear separation-number. For example, \citet{GB07} proved that graphs that have a drawing in the plane with a bounded number of crossings per edge have separation-number $O(n^{1/2})$. And \citet{MTTV97} proved that touching graphs of $d$-dimensional spheres have separation-number $O(n^{1-1/d})$ (amongst more general results). 

The following characterisation of graph classes with strongly sublinear separation-number in terms of tree-width\footnote{For a tree~$T$, a \defn{$T$-decomposition} of a graph~$G$ is a collection~${\WW = (W_x \colon x \in V(T))}$ of subsets of~${V(G)}$ indexed by the nodes of~$T$ such that
(i) for every edge~${vw \in E(G)}$, there exists a node~${x \in V(T)}$ with~${v,w \in W_x}$; and 
(ii) for every vertex~${v \in V(G)}$, the set~${\{ x \in V(T) \colon v \in W_x \}}$ induces a (connected) subtree of~$T$. 
Each set~$W_x$ in~$\WW$ is called a \defn{bag}. 
The \defn{width} of~$\WW$ is~${\max\{ \abs{W_x} \colon x \in V(T) \}-1}$. 
A \defn{tree-decomposition} is a $T$-decomposition for any tree~$T$. 
The \defn{tree-width~$\tw(G)$} of a graph~$G$ is the minimum width of a tree-decomposition of~$G$. Tree-width is the standard measure of how similar a graph is to a tree. Indeed, a connected graph has tree-width 1 if and only if it is a tree. 
A \defn{path-decomposition} is a $P$-decomposition for any path~$P$. 
The \defn{path-width $\pw(G)$} of a graph~$G$ is the minimum width of a path-decomposition of $G$. }, path-width and tree-depth\footnote{A forest is \defn{rooted} if each component has a nominated \defn{root} vertex (which defines the ancestor relation). The \defn{vertex-height} of a rooted forest is the maximum number of vertices in a root--leaf path. The \defn{closure} of a rooted forest $F$ is the graph $G$ with $V(G)\coloneqq V(F)$  with $vw\in E(G)$ if and only if $v$ is an ancestor of $w$ or vice versa. The \defn{tree-depth $\td(G)$} of a graph $G$ is the minimum vertex-height of a rooted forest $T$ such that $G$ is a subgraph of the closure of $T$.}  is folklore. 

\begin{thm}
\label{Folklore}
For fixed $\epsilon\in(0,1)$, the following are equivalent for any hereditary graph class $\mathcal{G}$:
\begin{enumerate}[\textnormal{(\alph{*})}]
\item\label{SS} $\GG$ has separation-number $\sep(\GG)\in O(n^{1-\epsilon})$,
\item\label{SStw} $\GG$ has tree-width $\tw(\GG)\in O(n^{1-\epsilon})$,
\item\label{SSpw} $\GG$ has path-width $\pw(\GG)\in O(n^{1-\epsilon})$,
\item\label{SStd} $\GG$ has tree-depth $\td(\GG)\in O(n^{1-\epsilon})$.
\end{enumerate}
\end{thm}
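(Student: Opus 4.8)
The plan is to prove the cycle of implications \ref{SStd} $\Rightarrow$ \ref{SSpw} $\Rightarrow$ \ref{SStw} $\Rightarrow$ \ref{SS} $\Rightarrow$ \ref{SStd}. The first three are immediate consequences of pointwise inequalities between the four parameters and do not even use that $\GG$ is hereditary; all of the content lies in the last implication.

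For \ref{SStd} $\Rightarrow$ \ref{SSpw} $\Rightarrow$ \ref{SStw}, it is well known that $\tw(G) \le \pw(G) \le \td(G)-1$ for every graph $G$. Applying these inequalities to each $n$-vertex graph in $\GG$ and taking the maximum gives $\tw(\GG)(n) \le \pw(\GG)(n) \le \td(\GG)(n)$, so if any one of these functions is $O(n^{1-\epsilon})$ then so is the one to its left. For \ref{SStw} $\Rightarrow$ \ref{SS}, I would use the classical ``central bag'' argument: every $n$-vertex graph $G$ has, in some tree-decomposition of width $\tw(G)$, a bag $W_x$ such that every component of $G - W_x$ has at most $\frac{n}{2}$ vertices; hence $\sep(G) \le \tw(G)+1$, and so $\sep(\GG)(n) \le \tw(\GG)(n)+1$.

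The substantive implication is \ref{SS} $\Rightarrow$ \ref{SStd}. Assume $\sep(\GG)(n) \le c\,n^{1-\epsilon}$ for some constant $c$ and all $n$. The key is a divide-and-conquer recursion for tree-depth. Given $G \in \GG$ with $n := |V(G)| \ge 2$, take a balanced separator $S$ of $G$ with $|S| \le c\,n^{1-\epsilon}$; each component $G_1,\dots,G_k$ of $G-S$ is an induced subgraph of $G$, hence lies in $\GG$ (\emph{this is the only place where heredity is used}), and has at most $\frac{n}{2}$ vertices. Since $\td$ of a disjoint union equals the maximum over the parts, and $\td(H) \le |S| + \td(H-S)$ for any graph $H$ and any $S \subseteq V(H)$ (place the vertices of $S$ as a chain of common ancestors above an optimal tree-depth decomposition of $H-S$), we obtain
\[
  \td(G) \;\le\; |S| + \max_i \td(G_i) \;\le\; c\,n^{1-\epsilon} + \td(\GG)\big(\lfloor n/2\rfloor\big).
\]
Writing $t(n)$ for the maximum tree-depth of a graph of $\GG$ on at most $n$ vertices (so that $t$ is nondecreasing and $t(m) = O(1)$ for bounded $m$), this yields $t(n) \le c\,n^{1-\epsilon} + t(\lfloor n/2\rfloor)$; unrolling the recursion gives
\[
  t(n) \;\le\; c\,n^{1-\epsilon}\sum_{i \ge 0} 2^{-i(1-\epsilon)} + O(1) \;=\; \frac{c}{\,1-2^{-(1-\epsilon)}\,}\,n^{1-\epsilon} + O(1) \;=\; O(n^{1-\epsilon}),
\]
since $2^{-(1-\epsilon)} < 1$. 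Hence $\td(\GG)(n) \in O(n^{1-\epsilon})$, which closes the cycle.

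The only genuine obstacle is making this last recursion close up cleanly, and two points deserve care. First, one must recurse into \emph{every} component of $G-S$, not merely the largest one; this is exactly why $\GG$ is required to be hereditary, so that each component is again a graph of $\GG$ to which the separator bound applies. Second, the geometric series $\sum_{i \ge 0} 2^{-i(1-\epsilon)}$ converges precisely because $\epsilon > 0$; this is the one place where it matters that the separators are \emph{strongly} sublinear rather than merely sublinear.
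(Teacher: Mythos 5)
Your proof is correct and follows essentially the same route as the paper: the chain $\td \Rightarrow \pw \Rightarrow \tw$ via the pointwise inequalities, $\tw \Rightarrow \sep$ via the Robertson--Seymour central-bag bound, and $\sep \Rightarrow \td$ via a balanced-separator divide-and-conquer. The only difference is that the paper declares the last implication folklore and cites references, whereas you spell out the recursion $t(n)\le c\,n^{1-\epsilon}+t(\lfloor n/2\rfloor)$ and the resulting geometric series explicitly, which is exactly the argument the paper has in mind.
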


\begin{proof}
It follows from the definitions that for every graph $G$, 
$$\tw(G)\leq \pw(G)\leq \td(G)-1.$$ 
Thus \cref{SStd} $\Rightarrow$ \cref{SSpw} $\Rightarrow$ \cref{SStw}. \citet[(2.6)]{RS-II} showed that for every graph $G$,
\begin{equation}
    \label{septw}
    \sep(G)\leq\tw(G)+1.
\end{equation}    
Thus \cref{SStw} $\Rightarrow$ \cref{SS}. It is folklore that \cref{SS} $\Rightarrow$ \cref{SStd}; see \citep{Bodlaender98,AFHJW21} for proofs that \cref{SS} $\Rightarrow$ \cref{SSpw} which are easily adapted to show that \cref{SS} $\Rightarrow$ \cref{SStd}. (Note that \citet{DN19} proved a stronger relationship between balanced separators and tree-width that holds without the `strongly sublinear' assumption.)
\end{proof}

We only consider products of the form $H \boxtimes K_m$, which is the `complete blow-up' of the graph $H$, obtained by replacing each vertex of $H$ by a copy of the complete graph $K_m$ and each edge of $H$ by a copy of the complete bipartite graph $K_{m,m}$. Such products can be characterised via the following definition. For graphs $H$ and $G$, an \defn{$H$-partition} of $G$ is a partition  $\PP=(V_x:x\in V(H))$ of $V(G)$ indexed by the vertices of $H$ such that for each edge $vw\in E(G)$, if $v\in V_x$ and $w\in V_y$ then $xy\in E(H)$ or $x=y$. 
The \defn{width} of such an $H$-partition is 
$\max\{|V_x|:x\in V(H)\}$. The following observation follows immediately from the definitions. 

\begin{obs}
\label{ProductPartition}
For any graph $H$ and $m \in\mathbb{N}$, a graph $G$ is contained in $H \boxtimes K_m$ if and only if $G$ has an $H$-partition of width $m$. 
\end{obs}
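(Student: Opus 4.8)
The plan is to unwind both definitions and match them directly. The key preliminary step --- and the only one needing any care --- is to simplify adjacency in $H \boxtimes K_m$: since $K_m$ is complete, two of its vertices are adjacent exactly when they are distinct, and substituting this into the three-case definition of the strong product shows that distinct vertices $(v,i)$ and $(w,j)$ of $H \boxtimes K_m$, with $v,w \in V(H)$ and $i,j \in V(K_m)$, are adjacent if and only if $v = w$ or $vw \in E(H)$.

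For the forward direction, let $\phi \colon V(G) \to V(H) \times V(K_m)$ be an embedding witnessing $G \subseteq H \boxtimes K_m$, and write $\phi(v) = (\alpha(v), \beta(v))$. Put $V_x := \{v \in V(G) : \alpha(v) = x\}$ for each $x \in V(H)$; this is a partition of $V(G)$ indexed by $V(H)$, and each $V_x$ has at most $m$ vertices because $\phi$ is injective on it and $\{x\}\times V(K_m)$ has size $m$. For $vw \in E(G)$ with $v \in V_x$ and $w \in V_y$, adjacency of $\phi(v)$ and $\phi(w)$ together with the simplified relation gives $x = y$ or $xy \in E(H)$, so $(V_x : x \in V(H))$ is an $H$-partition of width at most $m$ (pad with empty parts, or extend the injections below, to obtain width exactly $m$ if one insists).

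For the converse, given an $H$-partition $(V_x : x \in V(H))$ of $G$ with each $\abs{V_x} \le m$, fix an injection $\beta_x \colon V_x \to V(K_m)$ for each $x$ and define $\phi(v) := (x, \beta_x(v))$, where $x$ is the part containing $v$. Then $\phi$ is injective, since vertices in a common part receive distinct second coordinates and vertices in different parts receive distinct first coordinates. For $vw \in E(G)$ with $v \in V_x$ and $w \in V_y$, the $H$-partition property gives $x = y$ or $xy \in E(H)$; in the first case $v \ne w$ forces $\beta_x(v) \ne \beta_x(w)$, and in either case the simplified adjacency relation shows $\phi(v)\phi(w) \in E(H \boxtimes K_m)$. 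Hence $\phi$ embeds $G$ into $H \boxtimes K_m$. I expect no genuine obstacle here; the one place to be deliberate is the collapse of the strong-product definition in the first step, after which both implications are immediate.
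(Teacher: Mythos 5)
Your proof is correct and matches the paper's treatment: the paper states this observation ``follows immediately from the definitions'' and gives no further argument, and what you have written is exactly the straightforward unwinding of those definitions (collapse the strong-product adjacency using completeness of $K_m$, then read the embedding as a partition and vice versa). There is no meaningful difference in approach.
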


If $\PP$ is an $H$-partition of a graph $G$, where $H$ is a tree or a star, then $\PP$ is respectively called a \defn{tree-} or a \defn{star-partition}.\footnote{Tree-partitions were independently introduced by \mycitet{Seese85} and \citet{Halin91}, and have since been widely investigated \citep{Bodlaender-DMTCS99,BodEng-JAlg97,DO95,DO96,Edenbrandt86,Wood06,Wood09,UTW,BGJ22,DW22}. Tree-partition-width has also been called \defn{strong tree-width}~\citep{BodEng-JAlg97, Seese85}. Applications of tree-partitions include 
graph drawing~\citep{CDMW08,GLM05,DMW05,DSW07,WT07}, nonrepetitive graph colouring~\citep{BW08}, 
clustered graph colouring~\citep{ADOV03}, 
monadic second-order logic~\citep{KuskeLohrey05}, 
size Ramsey numbers~\citep{DKCPS}, 
network emulations~\citep{Bodlaender-IPL88, Bodlaender-IC90, BvL-IC86, FF82}, 
statistical learning theory~\citep{ZA22}, and 
the edge-{E}rd{\H{o}}s-{P}{\'o}sa property~\citep {RT17,GKRT16,CRST18}. 
Planar-partitions and other more general structures have also been studied~\citep{DK05,RS98,WT07,DOSV00,DOSV98}.}
The \defn{tree-partition-width $\tpw(G)$} is the minimum width of a tree-partition of $G$, and the \defn{star-partition-width $\spw(G)$} is the minimum width of a star-partition of $G$. By \cref{ProductPartition}, $\tpw(G)\leq k$ if and only if $G$ is contained in $T\boxtimes K_k$ for some tree $T$, and $\spw(G)\leq k$ if and only if $G$ is contained in $S\boxtimes K_k$ for some star $S$. 

\subsection{Bounded Width Partitions}

Our first result (proved in \cref{StarPartitions}) characterises graph classes with strongly sublinear separation-number in terms of star- and tree-partition-width. 

\begin{restatable}{thm}{Primary}
\label{Primary}
The following are equivalent for any hereditary graph class $\GG$:
\begin{enumerate}[\textnormal{(\roman{*})}]
\item \label{SSS} $\GG$ has strongly sublinear separation-number, 
\item\label{SStpw} $\GG$ has strongly sublinear tree-partition-width,
\item\label{SSspw} $\GG$ has strongly sublinear star-partition-width.
\end{enumerate}
\end{restatable}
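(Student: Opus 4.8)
The plan is to prove the cycle of implications \cref{SSspw}~$\Rightarrow$~\cref{SStpw}~$\Rightarrow$~\cref{SSS}~$\Rightarrow$~\cref{SSspw}; the first two are routine consequences of standard relations between the parameters, and essentially all the content is in \cref{SSS}~$\Rightarrow$~\cref{SSspw}, which I would establish by a recursive balanced-separator decomposition.

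For \cref{SSspw}~$\Rightarrow$~\cref{SStpw}: a star is a tree, so every star-partition is a tree-partition, whence $\tpw(G)\leq\spw(G)$ for every graph $G$. For \cref{SStpw}~$\Rightarrow$~\cref{SSS}: given a tree-partition $(V_x:x\in V(T))$ of $G$ of width $k$, root $T$ arbitrarily and set $W_r:=V_r$ and $W_x:=V_x\cup V_{x'}$ for each non-root node $x$ with parent $x'$; one checks directly that $(W_x:x\in V(T))$ is a tree-decomposition of $G$ of width at most $2k-1$, so $\tw(G)\leq 2\tpw(G)-1$, and with \eqref{septw} this gives $\sep(G)\leq 2\tpw(G)$. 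Hence strongly sublinear $\spw(\GG)$ implies strongly sublinear $\tpw(\GG)$, which implies strongly sublinear $\sep(\GG)$ (one could instead invoke \cref{Folklore}).

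For the main implication, assume $\sep(\GG)(m)\leq cm^{1-\epsilon}$ for all $m\geq 1$, fix $\delta\in(0,\epsilon/2)$, let $G\in\GG$ with $|V(G)|=n$, and put $t:=\lceil n^{1-\epsilon/2}\rceil$. I would construct a set $S\subseteq V(G)$ with every component of $G-S$ of order at most $t$ by the following process. Maintain a collection of pairwise-disjoint induced subgraphs (``pieces''), initially $\{G\}$; in each round, for every current piece $P$ with $|V(P)|>t$ pick a balanced separator $X_P$ of $P$ (so $|X_P|\leq c\,|V(P)|^{1-\epsilon}$, since $P\in\GG$ as $\GG$ is hereditary), add $X_P$ to $S$, and replace $P$ by the components of $P-X_P$. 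Since a balanced separator of $P$ leaves components of order at most $|V(P)|/2$, after $j$ rounds every piece has order at most $\max\{t,n/2^j\}$, so the process terminates within $\lceil\log_2 n\rceil$ rounds with all pieces of order at most $t$. In any one round, the processed pieces $P_1,\dots,P_r$ satisfy $\sum_i|V(P_i)|\leq n$ and $|V(P_i)|>t$, so they contribute at most $c\sum_i|V(P_i)|^{1-\epsilon}\leq c\,t^{-\epsilon}\sum_i|V(P_i)|\leq c\,t^{-\epsilon}n$ vertices to $S$. Hence $|S|\leq c\,n\,t^{-\epsilon}\lceil\log_2 n\rceil=O(n^{1-\epsilon+\epsilon^2/2}\log n)=O(n^{1-\delta})$, using $1-\epsilon+\epsilon^2/2<1-\epsilon/2<1-\delta$ for $\epsilon\in(0,1)$. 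Now the partition of $V(G)$ into $S$ together with one part for each component of $G-S$ is a star-partition of $G$ (each edge lies inside $S$, inside one component, or joins $S$ to a component) of width $\max\{|S|,t\}=O(n^{1-\delta})$; by \cref{ProductPartition} this means $G$ is contained in $K_{1,r}\boxtimes K_{O(n^{1-\delta})}$. Therefore $\spw(\GG)(n)=O(n^{1-\delta})$, which is strongly sublinear.

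The main obstacle is bounding $|S|$: a naive accounting that charges each piece's separator separately fails, because a round can create $\Theta(n)$ tiny pieces with $\sum_P|V(P)|^{1-\epsilon}=\Theta(n)$. The fix is to separate only pieces whose order still exceeds the target $t$, which caps the number of processed pieces per round by $n/t$ and the per-round cost by $O(n\,t^{-\epsilon})$; over $O(\log n)$ rounds this totals $O(n\,t^{-\epsilon}\log n)$, and taking $t=\Theta(n^{1-\epsilon/2})$ balances this against the component-order bound $t$. Since the resulting exponent $1-\epsilon+\epsilon^2/2$ is strictly below $1-\epsilon/2$, the logarithmic factor (and the ceilings) are absorbed; a sharper bookkeeping would remove the $\log n$, but it is not needed here.
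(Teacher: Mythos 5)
Your proof is correct and follows essentially the same route as the paper: the easy implications via $\tpw\leq\spw$ and $\tw\leq 2\tpw-1$, and the substantive implication \cref{SSS}~$\Rightarrow$~\cref{SSspw} via a recursive balanced-separator decomposition that terminates when all components are below a threshold $t$. The only differences are cosmetic: you re-derive in-line the decomposition lemma the paper quotes as \cref{LT} (from \citep{EM94}), and you pick the threshold $t=\lceil n^{1-\epsilon/2}\rceil$ with a round-by-round accounting that costs an extra $\log n$ factor, whereas the paper balances at $t=n^{1/(1+\epsilon)}$ and obtains the clean bound $\spw(G)=O(n^{1/(1+\epsilon)})$ with no logarithm; both choices suffice for the strongly-sublinear conclusion.
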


We emphasise that $\epsilon$ is fixed in \cref{Folklore}, but not in \cref{Primary}. Indeed, there can be a significant difference between the exponents in the bounds on the separator-number and the tree- or star-partition-width in \cref{Primary}. Call this the `\defn{exponent-gap}'. For example, while planar graphs have separation-number
$\Theta(n^{1/2})$, in \cref{StarPartitions} we show that $\Theta(n^{2/3})$ is a tight bound on the star-partition-width and the tree-partition-width of planar graphs. In this case, the exponent-gap is $\frac23 - \frac12 = \frac16$.

This motivates the question: can the exponent-gap be reduced by considering products $H\boxtimes K_m$ for graphs $H$ that are more complicated than stars or trees, but still with bounded tree-width or bounded tree-depth? In particular, can the exponent-gap be 0? 

\citet{ISW} achieved an exponent-gap of 0 for minor-closed graph classes where $H$ has bounded tree-width:

\begin{thm}[\citep{ISW}]
\label{ISW}\hspace*{5mm}
\begin{enumerate}[\textnormal{(\alph{*})}]
\item Every $n$-vertex planar graph is contained in $H\boxtimes K_m$, for some graph $H$ with $\tw(H)\leq 3$, where $m \leq \sqrt{8n}$.
\item Every $n$-vertex graph of Euler genus $g$ is contained in  $H \boxtimes K_m$, for some graph $H$ with  $\tw(H) \leq 3$, where $m \leq 4\sqrt{(g+1)n}$.
\item Every $n$-vertex $K_{s,t}$-minor-free graph is contained in $H\boxtimes K_m$ for some graph $H$ with  $\tw(H)\leq s$, where  $m\leq \sqrt{(s-1)(t-1)n}$.
\item Every $n$-vertex $K_t$-minor-free graph is contained in  $H\boxtimes K_m$ for some graph $H$ with $\tw(H)\leq t-2$, where  $m\leq 2\sqrt{(t-3)n}$.
\end{enumerate}
\end{thm}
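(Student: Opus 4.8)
\emph{Proof proposal.} The plan is to prove the four parts by one method, reducing each to the bounded-radius case of the relevant product structure theorem and then reassembling the pieces. I describe it for part~(a). The base case is the observation that the planar graph product structure theorem of \citet{DJMMUW20} already has the required shape when the radius is small: if $G$ is connected with a BFS spanning tree of radius $r$, then $G\subseteq H_0\boxtimes P\boxtimes K_3$ for some $H_0$ with $\tw(H_0)\le 3$ and some path $P$ indexed by the BFS layers, so $\abs{V(P)}\le r+1$; since $P\boxtimes K_3$ has at most $3(r+1)$ vertices, this gives $G\subseteq H_0\boxtimes K_{3(r+1)}$ with $\tw(H_0)\le 3$. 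Thus it suffices to partition $G$ into pieces of BFS radius $O(\sqrt n)$ and then glue their pattern graphs together without increasing the treewidth.

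For the reduction I would follow the argument of \citet{LT79}. Fix a BFS layering $(L_0,L_1,\dots)$ of a connected $G$ (disconnected $G$ is handled by taking a disjoint union of pattern graphs), and call a layer \emph{thin} if it has at most $m_0$ vertices, for a threshold $m_0$ of order $\sqrt n$. Since the layers strictly between two consecutive thin layers have total size at most $n$, there are fewer than $m_0$ of them; the thin layers therefore split the remaining layers into \emph{blocks}, each a maximal run of fewer than $m_0$ consecutive layers. Rooting each component of a block at its topmost occupied layer gives it a (multi-source) BFS layering of radius less than $m_0$, so the bounded-radius case supplies each block with a pattern graph of treewidth at most $3$ and width $O(\sqrt n)$; each thin layer, of size at most $m_0$, is put into a single part. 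Consecutive blocks are separated in $G$ by a thin layer, hence lie at distance at least two in the layering, so every $G$-edge between distinct blocks runs through the intervening thin layer: up to the thin-layer parts, the global pattern graph is a disjoint union of the per-block pattern graphs arranged in a path. Balancing $m_0$ against the block width, and tracking the constant in the product structure theorem, should yield $m\le\sqrt{8n}$.

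Parts~(b)--(d) would follow the same template, with the bounded Euler genus product structure theorem of \citet{DJMMUW20} (see also \citep{DHHW}) for part~(b), and the corresponding $K_{s,t}$- and $K_t$-minor-free structure results (together with the separator bound of \citet{AST90}) for parts~(c) and~(d); for the latter two the sharp treewidth bounds $\tw(H)\le s$ and $\tw(H)\le t-2$ do not drop out of the general minor-closed product structure theorem, and would need a dedicated analysis of the bounded-radius case. The hard part, in every case, is the reassembly. A thin layer can be adjacent in $G$ to many parts of each neighbouring block, so gluing naively forces each block's pattern graph to absorb the thin-layer vertices as near-universal vertices, and this gives only a larger constant (around $5$) in place of~$3$. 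I expect that obtaining the exact treewidth bounds requires building the banding and the per-block product structures \emph{compatibly} — choosing block boundaries and product structures so that each thin layer meets only a bounded-treewidth portion of its neighbouring blocks — and that this compatibility argument, together with the bookkeeping needed for the clean multiplicative constants, is the technical heart of the proof.
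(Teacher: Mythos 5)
\cref{ISW} is stated in the paper as a citation of \citep{ISW} and is not proved there, so there is no paper proof to compare against. What the paper does contain is a weaker cousin of the result: \cref{RowTreewidthGen}, proved via the weighted-separator \cref{Separable,PathKcSeparable}, gives every $n$-vertex planar $G$ contained in $L\boxtimes K_{\sqrt{3n}}$ with $\tw(L)\leq 4$. There one starts from $G\subseteq H_0\boxtimes P\boxtimes K_3$, weights each vertex of $P$ by the size of its column, cuts the path by a low-weight separator, and collects the cut columns into one dominant part; that dominant vertex is exactly where the $+1$ in the treewidth comes from, and it is the same obstruction you encounter.

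Your banding proposal is a reasonable route to a result of this weaker shape, but it does not reach the theorem, and the gap is one you correctly diagnose yourself. A thin layer can be adjacent to every part of the per-block pattern graph on either side of it, so each block acquires (up to) two dominant vertices and you land at roughly treewidth $5$, not $3$. Your remark that one would need to ``build the banding and the per-block product structures compatibly'' is a statement of the problem, not a solution: that compatibility is the entire content of \cref{ISW}, and the proposal supplies no mechanism for it (the actual argument of \citep{ISW} does not proceed by patching this gluing, but by constructing the partition so that the separator vertices are folded into existing bags rather than collected into a new dominant part). Two further points you wave at but do not substantiate: the constant $\sqrt{8n}$ does not obviously fall out of balancing the thin-layer threshold against the block radius (a direct balance gives a different constant, and with the wrong treewidth); and for parts (c) and (d) the sharp bounds $\tw(H)\leq s$, $\tw(H)\leq t-2$ and the factors $\sqrt{(s-1)(t-1)}$, $2\sqrt{t-3}$ do not come from the \citet{AST90} separator plus a generic bounded-radius product structure theorem — the ``dedicated analysis'' you defer to is precisely what is missing from the plan.
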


In all these results the dependence on $n$ is best possible because $\tw(H\boxtimes K_m)\leq(\tw(H)+1)m-1$ and the $n^{1/2}\times n^{1/2}$ grid has tree-width $n^{1/2}$. 

Can similar results be obtained for an arbitrary graph class with strongly sublinear separation-number? The short answer is `almost', as expressed in the next theorem which shows that the exponent-gap can be made arbitrarily small. 

\begin{restatable}{thm}{Almost}
\label{Almost}
For $\delta,\epsilon\in\REALS$ with $0<\delta<\epsilon<1$ there exists $\tdb\in\NN$ with the following property. Let $\GG$ be any hereditary graph class with  $\sep(\GG)\leq cn^{1-\epsilon}$ for some constant $c>0$. Then every $n$-vertex graph $G\in\GG$ is contained in $H \boxtimes K_m$ for some graph $H$ with $\td(H)\leq\tdb$, where $m \leq \frac{c\, 2^\epsilon}{2^\epsilon-1} \, n^{1-\epsilon+\delta}$.
\end{restatable}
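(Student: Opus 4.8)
The plan is to build the required partition by a two-level recursion. An \emph{outer} recursion passes from a graph to induced subgraphs on polynomially fewer vertices; inside each outer step, an \emph{inner} recursion repeatedly deletes balanced separators until every remaining piece has polynomial size. Set $\alpha:=1-\delta/\epsilon$; the hypothesis $0<\delta<\epsilon<1$ is used exactly to ensure $\alpha\in(0,1)$. Put $m:=\frac{c\,2^\epsilon}{2^\epsilon-1}\,n^{1-\epsilon+\delta}$. By induction on $n':=\abs{V(G')}$ I will show that every induced subgraph $G'$ of $G$ (which lies in $\GG$ since $\GG$ is hereditary) satisfies $G'\subseteq H'\boxtimes K_m$ for some $H'$ whose tree-depth is at most one plus the number of outer levels unfolded from $n'$; applying this with $G'=G$ and bounding that number yields the theorem.

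If $n'\le m$, take the one-bag partition with $H'=K_1$. Otherwise let $\theta:=\ceil{n'^{\alpha}}$ and run the inner recursion on $G'$: take a minimum balanced separator of $G'$; then a minimum balanced separator of each resulting component having more than $\theta$ vertices; and repeat, stopping a branch once its component has at most $\theta$ vertices. This produces a rooted recursion tree whose internal nodes are the separated components (each of size exceeding $\theta$), whose leaves are the final components --- the \emph{pieces} --- each of size at most $\theta$, and in which the children of a node are the components left after deleting its separator, so each child has at most half the vertices of its parent. Let $S'$ be the union of all separators used. Because the separators form a laminar family, one checks that $S'$ is disjoint from every piece, that $V(G')$ is the disjoint union of $S'$ and the pieces, and that no edge of $G'$ joins two distinct pieces.

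To bound $\abs{S'}$, sum over the internal nodes $v$: $\sum_v\abs{v}^{1-\epsilon}=\sum_{x\in V(G')}\sum_{v\ni x}\abs{v}^{-\epsilon}$. For a fixed $x$, the internal nodes containing $x$ form a chain $v_1\subsetneq v_2\subsetneq\cdots$ with $\abs{v_1}>\theta$ and $\abs{v_i}\ge 2\abs{v_{i-1}}$, hence $\abs{v_i}>2^{i-1}\theta$ and $\sum_i\abs{v_i}^{-\epsilon}<\theta^{-\epsilon}\sum_{j\ge 0}2^{-j\epsilon}=\tfrac{2^\epsilon}{2^\epsilon-1}\theta^{-\epsilon}$. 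Using $\sep(G'[v])\le c\abs{v}^{1-\epsilon}$ for each internal $v$, it follows that
\[
\abs{S'}\ \le\ c\!\!\sum_{v\text{ internal}}\!\!\abs{v}^{1-\epsilon}\ \le\ \frac{c\,2^\epsilon}{2^\epsilon-1}\;n'\theta^{-\epsilon}\ \le\ \frac{c\,2^\epsilon}{2^\epsilon-1}\;n'^{1-\epsilon+\delta}\ \le\ m,
\]
where the penultimate inequality uses $\theta\ge n'^{\alpha}$ and the last uses $n'\le n$. So $S'$ fits into one bag of size at most $m$. By the induction hypothesis each piece $P$ (which is smaller than $G'$, since $\theta<n'$ once $n'$ exceeds a fixed size, the remaining small sizes being handled below) satisfies $P\subseteq H_P\boxtimes K_m$; form $H'$ by adding a root $r$ with bag $V_r:=S'$ above the disjoint union of the rooted forests witnessing the $H_P$. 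As $r$ is an ancestor of every vertex and distinct pieces are pairwise non-adjacent in $G'$, this gives $G'\subseteq H'\boxtimes K_m$ with $\td(H')\le 1+\max_P\td(H_P)$, completing the induction.

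It remains to bound the number of outer levels started from $n$. Along any branch the vertex count passes from $n$ to at most $\ceil{n^{\alpha}}$, then to at most $\ceil{\ceil{n^{\alpha}}^{\alpha}}$, and so on; since $\alpha\in(0,1)$, after $j$ steps it is at most a constant (depending only on $\epsilon,\delta$) times $n^{\alpha^{j}}$, and the recursion halts once this is at most $m$. As $m\ge n^{1-\epsilon+\delta}$ --- here one may assume $c\ge\tfrac{2^\epsilon-1}{2^\epsilon}$, since small $c$ and small $n$ need only routine adjustments that increase $d$ by a constant --- this happens after at most $\ceil{\ln(1/(1-\epsilon+\delta))/\ln(1/\alpha)}$ steps, a constant depending only on $\epsilon,\delta$; taking $d$ to be one more than that bounds $\td(H)$ as required. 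The main point, and the only step needing genuine care, is to run the inner recursion down to pieces of \emph{polynomial} size $n'^{\alpha}$: this is small enough to force the outer recursion to terminate in $O_{\epsilon,\delta}(1)$ rounds, yet the geometric charging over internal ancestors still yields $\abs{S'}\le\frac{c\,2^\epsilon}{2^\epsilon-1}n'^{1-\epsilon+\delta}$ --- and both estimates must be met by the single threshold $\theta=\ceil{n'^{\alpha}}$.
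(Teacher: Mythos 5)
Your proof is correct and follows essentially the same route as the paper. The paper abstracts your inner recursion into Lemma~\ref{LT} (strip balanced separators until every component has at most $n^\alpha$ vertices, with the same geometric charging giving the $\tfrac{2^\epsilon}{2^\epsilon-1}$ constant), and your outer recursion corresponds to the inductive proof of Lemma~\ref{AlmostProof}; the only real difference is that you fix a single $\alpha=1-\delta/\epsilon$ at every level and argue termination separately, whereas the paper uses the level-dependent $\alpha_d=(1-\epsilon^{d-1})/(1-\epsilon^d)$ so that the width bound $\gamma n^{(1-\epsilon)/(1-\epsilon^d)}$ holds exactly for each $d$ and termination is built into the induction. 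Both choices give $d=O_{\epsilon,\delta}(1)$; yours is a mild reparametrisation, at the cost of some ceiling/small-$n$ bookkeeping that you flag but do not fully write out (a caveat also implicit in the paper's base case $d=1$, which needs $\tfrac{c\,2^\epsilon}{2^\epsilon-1}\ge 1$).
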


In \cref{Almost} the graph $H$ has bounded tree-depth. This setting generalises bounded star-partition-width since a graph has tree-depth 2 if and only if it is a star (plus isolated vertices). 

\cref{Almost} is proved in \cref{BoundedTreedepth}, where we also prove a result (\cref{AlmostAlmost}) in which the exponent-gap tends to 0 as $n\to\infty$, at the expense of allowing $\td(H)$ to increase (arbitrarily slowly) with $n$. Another result (\cref{AlmostAlmostAlmost}) has exponent gap 0 and $\td(H)\in O(\log\log n)$. 

In \cref{LowerBounds} we prove various lower bounds that show that many of the results in \cref{BoundedTreedepth} are best possible for multi-dimensional grid graphs. In particular, we show that the dependence on $\delta$ in \cref{Almost} is best possible (\cref{DimGridPartitionTreeDepthCor}), and that the above $\td(H)\in O(\log\log n)$ bound is best possible (\cref{DimAlmostAlmostAlmostTight}). 

It is open whether exponent gap 0 and $\tw(H)\in O(1)$ can be  achieved simultaneously.

\begin{open}
\label{mainquestion}
For any hereditary graph class $\GG$ with separation-number $\sep(\GG)\in O(n^{1-\epsilon})$, does there exist a constant $c=c(\GG)$ such that every $n$-vertex graph $G\in \GG$ is contained in $H\boxtimes K_m$, where $\tw(H)\leq c$ and $m\in O(n^{1-\epsilon})$.
\end{open}

\cref{ISW} solves \cref{mainquestion} for minor-closed classes with $\epsilon=\frac12$. It may even be true that $c$ in \cref{mainquestion} is a function of $\epsilon$ only, which was recently proved for minor-closed classes by \citet{DDEHJMMSW}, who also gave improved tree-width bounds for $K_{3,t}$-minor-free graphs, which includes planar and graphs with bounded Euler genus. 

\begin{thm}[\citep{DDEHJMMSW}]
\hspace*{5mm}
\begin{enumerate}[\textnormal{(\alph{*})}]
\item Every $n$-vertex planar graph is contained in  $H\boxtimes K_m$, where $\tw(H)\leq 2$ and $m\in O(n^{1/2})$.
\item Every $n$-vertex graph of Euler genus $g$ is contained in  $H\boxtimes K_m$, where $\tw(H)\leq 2$ and $m\in O(gn^{1/2})$.
\item Every $n$-vertex $K_{3,t}$-minor-free graph is contained in  $H\boxtimes K_m$, where $\tw(H)\leq 2$ and $m\in O(tn^{1/2})$.
\item Every $n$-vertex $K_t$-minor-free graph is contained in  $H\boxtimes K_m$, where $\tw(H)\leq 4$ and $m\in O_t(n^{1/2})$.
\end{enumerate}
\end{thm}

\cref{mainquestion} has an affirmative answer with $c=1$ if $\GG$ has bounded degree, since every graph $G$ with treewidth less than $k$ and maximum degree $\Delta$ is contained in $T\boxtimes K_{18k\Delta}$ for some tree $T$ (with maximum degree at most $6\Delta$) \citep{DW22}. 

\cref{ReMainQuestion} presents several natural graph classes where there is an affirmative answer to \cref{mainquestion}, and several natural graph classes where \cref{mainquestion} is unsolved. 

\subsection{Bounded Treewidth Graphs}

Our final contribution concerns the product structure of graphs of bounded treewidth. Given that $H$ has bounded tree-depth in \cref{Almost}, it is natural to consider the following question: Given $k,n,\tdb\in\NN$ what is the minimum value of $m=m(k,n,\tdb)$ such that every $n$-vertex graph with treewidth $k$ is a subgraph of $H\boxtimes K_m$ for some graph $H$ with $\td(H)\leq\tdb$. We prove that
$m(k,n,\tdb)\in \Theta(n^{1/\tdb})$ for fixed $k$ and $\tdb$. The following theorem provides the upper bound.

\begin{restatable}{thm}{TreewidthTDproduct}
\label{TreewidthTDproduct}
For all $\tdb\in\NN$ and $k,n\in\NN$, every $n$-vertex graph $G$ with $\tw(G)\leq k$ is contained in $H\boxtimes K_m$ for some graph $H$ with $\td(H)\leq\tdb$, where $m\leq (k+1)^{1-1/\tdb}n^{1/\tdb}$.
\end{restatable}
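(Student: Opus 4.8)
The plan is to prove the theorem by induction on $d$. First, two reductions. If $G$ is disconnected we handle each component separately and take the disjoint union of the resulting graphs $H$, whose tree-depth is the maximum of the component tree-depths; so assume $G$ is connected. If $d=1$ the claimed bound is just $m\le n$, achieved with $H=K_1$; and if $n\le k+1$ then $m=n$ satisfies $m\le(k+1)^{1-1/d}n^{1/d}$ (as $n^{1-1/d}\le(k+1)^{1-1/d}$), again with $H=K_1$. So assume $d\ge 2$ and $n\ge k+2$.

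The main ingredient is the folklore separator lemma: if $\tw(G)\le k$ and $t>0$, then $G$ has a set $X$ with $|X|\le(k+1)n/t$ such that every component of $G-X$ has fewer than $t$ vertices. I would prove it greedily on a tree-decomposition $(T,\WW)$ of $G$ of width at most $k$. Root $T$ and assign each vertex of $G$ to the highest bag containing it, so the numbers $a(x)$ of vertices assigned to the nodes $x\in V(T)$ sum to $n$. Repeatedly: while some component $T'$ of $T$ minus the already-selected nodes has a node whose subtree within $T'$ has assigned-total at least $t$, select a deepest such node $x$; then $T'-x$ consists of child-subtrees of $x$, each with assigned-total below $t$ (hence never revisited), together with one more connected piece. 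The subtrees rooted at the selected nodes (at the moment of selection) are pairwise disjoint, each with assigned-total at least $t$, so at most $n/t$ nodes get selected; call this set $Z$, and put $X:=\bigcup_{x\in Z}W_x$, so $|X|\le(k+1)|Z|\le(k+1)n/t$. When the process halts, every component of $T-Z$ has assigned-total below $t$; and for any component $C$ of $G-X$, the bags meeting $C$ form a connected subtree of $T$ avoiding $Z$, hence lie in a single component $T''$ of $T-Z$, and every vertex of $C$ is assigned within $T''$, so $|V(C)|<t$.

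For the inductive step, apply the lemma with $t:=(k+1)^{1/d}n^{(d-1)/d}$ (note $1\le t\le n$), obtaining $X$ with $|X|\le(k+1)n/t=(k+1)^{1-1/d}n^{1/d}=m$ and every component $C$ of $G-X$ having $|V(C)|<t$. Since $\tw(C)\le k$, induction gives $H_C$ with $\td(H_C)\le d-1$ such that $C$ is contained in $H_C\boxtimes K_{m_C}$ where
\[
m_C\le(k+1)^{1-1/(d-1)}|V(C)|^{1/(d-1)}<(k+1)^{1-1/(d-1)}t^{1/(d-1)}=(k+1)^{1-1/d}n^{1/d}=m,
\]
the middle equality being the only real computation (compare the exponents of $k+1$ and of $n$ on the two sides). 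Let $H$ be obtained from the disjoint union of the $H_C$ by adding one new vertex $r$ adjacent to every other vertex; placing $r$ as the common root above rooted trees witnessing $\td(H_C)\le d-1$ yields a rooted tree of vertex-height at most $d$ whose closure contains $H$, so $\td(H)\le d$. Taking $V_r:=X$ together with the parts of the $H_C$-partitions gives an $H$-partition of $G$ of width at most $\max\{|X|,\max_C m_C\}\le m$: edges within a component are captured inside the relevant $H_C$, there are no edges between distinct components of $G-X$, and every edge meeting $X$ is fine since $r$ is adjacent to all of $H$. By \cref{ProductPartition}, $G$ is contained in $H\boxtimes K_m$, completing the induction.

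The step needing the most care is the bookkeeping in the separator lemma, to get exactly $|X|\le(k+1)n/t$ with no ceiling or additive slack. That exactness is precisely what makes the two exponent-balanced quantities $|X|$ and $m_C$ telescope to $(k+1)^{1-1/d}n^{1/d}$; a weaker bound such as $(k+1)\lceil n/t\rceil$ would leave a lower-order error that the clean statement of \cref{TreewidthTDproduct} gives no room to absorb.
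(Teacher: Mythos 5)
Your proof is correct, and at the top level it mirrors the paper exactly: induction on $d$, with a separator lemma for graphs of treewidth $k$ that has no additive slack, so that the exponent-balanced quantities telescope to $(k+1)^{1-1/d}n^{1/d}$. Where you differ is in how the separator lemma is obtained. The paper first proves a normalisation lemma for tree-decompositions (every graph of tree-width $k$ has a tree-decomposition with $|V(T)|=|V(G)|-k$ and bags of size exactly $k+1$, together with the component-size property (d)), and then applies a separate tree separator lemma to $T$; you instead work directly on any width-$k$ tree-decomposition by assigning each vertex to its highest bag and greedily removing a deepest node whose remaining subtree has assigned-total at least $t$. Both routes give exactly $|X|\leq(k+1)n/t$ with components of size less than $t$; your greedy argument avoids the normalisation machinery and is more self-contained, while the paper's normalisation lemma is reused elsewhere (\cref{TreewidthSep,TreewidthSepNew}) and yields slightly sharper constants in those intermediate statements. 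One small point in your favour: you handle $n\leq k+1$ as an explicit base case, which is in fact needed because the paper's choice $p=(k+1)^{1-1/d}n^{1/d}$ only satisfies the hypothesis $p\geq k+1$ of \cref{TreewidthSepNew} when $n\geq k+1$; the paper's proof glosses over this, whereas your version is airtight.
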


The proof of \cref{TreewidthTDproduct} is based on a separator lemma for graphs of given treewidth that generalises \eqref{septw} and is of independent interest (\cref{TreewidthSep}). The lower bound, $m(k,n,\tdb)\in \Omega(n^{1/(\tdb+1)})$, follows by considering the case when $G$ is a path. Both these proofs are presented in \cref{BoundedTreewidthGraphs}.

Loosely speaking, \cref{TreewidthTDproduct} gives good bounds for graphs of bounded treewidth (or very small treewidth as a function of $n$), whereas \cref{Almost} (and the other results in \cref{BoundedTreedepth}) give better bounds in the strongly sublinear treewidth setting.

\section{Star Partitions}
\label{StarPartitions}

Recall \cref{Primary}, which shows that graph classes with strongly sublinear separation-number can be characterised via tree-partitions and star-partitions.

\Primary*

We now prove \cref{Primary}. Let (iv) be the statement that $\GG$ has strongly sublinear tree-width. It follows from the definitions that  $\tpw(G)\leq \spw(G)$ for every graph $G$; thus 
\cref{SSspw} $\Rightarrow$ \cref{SStpw}. \mycitet{Seese85} observed that $\tw(G) \leq 2 \tpw(G)-1$; thus \cref{SStpw} $\Rightarrow$ (iv). By \cref{Folklore}, (iv) $\Rightarrow$ \cref{SSS}. 

It remains to prove that \cref{SSS} $\Rightarrow$ \cref{SSspw}. Note that any graph $G$ has $\spw(G)\leq k$ if and only if $G$ has a set $S$ of at most $k$ vertices such that each component of $G-S$ has at most $k$ vertices. We thus use the following foklore result\footnote{\cref{LT} is proved by the following argument: initialise $S\coloneqq\emptyset$, while there is a component $X$ of $G-S$ with more than $n^\alpha$ vertices, add a balanced separator in $X$ to $S$, and repeat until every component of $G-S$ has at most $n^\alpha$ vertices. The idea is present in the work of \citet{LT79,LT80} for planar graphs, and in the work of \citet{EM94} and \citet{DN17} for hereditary graph class with strongly sublinear separators. See \citep{WoodSurvey} for an explicit proof of \cref{LT}.}.

\begin{lem}[\citep{EM94}] 
\label{LT} 
Let $\GG$ be any hereditary graph class with $\sep(\GG)\leq cn^{1-\epsilon}$, for some $c>0$ and $\epsilon\in(0,1)$. Then for any $\alpha\in(0,1)$ and any $n$-vertex graph $G\in \GG$ there exists $S\subseteq V(G)$ of size at most $\frac{c 2^\epsilon}{2^\epsilon-1} n^{1-\alpha\epsilon}$ such that each component of $G-S$ has at most $n^\alpha$ vertices.
\end{lem}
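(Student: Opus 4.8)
The plan is to build $S$ greedily, exactly as in the footnote's sketch: initialise $S:=\emptyset$ and, while some component of $G-S$ has more than $n^\alpha$ vertices, choose such a component $X$ and add to $S$ a minimum balanced separator $B_X$ of $X$. Since $\GG$ is hereditary, $X\in\GG$, so $\abs{B_X}=\sep(X)\le c\,\abs{V(X)}^{1-\epsilon}$; and the components of $G-S$ contained in $X$ are precisely the components of $X-B_X$, each with at most $\tfrac12\abs{V(X)}$ vertices. Thus the family of processed components is laminar, and in the forest it induces every child has at most half as many vertices as its parent. Since a processed component has more than $n^\alpha$ vertices, this forest has depth less than $(1-\alpha)\log_2 n$ and finite branching, so the process terminates; on termination every component of $G-S$ has at most $n^\alpha$ vertices, as required, and it remains only to bound $\abs S$.

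For the size bound, I would first note that the separators chosen at distinct steps are pairwise disjoint, because $B_X\subseteq V(X)$ while every other processed component is either disjoint from $X$ or contained in $V(X)\setminus B_X$; hence $\abs S=\sum_X\abs{B_X}\le c\sum_X\abs{V(X)}^{1-\epsilon}$, the sum over all processed components $X$. Now charge each vertex of a processed component $X$ a weight $c\,\abs{V(X)}^{1-\epsilon}/\abs{V(X)}=c\,\abs{V(X)}^{-\epsilon}$, so that $\abs S$ equals the total weight charged. For a fixed vertex $v$, the processed components containing $v$ form a nested chain $X^{(0)}\supsetneq X^{(1)}\supsetneq\dots\supsetneq X^{(m)}$ with $\abs{V(X^{(j+1)})}\le\tfrac12\abs{V(X^{(j)})}$ and $\abs{V(X^{(j)})}>n^\alpha$ for every $j$, so $\abs{V(X^{(j)})}\ge 2^{m-j}\abs{V(X^{(m)})}$ and therefore the total weight charged to $v$ satisfies
\[
\sum_{j=0}^{m}c\,\abs{V(X^{(j)})}^{-\epsilon}\;\le\;c\,\abs{V(X^{(m)})}^{-\epsilon}\sum_{i\ge 0}2^{-i\epsilon}\;=\;\frac{c\,2^{\epsilon}}{2^{\epsilon}-1}\,\abs{V(X^{(m)})}^{-\epsilon}\;<\;\frac{c\,2^{\epsilon}}{2^{\epsilon}-1}\,n^{-\alpha\epsilon}.
\]
Summing over the at most $n$ vertices of $G$ gives $\abs S\le\frac{c\,2^{\epsilon}}{2^{\epsilon}-1}\,n^{1-\alpha\epsilon}$, as required.

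The proof is short, and the only point that needs care is extracting the precise constant $\frac{c\,2^{\epsilon}}{2^{\epsilon}-1}$ rather than just a strongly sublinear bound. Processing all currently-large components in ``rounds'' and summing the separator cost round by round is conceptually simpler but loses a $\Theta(\log n)$ factor, since there can be $\Theta(\log n)$ rounds; the per-vertex charging above is exactly what turns the bookkeeping into a convergent geometric series in the component sizes (with ratio $2^{\epsilon}$), eliminating the logarithm. The remaining details—laminarity and pairwise disjointness of the chosen separators, and the identification of the components of $G-S$ inside $X$ with those of $X-B_X$—are routine.
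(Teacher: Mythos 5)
Your proof is correct and follows the same greedy recursion as the paper's own footnote sketch (the paper defers the full bookkeeping to an external reference, so you had to supply it). The per-vertex charging argument with the geometric series in component sizes is a clean way to extract the exact constant $\tfrac{2^{\epsilon}}{2^{\epsilon}-1}$, and all the supporting claims (laminarity of the processed components, disjointness of the separators, the halving along the chain containing a fixed vertex, and $|V(X^{(m)})|>n^{\alpha}$) hold; the only nit is that the total weight charged \emph{upper-bounds} $|S|$ rather than equals it, but this is in the direction you need.
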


The next result follows from \cref{LT} with $\alpha=\frac{1}{1+\epsilon}$ (since  $1-\alpha\epsilon=1-\frac{\epsilon}{1+\epsilon}
=\frac{1}{1+\epsilon}$). 

\begin{cor}
\label{SepSPW}
Let $\GG$ be any hereditary graph class with $\sep(\GG)\leq cn^{1-\epsilon}$, for some $c>0$ and $\epsilon\in(0,1)$. Then for every $n$-vertex graph $G\in \GG$ there exists $S\subseteq V(G)$, such that $S$ and each component of $G-S$ has at most 
$\max\{\frac{c 2^\epsilon}{2^\epsilon-1},1\} n^{1/(1+\epsilon)}$ vertices, implying $$\spw(G) \leq 
\max\{\tfrac{c\, 2^\epsilon}{2^\epsilon-1},1\}\, n^{1/(1+\epsilon)}.$$
\end{cor}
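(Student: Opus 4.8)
The plan is to deduce \cref{SepSPW} from \cref{LT} by choosing $\alpha$ so that the two bounds coming out of \cref{LT} — the size of $S$ (which is at most $\frac{c2^\epsilon}{2^\epsilon-1}n^{1-\alpha\epsilon}$) and the size of each component of $G-S$ (which is at most $n^\alpha$) — have the same exponent in $n$. Setting $1-\alpha\epsilon=\alpha$ gives $\alpha=\frac{1}{1+\epsilon}$, and then both exponents equal $\frac{1}{1+\epsilon}$; this is exactly the hint already supplied in the excerpt. So the first step is just to apply \cref{LT} with this value of $\alpha$ and record that $|S|\le \frac{c2^\epsilon}{2^\epsilon-1}n^{1/(1+\epsilon)}$ while each component of $G-S$ has at most $n^{1/(1+\epsilon)}$ vertices.

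Next I would combine the two bounds into a single clean statement. Both $|S|$ and the component sizes are bounded by $\max\{\frac{c2^\epsilon}{2^\epsilon-1},1\}\,n^{1/(1+\epsilon)}$: the $\max$ with $1$ is there only because the component bound $n^{1/(1+\epsilon)}$ could in principle exceed $\frac{c2^\epsilon}{2^\epsilon-1}n^{1/(1+\epsilon)}$ when $c$ is small. This gives the first sentence of the conclusion.

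The final step is to translate this separator structure into a star-partition of the stated width. Recall (as noted in the paragraph before \cref{LT}) that $\spw(G)\le k$ if and only if $G$ has a set $S$ of at most $k$ vertices such that every component of $G-S$ has at most $k$ vertices: take the star $H$ whose centre is the part $S$ and whose leaves are the components of $G-S$; this is a valid $H$-partition since every edge of $G$ not inside a component has an endpoint in $S$, and its width is $\max\{|S|, \max_X |V(X)|\}$ over components $X$. Applying this with $k=\max\{\frac{c2^\epsilon}{2^\epsilon-1},1\}\,n^{1/(1+\epsilon)}$ yields $\spw(G)\le\max\{\frac{c2^\epsilon}{2^\epsilon-1},1\}\,n^{1/(1+\epsilon)}$, completing the proof.

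There is no real obstacle here — the corollary is a direct specialisation of \cref{LT} plus the elementary equivalence between small-separator structure and bounded star-partition-width. The only point requiring a moment's care is the bookkeeping of the $\max$ with $1$ and confirming that the chosen $\alpha$ lies in $(0,1)$, which it does since $\epsilon\in(0,1)$ forces $\frac{1}{1+\epsilon}\in(\tfrac12,1)$.
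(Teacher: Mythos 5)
Your proposal is correct and takes essentially the same route as the paper: apply \cref{LT} with $\alpha=\tfrac{1}{1+\epsilon}$ so that the separator bound $n^{1-\alpha\epsilon}$ and the component bound $n^{\alpha}$ share the exponent $\tfrac{1}{1+\epsilon}$, then invoke the noted equivalence between such a separator structure and bounded star-partition-width. The only cosmetic difference is that you spell out the star-partition construction and the purpose of the $\max$ with $1$, which the paper leaves implicit.
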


\cref{SepSPW} shows that \cref{SSS} $\Rightarrow$ \cref{SSspw} in \cref{Primary}, which completes the proof of \cref{Primary}.

As a concrete example, if $\GG$ is any hereditary graph class with $\sep(\GG)\in O(n^{1/2})$, then $\spw(\GG)\in O(n^{2/3})$. For example, for every $n$-vertex planar graph $G$,
$$\tpw(G) \leq \spw(G) \in  O(n^{2/3}).$$
We now show this bound is tight. Consider a graph $G$ with $\tpw(G)\leq k$. A proper 2-colouring of the underlying tree determines an improper 2-colouring of $G$ such that each monochromatic component has at most $k$ vertices. \citet{LMST08} described an infinite class $\GG$ of planar graphs, such that every 2-colouring of any $n$-vertex graph in $\GG$ has a monochromatic component of order $\Omega(n^{2/3})$. So the $O(n^{2/3})$ upper bound on the tree-partition-width of planar graphs is tight.


To conclude this section, we show that the bound on the star-partition-width in \cref{SepSPW} is tight for grid graphs. For any integer $\dims\geq 2$, let \defn{$G_n^\dims$} be the $\dims$-dimensional $n^{1/\dims}\times\dots\times n^{1/\dims}$ grid graph (where $n^{1/\dims}\in\NN$). Our starting point is the following isoperimetric inequality by \citet[Theorem~3]{BL91}. For a graph $G$ and $A\subseteq V(G)$, let \defn{$\partial(A)$} be the number of edges in $G$ between $A$ and $V(G)\setminus A$. 

\begin{lem}[\citep{BL91}]
\label{Isoperimetric0}
For any set $A$ of vertices in $G_n^\dims$ with $|A|\leq \frac{n}{2}$, 
$$\partial(A) \geq 
\min\{ |A|^{1-1/r} r\,n^{1/r-1/\dims}: r \in\{1,\dots,\dims\} \}.$$ 
\end{lem}

\begin{cor}
\label{Isoperimetric}
For any set $A$ of vertices in $G_n^\dims$ with $|A|\leq \frac{n}{e^d}$, 
$$\partial(A) \geq \dims\, |A|^{1-1/\dims} .$$ 
\end{cor}

\begin{proof}
By \cref{Isoperimetric0} it suffices to show that if $|A|\leq \frac{n}{e^\dims}$ and $r\in\{1,\dots,\dims\}$, then 
\begin{equation}
   \label{Arnd}
    |A|^{1-1/r} rn^{1/r-1/\dims} \geq \dims|A|^{1-1/\dims}.
\end{equation}    
Since \eqref{Arnd} holds with equality when $r=\dims$, we may assume that $r\neq \dims$. 
Let $c\coloneqq\big(\frac{|A|}{n}\big)^{1/d}$ where $0\leq c\leq 1$, and let $x\coloneqq\frac{r}{\dims}$ where $0<x<1$. Then \eqref{Arnd} is equivalent to 
$x^{x/(1-x)} \geq c$. The function $x^{x/(1-x)}$ is decreasing when $x\in[0,1)$, and $\lim_{x\to1} x^{x/(1-x)}= \frac{1}{e}$. Thus \eqref{Arnd} holds if $c\leq \frac{1}{e}$; that is, if $|A|\leq \frac{n}{e^d}$.
\omitted{
$|A|^{1-1/r} rn^{1/r-1/\dims} \geq \dims|A|^{1-1/\dims}$\\
$(r/\dims)n^{1/r-1/\dims} \geq |A|^{1/r-1/\dims}$\\
$(r/\dims)n^{1/r-1/\dims} \geq (c^\dims n)^{1/r-1/\dims}$\\
$(r/\dims) \geq (c^\dims)^{1/r-1/\dims}$\\
$(r/\dims)^{r\dims} \geq (c^\dims)^{\dims-r}$\\
$(r/\dims)^{r} \geq c^{\dims-r}$\\
$x^{x\dims} \geq c^{(1-x)\dims}$\\
$x^{x/(1-x)} \geq c$}
\end{proof}

\begin{lem}
\label{CompGrid}
If $S$ is any set of at most $\frac{n}{2}$ vertices in $G_n^\dims$ and $q\leq \frac{n}{e^\dims}$ and each component of $G-S$ has at most $q$ vertices, then $|S|\geq\frac{n}{4q^{1/\dims}}$.
\end{lem}

\begin{proof}
Let $A_1,\dots,A_r$ be the components of $G^\dims_n-S$. 
By \cref{Isoperimetric} and since $G^\dims_n$ has maximum degree $2\dims$, 
$$  2\dims |S| \geq \sum_i \partial(A_i) 
\geq \sum_i \dims|A_i|^{1-1/\dims} 
\geq \sum_i \dims|A_i| q^{-1/\dims} 
= \dims q^{-1/\dims} (n-|S|)
\geq \tfrac12 (\dims q^{-1/\dims} n)
\enspace. $$
The result follows.
\end{proof}

Now consider the star-partition-width, $\spw(G_n^\dims)$. 
If $\spw(G_n^\dims)= s$ then $G_n^\dims$ has a set $S$ of at most $s$ vertices such that each component of $G_n^\dims-S$ has at most $s$ vertices.
If $s\le \tfrac{n}{e^\dims}$, then $s\geq \tfrac{n}{4s^{1/\dims}}$ by \cref{CompGrid}, and thus $s\geq(\frac{n}{4})^{\dims/(\dims+1)}$.
Hence $\spw(G_n^\dims)\geq(\frac{n}{4})^{\dims/(\dims+1)}$ when $n\ge \frac{e^{d(d+1)}}{4^d}$.

Let $\GG^\dims$ be the class of all subgraphs of $\dims$-dimensional grid graphs. Then $\sep(\GG^\dims)\leq cn^{1-1/\dims}$ for some $c=c(\dims)$ by a result of \citet{MTTV97}. Thus \cref{SepSPW} with $\epsilon=\frac{1}{\dims}$ proves 
$\spw(\GG^\dims) \leq 
\max\{\tfrac{c\, 2^{1/\dims}}{2^{1/\dims}-1},1\}\, 
n^{\dims/(\dims+1)}$, which matches the above lower bound. That is, for fixed $\dims$, $$\spw(\GG^\dims)=\Theta(n^{\dims/(\dims+1)}).$$


\section{Bounded Tree-depth Partitions}
\label{BoundedTreedepth}

This section shows that $n$-vertex graphs with strongly sublinear separation-number are contained in $H\boxtimes K_m$ where $\td(H)$ is bounded or is at most a slowly growing function of $n$, and $m$ is strongly sublinear with respect to $n$. All the results follow from the next lemma. 

\begin{lem}
\label{AlmostProof}
Let $\GG$ be any hereditary graph class with separation-number $\sep(\GG)\leq cn^{1-\epsilon}$, for some $c\geq 1$ and $\epsilon\in(0,1)$. Then for every $\tdb\in\NN$, every $n$-vertex graph $G\in\GG$ is contained in $H \boxtimes K_m$ for some graph $H$ with $\td(H)\leq\tdb$, where $m \leq \frac{c 2^\epsilon}{2^\epsilon-1}  n^{(1-\epsilon)/(1-\epsilon^\tdb)}$. 
\end{lem}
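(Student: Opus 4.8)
The plan is to prove \cref{AlmostProof} by induction on $d$, using \cref{LT} at each level of the recursion with a carefully chosen exponent, so that the recursion on the partition-width exponent becomes a geometric-type recurrence whose solution is $(1-\epsilon)/(1-\epsilon^d)$.

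First I would set up the base case $d=1$: a graph $H$ with $\td(H)\le 1$ has no edges, so $G$ is contained in $H\boxtimes K_m$ with $\td(H)\le 1$ iff $G$ itself has at most $m$ vertices; since trivially $n\le \frac{c2^\epsilon}{2^\epsilon-1}n$ (as $\frac{2^\epsilon}{2^\epsilon-1}\ge 1$ and we may assume $c\ge$ something, or just note $m=n$ works and $(1-\epsilon)/(1-\epsilon^1)=1$), the claim holds. (I should double-check the constant here; if $c$ is small one can still take $H=\overline{K_n}$ and $m=n$, and $n\le \frac{c2^\epsilon}{2^\epsilon-1}n$ may fail, so more carefully I would phrase the base case as $m=n$ and absorb the constant issue, or note that WLOG $c\ge 2^\epsilon-1$; the cleanest is to observe $\td(H)\le d$ allows $H$ to be edgeless when $d=1$, giving exponent $1$.)

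For the inductive step, assume the result for $d-1$. Given an $n$-vertex $G\in\GG$, apply \cref{LT} with a parameter $\alpha=\alpha_d\in(0,1)$ to be chosen: this yields $S\subseteq V(G)$ with $|S|\le \frac{c2^\epsilon}{2^\epsilon-1}n^{1-\alpha\epsilon}$ such that every component of $G-S$ has at most $n^\alpha$ vertices. Each component $X$ of $G-S$ lies in the hereditary class $\GG$ and has at most $n^\alpha$ vertices, so by induction $X$ is contained in $H_X\boxtimes K_{m_X}$ with $\td(H_X)\le d-1$ and $m_X\le \frac{c2^\epsilon}{2^\epsilon-1}(n^\alpha)^{(1-\epsilon)/(1-\epsilon^{d-1})}$. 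I would then assemble these into an $H$-partition of $G$: take $H$ to be the disjoint union of the $H_X$'s together with one extra vertex $r$ (coming from $S$) made adjacent to everything; place all of $S$ in the bag of $r$, and use the $H_X$-partitions inside each component. Since $r$ is a universal vertex and $G-S$ has no edges between distinct components, this is a valid $H$-partition. The tree-depth of $H$ is at most $1+\max_X\td(H_X)\le 1+(d-1)=d$ (adding a universal apex vertex increases tree-depth by at most $1$, by rooting the tree at $r$). The width of the partition is $\max\{|S|,\ \max_X m_X\}\le \frac{c2^\epsilon}{2^\epsilon-1}\max\{n^{1-\alpha\epsilon},\ n^{\alpha(1-\epsilon)/(1-\epsilon^{d-1})}\}$.

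To make the two exponents equal I would choose $\alpha$ so that $1-\alpha\epsilon=\alpha(1-\epsilon)/(1-\epsilon^{d-1})$, i.e.\ $\alpha\big(\epsilon+(1-\epsilon)/(1-\epsilon^{d-1})\big)=1$; a short computation gives $\alpha+\alpha\epsilon\frac{1-\epsilon^{d-2}}{?}$... more cleanly, solving $1-\alpha\epsilon = \alpha\frac{1-\epsilon}{1-\epsilon^{d-1}}$ yields $\alpha=\frac{1-\epsilon^{d-1}}{1-\epsilon^{d-1}+\epsilon(1-\epsilon)\cdot\frac{?}{}}$; in any case the resulting common exponent $1-\alpha\epsilon$ should simplify to $\frac{1-\epsilon}{1-\epsilon^{d}}$, which is exactly the claimed bound. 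I expect the main obstacle to be purely bookkeeping: verifying that the chosen $\alpha$ indeed produces the exponent $(1-\epsilon)/(1-\epsilon^d)$, and confirming $\alpha\in(0,1)$ so that \cref{LT} applies and the components are genuinely smaller (needed for the induction to terminate). The structural/graph-theoretic content—that adding a universal vertex to a disjoint union of bounded-tree-depth graphs keeps tree-depth bounded, and that this gives a legitimate $H$-partition—is straightforward; the only real care is the arithmetic of the recurrence and handling the constant in the base case. I would also note that $G-Y$-type issues do not arise here since $S$ is absorbed into the partition rather than deleted.
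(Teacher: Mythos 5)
Your proposal is correct and follows essentially the same route as the paper: induct on $d$, apply \cref{LT} with $\alpha$ chosen to balance $1-\alpha\epsilon$ against $\alpha(1-\epsilon)/(1-\epsilon^{d-1})$ (which gives $\alpha=(1-\epsilon^{d-1})/(1-\epsilon^d)$ and common exponent $(1-\epsilon)/(1-\epsilon^d)$), then attach a universal root to the disjoint union of the inductively-obtained host graphs. The algebra you were unsure about does work out exactly as you expect, and the constant-in-the-base-case worry dissolves since $c\ge 1$ is forced by the $n=1$ case of the separator hypothesis.
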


\begin{proof}
We proceed by induction on $\tdb$. 
If $\tdb=1$ then the claim is trivial---just take $H=K_1$.
Now assume that $\tdb\geq 2$ and the result holds for $\tdb-1$. 
Let $\alpha\coloneqq(1-\epsilon^{\tdb-1})/(1-\epsilon^\tdb)\in (0,1)$ and  $\gamma\coloneqq\frac{c 2^\epsilon}{2^\epsilon-1}$. 
Note that 
$$1-\alpha\epsilon
 = 
1-\frac{\epsilon(1-\epsilon^{\tdb-1})}{1-\epsilon^\tdb}
 = 
\frac{ 1-\epsilon^\tdb -\epsilon(1-\epsilon^{\tdb-1})}{1-\epsilon^\tdb}
 = 
\frac{ 1-\epsilon}{1-\epsilon^\tdb}.$$
Let $G$ be any $n$-vertex graph in $\GG$. By \cref{LT}, there exists $S\subseteq V(G)$ of size at most $$
 \gamma n^{1-\alpha\epsilon}
 = 
\gamma n^{( 1-\epsilon)/(1-\epsilon^\tdb)}$$ 
 such that each component of $G-S$ has at most $n^\alpha$ vertices. Say $G_1,\dots,G_k$ are the components of $G-S$. Let $n_i\coloneqq|V(G_i)| \leq n^\alpha$. By induction, $G_i$ is contained in $H_i \boxtimes K_q$ for some graph $H_i$ with $\td(H_i)\leq \tdb-1$, where 
 $$q \leq 
 \gamma n_i^{(1-\epsilon)/(1-\epsilon^{\tdb-1})}
\leq \gamma n^{\alpha(1-\epsilon)/(1-\epsilon^{\tdb-1})}
= \gamma n^{ (1-\epsilon)/(1-\epsilon^\tdb)}.$$
So $H_i$ is a subgraph of the closure of a rooted tree $T_i$ of vertex-height $\tdb-1$. Let $T$ be obtained from the disjoint union of $T_1,\dots,T_k$ by adding one root vertex $r$ adjacent to the roots of $T_1,\dots,T_k$. Let $H$ be the closure of $T$. So $\td(H)\leq\tdb$. By construction, $G$ is contained in $H\boxtimes K_m$ where 
$m \leq \max\{|S|,q \} \leq \gamma n^{(1-\epsilon)/(1-\epsilon^\tdb)}$, as desired. 
\end{proof}

Note that \cref{DimGridPartitionTreeDepthCor} in \cref{LowerBounds} shows that the $(1-\epsilon)/(1-\epsilon^\tdb)$ term in \cref{AlmostProof} is best possible whenever $\frac{1}{\epsilon}$ is an integer at least 2.

Recall \cref{Almost} from \cref{Introduction}.

\Almost*

\begin{proof}
Apply \cref{AlmostProof} with $\tdb \coloneqq \ceil{\log_\epsilon(\frac{\delta}{1-\epsilon+\delta})}$. Note that 
$\epsilon^\tdb(1-\epsilon+\delta)\leq \delta$, implying $1-\epsilon \leq (1-\epsilon)+\delta -\epsilon^\tdb(1-\epsilon+\delta)= (1-\epsilon+\delta)(1-\epsilon^\tdb)$. Thus every $n$-vertex graph $G\in\GG$ is contained in $H \boxtimes K_m$ for some graph $H$ with $\td(H)\leq\tdb$, where $m \leq 
\frac{c 2^\epsilon}{2^\epsilon-1} n^{(1-\epsilon)/(1-\epsilon^\tdb)}
= \frac{c 2^\epsilon}{2^\epsilon-1} n^{1-\epsilon+\delta}$.
\end{proof}

The next two results allow $\td(H)$ to increase slowly with $n$. 

\begin{thm}
\label{AlmostAlmost}
Fix $\epsilon\in(0,1)$. Let $\tdb:\mathbb{N}\to\mathbb{R}^+$ be any function. For any hereditary graph class $\GG$ with $\sep(\GG)\leq cn^{1-\epsilon}$ for some constant $c>0$, every $n$-vertex graph $G\in\GG$ is contained in $H \boxtimes K_m$ for some graph $H$ with $\td(H)\leq \ceil{\tdb(n)}$, where 
$m \leq \frac{c \, 2^\epsilon}{2^\epsilon-1}\, n^{1-\epsilon+\delta(n)}$ and $\delta(n)\coloneqq(1-\epsilon)
\big(
(1-\epsilon^{\tdb(n)})^{-1}
-1 \big)$.
\end{thm}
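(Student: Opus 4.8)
The plan is to derive \cref{AlmostAlmost} as a direct consequence of \cref{AlmostProof}, which is already proved in the excerpt. The observation is that \cref{AlmostProof} holds for \emph{every} $d\in\NN$, so we are free to make $d$ depend on $n$: we simply set $d:=\ceil{h(n)}$. This is legitimate because \cref{AlmostProof} applies to each fixed $n$-vertex graph individually, and for each $n$ the value $\ceil{h(n)}$ is a fixed natural number.

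First I would recall the bound from \cref{AlmostProof}: with $d=\ceil{h(n)}$, every $n$-vertex graph $G\in\GG$ is contained in $H\boxtimes K_m$ for some graph $H$ with $\td(H)\leq d=\ceil{h(n)}$, where $m\leq \frac{c\,2^\epsilon}{2^\epsilon-1}\,n^{(1-\epsilon)/(1-\epsilon^d)}$. Next I would rewrite the exponent $(1-\epsilon)/(1-\epsilon^d)$ in the form $1-\epsilon+\delta'(n)$ where $\delta'(n):=\frac{1-\epsilon}{1-\epsilon^d}-(1-\epsilon)=(1-\epsilon)\big((1-\epsilon^d)^{-1}-1\big)$. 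Since $d=\ceil{h(n)}\geq h(n)$ and $\epsilon\in(0,1)$, we have $\epsilon^d\leq \epsilon^{h(n)}$, hence $1-\epsilon^d\geq 1-\epsilon^{h(n)}$ and therefore $\delta'(n)\leq (1-\epsilon)\big((1-\epsilon^{h(n)})^{-1}-1\big)=\delta(n)$. Thus $m\leq \frac{c\,2^\epsilon}{2^\epsilon-1}\,n^{1-\epsilon+\delta'(n)}\leq \frac{c\,2^\epsilon}{2^\epsilon-1}\,n^{1-\epsilon+\delta(n)}$, which is exactly the claimed bound. (One could also simply replace $d$ by $h(n)$ in the exponent directly, but routing through $\ceil{h(n)}$ keeps $\td(H)$ an integer.)

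There is essentially no obstacle here — the content is entirely in \cref{AlmostProof}, and \cref{AlmostAlmost} is a repackaging that makes the dependence on a growing $d$ explicit. The only point requiring a sentence of care is noting that $h(n)\to\infty$ forces $\delta(n)\to 0$ as $n\to\infty$ (since $\epsilon^{h(n)}\to 0$), so the exponent-gap $\delta(n)$ indeed tends to $0$; this is the qualitative improvement over \cref{Almost}, where $\delta$ is a fixed positive constant. I would state the proof in three short lines: apply \cref{AlmostProof} with $d:=\ceil{h(n)}$, bound the exponent using $\epsilon^{\ceil{h(n)}}\leq\epsilon^{h(n)}$, and conclude.
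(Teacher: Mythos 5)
Your proof is correct and matches the paper's own argument: apply \cref{AlmostProof} with $d:=\ceil{h(n)}$, then observe that $\epsilon^{\ceil{h(n)}}\leq\epsilon^{h(n)}$ gives $(1-\epsilon)/(1-\epsilon^d)\leq 1-\epsilon+\delta(n)$, yielding the claimed bound on $m$. (As a side note, your write-up even corrects a minor typo in the paper's displayed inequality, which drops the factor $\frac{c\,2^\epsilon}{2^\epsilon-1}$ from the final bound.)
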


\begin{proof}
Apply \cref{AlmostProof} with $\tdb\coloneqq\ceil{\tdb(n)}$. Note that
$\delta(n) 
\geq (1-\epsilon)(\frac{1}{1-\epsilon^\tdb} -1) 
=
\frac{1-\epsilon}{1-\epsilon^\tdb} - ( 1-\epsilon) $, implying
$\frac{1-\epsilon}{1-\epsilon^\tdb} \leq 1-\epsilon+\delta(n)$.
Thus every $n$-vertex graph $G\in\GG$ is contained in $H \boxtimes K_m$ for some graph $H$ with $\td(H)\leq \ceil{\tdb(n)}$, where $m \leq  \frac{c \, 2^\epsilon}{2^\epsilon-1}\,n^{(1-\epsilon)/(1-\epsilon^\tdb)}\leq
 \frac{c \, 2^\epsilon}{2^\epsilon-1}\,n^{ 1-\epsilon+\delta(n)}$.
\end{proof}

The novelty of \cref{AlmostAlmost} is that $\tdb(n)$ can be chosen to be any slow-growing function, but the exponent-gap $\delta(n)$ goes to zero as $n\to\infty$. The next result with exponent-gap 0 follows from \cref{AlmostAlmost} by taking a specific function $h$.

\begin{thm}
\label{AlmostAlmostAlmost}
Fix $\epsilon\in(0,1)$ and $c>0$. For any hereditary graph class $\GG$ with $\sep(\GG)\leq cn^{1-\epsilon}$,  every $n$-vertex graph $G\in\GG$ is contained in $H \boxtimes K_m$ for some graph $H$ with $\td(H)\leq \bigl\lceil\frac{\log (1+\log n)}{-\log\epsilon}\bigr\rceil $, where $m \leq \frac{2c}{2^\epsilon-1}\, n^{1-\epsilon}$.
\end{thm}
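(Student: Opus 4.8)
The statement is Theorem~\ref{AlmostAlmostAlmost}, and the hint in the text is that it should follow from \cref{AlmostAlmost} "by taking a specific function $h$". So the plan is to choose $h(n)$ so that simultaneously (i) $\td(H)\le\lceil h(n)\rceil$ stays as small as the claimed $\frac{\log(1+\log n)}{-\log\epsilon}$, and (ii) the exponent-gap $\delta(n)=(1-\epsilon)\bigl((1-\epsilon^{h(n)})^{-1}-1\bigr)$ shrinks fast enough that the resulting bound $m\le\gamma\,n^{1-\epsilon+\delta(n)}=\gamma\,n^{1-\epsilon}\cdot n^{\delta(n)}$ collapses to $\frac{2c}{2^\epsilon-1}n^{1-\epsilon}$, i.e.\ so that $n^{\delta(n)}$ is absorbed into replacing the constant $2^\epsilon$ in the numerator $\gamma=\frac{c\,2^\epsilon}{2^\epsilon-1}$ by $2$.

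The natural choice is $h(n):=\log_{1/\epsilon}(1+\log n)=\frac{\log(1+\log n)}{-\log\epsilon}$, which is exactly the quantity appearing in the statement, so (i) is immediate. With this choice, $\epsilon^{h(n)}=\frac{1}{1+\log n}$, so $1-\epsilon^{h(n)}=\frac{\log n}{1+\log n}$ and hence $(1-\epsilon^{h(n)})^{-1}-1=\frac{1}{\log n}$. Therefore $\delta(n)=\frac{1-\epsilon}{\log n}$, and
\[
n^{\delta(n)}=n^{(1-\epsilon)/\log n}=2^{(\log n)\cdot(1-\epsilon)/\log n}=2^{1-\epsilon}\le 2.
\]
(Here I use that logarithms are binary, as stated in the paper, so $n^{1/\log n}=2$.) Plugging into the bound from \cref{AlmostAlmost} gives $m\le\frac{c\,2^\epsilon}{2^\epsilon-1}\,n^{1-\epsilon+\delta(n)}=\frac{c\,2^\epsilon}{2^\epsilon-1}\,n^{1-\epsilon}\cdot n^{\delta(n)}\le\frac{c\,2^\epsilon}{2^\epsilon-1}\cdot 2^{1-\epsilon}\cdot n^{1-\epsilon}=\frac{2c}{2^\epsilon-1}\,n^{1-\epsilon}$, as required, while $\td(H)\le\lceil h(n)\rceil$, and since the statement is a bound on $\td(H)$ one can present it with the ceiling or absorb it (the paper writes it without a ceiling, presumably treating $\td(H)\le h(n)$ up to the rounding inherent in a tree-depth being an integer — I would keep the $\lceil\cdot\rceil$ to be safe, or note $\td(H)$ is an integer at most $h(n)+1$).

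There is essentially no obstacle here: the only thing to be careful about is the base of the logarithm (the identity $n^{1/\log n}=2$ relies on binary $\log$) and the direction of the inequality in $\delta(n)$ — \cref{AlmostAlmost} already gives $\frac{1-\epsilon}{1-\epsilon^d}\le 1-\epsilon+\delta(n)$ for $d=\lceil h(n)\rceil$, and since $d\ge h(n)$ makes $\epsilon^d\le\epsilon^{h(n)}$ so $\frac{1-\epsilon}{1-\epsilon^d}\le\frac{1-\epsilon}{1-\epsilon^{h(n)}}=(1-\epsilon)+\delta(n)$, everything is monotone the right way. For very small $n$ (say $n$ with $\log n<1$) the expression $h(n)$ could be below $1$, but then $\lceil h(n)\rceil=1$, $H=K_1$, and $m\le n$, which is trivially within the claimed bound once $n$ is large enough; so the statement is understood for $n$ sufficiently large and I would state it that way or simply note the small cases are trivial.
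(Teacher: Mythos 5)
Your proof is correct and essentially identical to the paper's: you make the same choice $h(n)=\log_{1/\epsilon}(1+\log n)$, compute $\epsilon^{h(n)}=\tfrac{1}{1+\log n}$ and $\delta(n)=\tfrac{1-\epsilon}{\log n}$, observe $n^{\delta(n)}=2^{1-\epsilon}$, and substitute into \cref{AlmostAlmost}. Your aside about the ceiling is a fair observation --- \cref{AlmostAlmost} only gives $\td(H)\le\lceil h(n)\rceil$, so the stated bound on $\td(H)$ should really carry a ceiling --- but that is a rounding slip in the paper's statement, not a gap in your argument.
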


\begin{proof}
Let $\tdb(n)\coloneqq \frac{\log (1+ \log n)}{-\log\epsilon} 
= \log_\epsilon( \frac{1}{1+\log n} ) $.
Thus
$\epsilon^{\tdb(n)} = \frac{1}{1+\log n}
= 1- \frac{\log n}{1+\log n}$, 
implying
$(1-\epsilon^{\tdb(n)})^{-1} -1 =
(\frac{1+\log n}{\log n}) -1 = 
\frac{1}{\log n}$. 
Hence
$\delta(n) \log n= (1-\epsilon)
\big(
(1-\epsilon^{\tdb(n)})^{-1}
-1 \big) \log n = 1-\epsilon$
and
$n^{\delta(n)} = 2^{1-\epsilon}$.
The result follows from 
\cref{AlmostAlmost} since
$m \leq 
\frac{c \, 2^\epsilon}{2^\epsilon-1}\, n^{1-\epsilon+\delta(n)}
=
\frac{c \, 2^\epsilon 2^{1-\epsilon}}{2^\epsilon-1}\, n^{1-\epsilon}
=
\frac{2c}{2^\epsilon-1}\, n^{1-\epsilon}
$.
\end{proof}

\section{Lower Bounds}
\label{LowerBounds}

This section proves lower bounds that show that several results in the previous section are best possible. Recall that $G_n^d$ is the $d$-dimensional $n^{1/d}\times\cdots\times n^{1/d}$ grid graph.

\begin{lem}
\label{DimPartialA}
Fix integers $\dims,\tdb\geq 2$. Let $G\coloneqq G^\dims_n$, let $H$ be any graph with $\td(H)\leq\tdb$, and let $A\subseteq V(G)$ such that $G[A]$ has an $H$-partition of width $s$  where 
\begin{equation}
\label{sBounds}
2^{\dims^{\tdb-1}} \leq 
s\leq 
\left(\tfrac{n}{36 e^\dims}\right)^{(\dims-1)\dims^{\tdb-2}/(\dims^{\tdb-1}-1)}.
\end{equation}
Then 
$$\partial(A)\geq 
\tfrac{\dims}{6} \, 
s^{-(\dims^{\tdb-1}-1)/((\dims-1)\dims^{\tdb-1})} |A| -3\dims s .$$
\end{lem}

\begin{proof}
We proceed by induction on $\tdb$ (with $\dims$ fixed). First suppose that $\tdb=2$. Let $R$ be the root part in the $H$-partition of $G[A]$. 
Thus $|R|\leq s$ and $\partial(A-R)\leq \partial(A) + \partial(R) \leq \partial(A) + 2\dims s$ (since $G$ has maximum degree at most $2\dims$). 
Let $C$ be the vertex-set of a component of $G[A]-R$.
Thus $|C|\leq s\leq\frac{n}{e^\dims}$ and 
$\partial(C)\geq \dims |C|^{1-1/\dims}$ by \cref{Isoperimetric}. Summing over all such $C$, 
$$ \partial(A-R) =
\sum_C \partial(C) \geq 
\sum_C \dims |C|^{1-1/\dims} \geq 
\sum_C \dims |C| s^{-1/\dims} = 
\dims s^{-1/\dims} (|A|-|R|) \geq 
\dims s^{-1/\dims} (|A|-s) .$$
Thus $$ \partial(A) 
\geq \partial(A-R) -2\dims s 
\geq  \dims s^{-1/\dims} (|A|-s) -2\dims s
> \tfrac{\dims}{6}\,s^{-1/\dims} |A| -3\dims s,$$
as claimed. 

Now assume that $\tdb\geq 3$ and the result holds for $\tdb-1$. 
Let $A\subseteq V(G)$ such that $G[A]$ has an $H$-partition of width $s$, for some graph $H$ with $\td(H)\leq\tdb$. 
Let $R$ be the root part in the $H$-partition of $G[A]$. 
Thus $|R|\leq s$ and 
\begin{equation}
\label{AR}
\partial(A-R)\leq \partial(A) + \partial(R) \leq \partial(A) + 2\dims s.
\end{equation}

Let $C$ be the vertex-set of a component of $G[A]-R$.
So $G[C]$ has an $H'$-partition of width $s$, for some graph $H'$ with $\td(H')\leq \tdb-1$.
Say $C$ is \defn{big} if 
$|C| \geq 
36 \, 
s^{
(\dims^{\tdb-1} -1 ) / ( (\dims-1)\dims^{\tdb-2} ) }
$ 
and \defn{small} otherwise. 

First suppose that $C$ is small.
Then $|C|\leq 36 \, 
s^{
(\dims^{\tdb-1} -1 ) / ( (\dims-1)\dims^{\tdb-2} ) }$, 
which is at most $\frac{n}{e^\dims}$ by the upper bound on $s$ in \cref{sBounds}. Thus \cref{Isoperimetric} is applicable, and 
\begin{align*}
\partial(C)\geq 
\dims|C|^{1-1/\dims}
  \geq
\tfrac{\dims}{6}\,|C| \,
 \big(
 s^{(\dims^{\tdb-1} -1)/((\dims-1)\dims^{\tdb-2})} \big)^{-1/\dims}
  = 
\tfrac{\dims}{6}\,
 s^{-(\dims^{\tdb-1}-1)/((\dims-1)\dims^{\tdb-1})} \,|C|.
\end{align*}

Now suppose that $C$ is big. 
Since \cref{sBounds} holds for $t$, it also holds for $t-1$. Thus, by induction, 
\begin{align*}
\partial(C)  \geq 
\tfrac{\dims}{6}\,
s^{-(\dims^{\tdb-2}-1)/((\dims-1)\dims^{\tdb-2})} |C| -3\dims s 
& \geq
\tfrac{\dims}{12}\,
s^{-(\dims^{\tdb-2}-1)/((\dims-1)\dims^{\tdb-2})} |C| \\
& \geq 
\tfrac{\dims}{6}\, 
s^{-(\dims^{\tdb-1}-1)/((\dims-1)\dims^{\tdb-1})}\,|C| ,
\end{align*}
where the final inequality holds since $s\geq 2^{\dims^{\tdb-1}}$.
\omitted{\begin{align*}
\tfrac{\dims}{12} \,
s^{(1-\dims^{\tdb-2})/(\dims-1)\dims^{\tdb-2}} |C|
&  \geq \tfrac{\dims}{6}\,
s^{( 1-\dims^{\tdb-1})/(\dims-1)\dims^{\tdb-1}}\,|C| \\
s^{(1-\dims^{\tdb-2})/(\dims-1)\dims^{\tdb-2} \,-\, 
( 1-\dims^{\tdb-1})/(\dims-1)\dims^{\tdb-1}} 
&  \geq 2\\
s^{(\dims-\dims^{\tdb-1} \,- 1+\dims^{\tdb-1}) / (\dims-1)\dims^{\tdb-1}} 
&  \geq 2\\
s^{1 / \dims^{\tdb-1}} 
&  \geq 2\\
s 
&  \geq 2^{\dims^{\tdb-1}}  
\end{align*}}
Summing over all such components, 
 \begin{align*}
 \partial(A-R) 
 =\sum_C \partial(C) 
 &  \geq \sum_{C} 
 \tfrac{\dims}{6} \,
s^{-(\dims^{\tdb-1}-1)/((\dims-1)\dims^{\tdb-1})} \,|C|\\
 & =
 \tfrac{\dims}{6} \,
s^{-(\dims^{\tdb-1}-1)/((\dims-1)\dims^{\tdb-1})}\,(|A|-|R|) \\
 & \geq
 \tfrac{\dims}{6} \,
s^{-(\dims^{\tdb-1}-1)/((\dims-1)\dims^{\tdb-1})} \,(|A|-s).
 \end{align*}
By \cref{AR},
\begin{align*}
\partial(A) 
\geq \partial(A-R)-2\dims s 
& \geq 
 \tfrac{\dims}{6} \,
s^{-(\dims^{\tdb-1}-1)/((\dims-1)\dims^{\tdb-1})} \,(|A|-s)
-2\dims s\\
& \geq 
 \tfrac{\dims}{6} \,
s^{(\dims^{\tdb-1}-1)/((\dims-1)\dims^{\tdb-1})} \,|A| 
-3\dims s,
\end{align*}
as desired. 
\end{proof}

\begin{lem}
\label{DimGridPartitionTreeDepth}
Fix integers $\dims,\tdb\geq 2$ and $s\geq 5^{\dims^\tdb}$. Let $G\coloneqq G^\dims_n$ where $n\gg \dims,\tdb$. Let $H$ be any graph with $\td(H)\leq\tdb$, such that $G$ has an $H$-partition of width $s$. Then 
$$s\geq \left(\tfrac{n}{12}\right)^{(\dims-1)\dims^{\tdb-1}/( \dims^{\tdb}-1)}. $$
\end{lem}

\begin{proof}
If $s\geq 
\left(\tfrac{n}{36 e^\dims}\right)^{(\dims-1)\dims^{\tdb-2}/(\dims^{\tdb-1}-1)}$ 
 then 
$s\geq \left(\tfrac{n}{12}\right)^{(\dims-1)\dims^{\tdb-1}/( \dims^{\tdb}-1)}$
for large enough $n\gg \dims,\tdb$,  
and we are done. 
Now assume that 
$s\leq 
\left(\tfrac{n}{36 e^\dims}\right)^{(\dims-1)\dims^{\tdb-2}/(\dims^{\tdb-1}-1)}$, which is required below when applying \cref{DimPartialA}. 

Let $M$ be the root part in the $H$-partition of $G$. Let $A_1,\dots,A_p$ be the vertex-sets of the components of $G-M$. 

First suppose that $\tdb=2$. Thus $|M|,|A_1|,\dots,|A_p|\leq s \leq\frac{n}{e^\dims}$. By \cref{CompGrid}, $s\geq |M|\geq\tfrac{n}{4s^{1/\dims}}$ and
$4s^{(\dims+1)/\dims} \geq n$ and
$$s 
\geq\left(\tfrac{n}{4}\right)^{\dims/(\dims+1)}
\geq\left(\tfrac{n}{12}\right)^{(\dims-1)\dims / (\dims^2-1)},$$
as desired. 

Now assume that $\tdb\geq 3$. 
By assumption, each $G[A_i]$ has an $H_i$-partition of width at most $s$, for some graph $H_i$ with $\td(H_i)\leq \tdb-1$. 
By \cref{DimPartialA}, 
$$ 
 \tfrac{\dims}{6} \,
 s^{-(\dims^{\tdb-2}-1)/((\dims-1)\dims^{\tdb-2})} |A_i| -3\dims s \leq 
\partial(A_i) \leq \partial(M)\leq 2\dims |M| \leq 2\dims s,$$
implying 
$$ |A_i| \leq q\coloneqq 
30 s^{1+ (\dims^{\tdb-2}-1)/((\dims-1)\dims^{\tdb-2})}
= 30 s^{ (\dims^{\tdb-1}-1)/((\dims-1)\dims^{\tdb-2})}.$$ 
Since $s\geq 5^{\dims^{\tdb}}$, 
$$\tfrac{30}{12} e^\dims \leq 
5^\dims \leq s^{1/\dims^{\tdb-1}} = 
s^{ (\dims^{\tdb}-1)/((\dims-1)\dims^{\tdb-1}) \,-\,  (\dims^{\tdb-1}-1)/((\dims-1)\dims^{\tdb-2})} ,$$
implying 
$$30 e^\dims \,
s^{ (\dims^{\tdb-1}-1)/((\dims-1)\dims^{\tdb-2})}
\leq 
12 \, s^{ (\dims^{\tdb}-1)/((\dims-1)\dims^{\tdb-1})} .$$
We may assume the right-hand-side is less than $n$, otherwise we are done. 
Thus
$$q=30 s^{ (\dims^{\tdb-1}-1)/((\dims-1)\dims^{\tdb-2})}
\leq \tfrac{n}{e^\dims} .$$
Hence \cref{Isoperimetric} is applicable to $A_i$, and 
\begin{align*}
2\dims s \geq 
\partial(M) = 
\sum_i\partial(A_i) \geq 
\sum_i \dims|A_i|^{1-1/\dims} \geq 
\sum_i \dims|A_i| q^{-1/\dims} 
& = \dims q^{-1/\dims} (n-|M|)\\
& \geq \dims q^{-1/\dims} (n-s) .
\end{align*}
Therefore
\begin{align*}
n  \leq 2sq^{1/\dims} + s
 = 2s\big(30 s^{(\dims^{\tdb-1}-1)/((\dims-1)\dims^{\tdb-2})}\big)^{1/\dims} + s
& = 2\cdot 30^{1/\dims}\, 
s^{ (\dims^{\tdb}-1)/((\dims-1)\dims^{\tdb-1})} + s\\
& < 12 s^{(\dims^{\tdb}-1)/((\dims-1)\dims^{\tdb-1})}.
 \end{align*}
 The result follows. 
\end{proof}

We now drop the $s\geq 5^{\dims^\tdb}$ assumption in \cref{DimGridPartitionTreeDepth}.

\begin{thm}
\label{DimGridPartitionTreeDepthCor}
Fix integers $\dims,\tdb\geq 2$. Let $G\coloneqq G^\dims_n$ where $n\gg \dims,\tdb$. For any graph $H$ with $\td(H)\leq\tdb$, if $G$ has an $H$-partition of width $s$, then 
$$s \geq  \left(\tfrac{n}{12}\right)^{(\dims-1)\dims^{\tdb-1} / (\dims^\tdb-1)}.
$$ 
\end{thm}

\begin{proof}
By \cref{DimGridPartitionTreeDepth} we may assume that $s<5^{\dims^{\tdb}}$.
Then $G$ has an $H$-partition of width $5^{\dims^{\tdb}}$.
By \cref{DimGridPartitionTreeDepth},
$$5^{\dims^{\tdb}} \geq \left(\tfrac{n}{12}\right)^{(\dims-1)\dims^{\tdb-1}/(\dims^{\tdb}-1)}.$$ 
Since $(\dims-1)\dims^{\tdb-1}/(\dims^{\tdb}-1)>0$, taking $n\gg \dims,\tdb$ we obtain a contradiction. \end{proof}

As mentioned earlier, subgraphs of $\dims$-dimensional grids are a hereditary class with separation-number $O(n^{1-1/\dims})$. The exponent of $n$ in the lower bound in \cref{DimGridPartitionTreeDepthCor} matches the corresponding upper bound in \cref{AlmostProof} with $\epsilon=\frac{1}{\dims}$ since 
$$(1-\epsilon)/(1-\epsilon^\tdb) 
= (1-\tfrac{1}{\dims})/(1-\tfrac{1}{\dims^t})
= \tfrac{\dims-1}{\dims}\cdot \tfrac{\dims^t}{\dims^\tdb-1}
= \tfrac{(\dims-1)\dims^{\tdb-1}}{\dims^\tdb-1}
.$$
Thus \cref{AlmostProof} is best possible whenever $\tfrac{1}{\epsilon}$  is an integer at least 2.

To conclude this section, we now show that the $O(\log\log n)$ term in 
\cref{AlmostAlmostAlmost} is best possible. 

\begin{thm}
\label{DimAlmostAlmostAlmostTight}
Fix any integer $\dims\geq 2$. Assume that there is a function $t$, such that for some $c>0$,  every $\dims$-dimensional grid graph  $G^\dims_n$ is contained in $H \boxtimes K_s$, for some graph $H$ with $\td(H)\leq t(n)$, and where $s \leq c n^{1-1/\dims}$. Then $t(n)\in \Omega(\log \log n)$.  
\end{thm}

\begin{proof}
Let $G\coloneqq G^\dims_n$ and $t\coloneqq t(n)$. By \cref{ProductPartition}, $G$ has an $H$-partition of width $s=\floor{c n^{1-1/\dims}}$. 
If $s\leq 5^{\dims^{t}}$ then 
$c n^{1-1/\dims} \leq 5^{\dims^{t}}+1$ 
and $t(n)\in \Omega(\log \log n)$, as desired. 
Otherwise, $s\geq 5^{\dims^{t}}$, and by \cref{DimGridPartitionTreeDepth},
$$cn^{1-1/\dims} \geq s \geq  \left(\tfrac{n}{12}\right)^{(\dims-1)\dims^{t-1}/(\dims^{t}-1)}.$$ 
Thus
$$
12c\geq  
c\, 12^{(\dims-1)\dims^{\tdb-1}/(\dims^t-1)}\geq  
n^{(\dims-1)\dims^{\tdb-1}/(\dims^{t}-1) - 1 + 1/\dims}
=
n^{ (\dims-1)/(\dims(\dims^{t}-1))}
.$$ 
Thus $(12c)^{\dims(\dims^t-1)/(\dims-1)}\geq   n$, and $t(n)\in \Omega(\log \log n)$, as desired. 
\end{proof}

Note that \cref{DimAlmostAlmostAlmostTight} implies that the strengthening of \cref{mainquestion} with tree-width replaced by tree-depth is false.

\section{Bounded Tree-width Graphs}
\label{BoundedTreewidthGraphs}

This section considers $H$-partitions of graphs with bounded tree-width, where $H$ has bounded tree-depth. We start with the most simple case: star-partitions of trees. 


\begin{lem}
\label{TreeSep}
For any $p,q,n\in\NN$ with $n\leq pq+p+q-1$, any $n$-vertex tree $T$ has a set $S$ of at most $p$ vertices such that each component of $T-S$ has at most $q$ vertices.
\end{lem}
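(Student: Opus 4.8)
The plan is to prove this by induction on $n$, using the standard centroid-type argument for trees: repeatedly peel off a well-chosen vertex whose removal leaves components of controlled size. The cleanest approach I would take is the following. Root $T$ at an arbitrary vertex $r$. If $n \le q$ then $S = \emptyset$ works (each component of $T - \emptyset = T$ has at most $q$ vertices), so assume $n \ge q+1$, hence $p \ge 1$ since $pq + p + q - 1 \ge n \ge q+1$ forces $p(q+1) \ge q+1$, i.e. $p \ge 1$. Now pick a vertex $v$ of $T$ that is \emph{lowest} (farthest from the root, say) among all vertices whose subtree $T_v$ has more than $q$ vertices; such a $v$ exists because $T_r = T$ itself has $n > q$ vertices. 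By the choice of $v$ as lowest such vertex, every child subtree of $v$ has at most $q$ vertices.

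The key step is to verify that after adding $v$ to $S$ and recursing on $T - v$, the arithmetic still works. Removing $v$ from $T$ splits $T$ into: the child subtrees of $v$, each of size $\le q$, and the remaining component $T'$ (the part of $T$ above and beside $v$), which has $n - |T_v| \le n - (q+1)$ vertices. So it suffices to find, inside $T'$, a set $S'$ of at most $p - 1$ vertices hitting all large components; then $S = S' \cup \{v\}$ has at most $p$ vertices and every component of $T - S$ has at most $q$ vertices (the child subtrees of $v$ are already small and are untouched). To apply induction to $T'$ with parameters $p-1$ and $q$, I need $|V(T')| \le (p-1)q + (p-1) + q - 1$. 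Since $|V(T')| \le n - q - 1 \le (pq + p + q - 1) - q - 1 = pq + p - q - 2 = (p-1)q + (p-1) + q - 1 + (q - 1 - q + 1)$; let me just check directly: $(p-1)q + (p-1) + q - 1 = pq - q + p - 1 + q - 1 = pq + p - 2$, and $n - q - 1 \le pq + p + q - 1 - q - 1 = pq + p - 2$, so indeed $|V(T')| \le pq + p - 2 = (p-1)q + (p-1) + q - 1$, as required.

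The main obstacle — really the only thing requiring care — is this bookkeeping: ensuring that \emph{one} vertex removal always reduces $n$ by enough (namely by at least $q+1$ on the ``remaining'' side) to match the drop in the allowance when $p$ decreases by $1$. This is exactly what the choice of $v$ as a lowest vertex with an oversized subtree guarantees: $|T_v| \ge q+1$. One edge case to handle explicitly: if $v = r$, then $T' $ is empty, $|V(T')| = 0 \le (p-1)q + (p-1) + q - 1$ trivially (the bound is nonnegative as $p \ge 1$), and $T_r = T$ has all $n$ vertices, but then all child subtrees of $r$ have size $\le q$ and $S = \{r\}$ already works with $1 \le p$ vertices; so this case causes no trouble. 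Assembling these pieces gives the claim, and the same argument will presumably be the base for the bounded-treewidth generalisation that follows.
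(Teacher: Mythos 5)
Your argument is correct and matches the paper's in essence: both identify a lowest vertex $v$ with $|V(T_v)| > q$ (forcing every child subtree of $v$ to have at most $q$ vertices), place that vertex into $S$, and recurse on $T - T_v$ with the allowance reduced to $p-1$, the key arithmetic being $n - |V(T_v)| \le n - (q+1) \le (p-1)q+(p-1)+q-1$. You induct on $n$ with the trivial base case $n\le q$, whereas the paper inducts on $p$ and disposes of $p=1$ via the classical edge-orientation (centroid) lemma; your base case neatly sidesteps the need for that separate argument. One small point to make explicit: when $p=1$ the recursive call nominally asks for $p-1=0$, which is not in $\NN$ as used here — this is harmless because in that case $|V(T')| \le q-1$ so $S'=\emptyset$ already works via your base case, but it should be stated rather than folded into ``apply induction''.
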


\begin{proof} 
We proceed by induction on $p$. The base case with $p=1$ is folklore. We include the proof for completeness. Orient each edge $vw$ of $G$ from $v$ to $w$ if the component of $T-vw$ containing $v$ has at most $\frac{n}{2}$ vertices. Each edge is oriented by this rule. Since $T$ is acyclic, there is a vertex $v$ in $T$ with outdegree 0. So each component of $T-v$ has at most $\frac{n}{2}\leq q$ vertices, and the result holds with $S=\{v\}$. 

Now assume $p\geq 2$. Root $T$ at an arbitrary vertex $r$. For each vertex $v$, let $T_v$ be the subtree of $T$ rooted at $v$. For each leaf vertex $v$, let $f(v)=0$. For each non-leaf vertex $v$, let $f(v)\coloneqq \max_w |V(T_w)|$ where the maximum is taken over all children $w$ of $v$. If $f(r)\leq q$ then $S=\{r\}$ satisfies the claim. Now assume that $f(r)\geq q+1$. Let $v$ be a vertex of $T$ at maximum distance from $r$ such that $f(v)\geq q+1$. This is well-defined since $f(r)\geq q+1$. By definition, $|V(T_w)|\geq q+1$ for some child $w$ of $v$, but $f(w)\leq q$. So every subtree rooted at a child of $w$ has at most $q$ vertices. Let $T'$ be the subtree of $T$ obtained by deleting the subtree rooted at $w$. Thus $|V(T')|=n-|V(T_w)|\leq n-(q+1)\leq pq+p+q-1-(q+1)
=(p-1)q+(p-1)+q-1$. 
By induction, $T'$ has a set $S'$ of at most $p-1$ vertices such that each component of $T'-S'$ has at most $q$ vertices. Let $S\coloneqq S'\cup\{w\}$. By construction, $|S| \leq p$ and each component of $T-S$ has at most $q$ vertices.
\end{proof}

We now show the bound in \cref{TreeSep} is best possible for the $n$-vertex path $T$ with $n\not\equiv p \pmod{p+1}$. Say $T$ has a set $S$ of $p$ vertices such that each component of $T-S$ has at most $q$ vertices. Since $T$ is a path, $T-S$ has at most $p+1$ components, each with at most $q$ vertices. Thus $n\leq (p+1)q+p$. Since $n \not\equiv p \pmod{p+1}$, we have $n\leq (p+1)q+p-1$.

To generalise \cref{TreeSep} for graphs of given tree-width, we need the following normalisation lemma\footnote{Conditions (a), (b) and (c) in \cref{Normalisation} are well-known. Condition (d) may be new.}.

\begin{lem}
\label{Normalisation}
Every graph with tree-width $k$ has a tree-decomposition 
 $(B_x:x\in V(T))$ such that:
 \begin{enumerate}[\textnormal{(\alph{*})}]
\item $|B_x|=k+1$ for each $x\in V(T)$, 
 \item for each edge $xy\in E(T)$, we have $|B_x\setminus B_y|=1$ and $|B_y\setminus B_x|=1$,
\item $|V(T)|= |V(G)|-k$, and 
\item for any non-empty set $S\subseteq V(T)$, for each  component $T'$ of $T-S$, 
$$\Big|\big(\bigcup_{x\in V(T')}\!\!\!\!\!B_x\big)
\setminus
\big(\bigcup_{x\in S}B_x\big)\Big| 
\leq |V(T')|.$$
\end{enumerate}
\end{lem}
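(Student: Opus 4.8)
The plan is to start from an arbitrary tree-decomposition of width $k$ and successively enforce conditions (a)–(d). First I would recall the standard facts: any graph with $\tw(G)=k$ has a tree-decomposition in which every bag has size exactly $k+1$ (pad bags using vertices from an adjacent bag; this can only decrease nothing and keeps the width) and in which adjacent bags differ by exactly one vertex in each direction, i.e.\ $|B_x\setminus B_y|=|B_y\setminus B_x|=1$ for every edge $xy\in E(T)$. This is obtained by the usual operation of \emph{subdividing} an edge $xy$ whose bags differ by more than one vertex and interpolating a chain of bags between $B_x$ and $B_y$ that adds one vertex and removes one vertex at each step; one also contracts any edge $xy$ with $B_x\subseteq B_y$ (equivalently $B_x=B_y$ here, since all bags have size $k+1$) to remove redundancy. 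After these operations we have (a) and (b). Standard references (e.g.\ \citep{Diestel5}) establish these, so I would cite them rather than reprove them.

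Next, condition (c): once (a) and (b) hold, I count. Root $T$ at an arbitrary node $r$. The root bag $B_r$ contributes $k+1$ vertices; by (b), each non-root node $x$ with parent $y$ satisfies $|B_x\setminus B_y|=1$, so it "introduces" exactly one vertex of $G$ not seen higher up along the root-path — and by the connectedness of the subtree $\{x:v\in B_x\}$ (property (ii) of a tree-decomposition), each vertex of $G$ is introduced exactly once in this sense. Hence $|V(G)| = (k+1) + (|V(T)|-1) = |V(T)|+k$, i.e.\ $|V(T)| = |V(G)|-k$, which is (c). (If $|V(G)|<k+1$ the statement is vacuous or trivial; I would note $n\ge k+1$ may be assumed.)

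For condition (d): fix a non-empty $S\subseteq V(T)$ and a component $T'$ of $T-S$. I want to bound the number of vertices of $G$ that appear in $\bigcup_{x\in V(T')}B_x$ but in no bag of $S$. Since $S\ne\emptyset$ and $T$ is connected, $T'$ is attached to the rest of $T$ through at least one edge $xy$ with $x\in V(T')$ and $y\in S$; pick such an edge and re-root the argument so that $T'$ "hangs below" the node $y$. By (b), $|B_x\setminus B_y|=1$; the single vertex in $B_x\setminus B_y$ is the only vertex that can enter $\bigcup_{x\in V(T')}B_x$ "at the top" without being in $B_y\subseteq\bigcup_{x\in S}B_x$. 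Walking down into $T'$ exactly as in the proof of (c), each of the remaining $|V(T')|-1$ nodes introduces at most one new vertex. So the total number of vertices in $\bigl(\bigcup_{x\in V(T')}B_x\bigr)\setminus\bigl(\bigcup_{x\in S}B_x\bigr)$ is at most $1+(|V(T')|-1)=|V(T')|$, which is (d). A subtle point to get right, and the one I expect to be the main obstacle, is the bookkeeping when $T'$ is adjacent to $S$ through \emph{more than one} edge: then a vertex could conceivably be "introduced" into $T'$ along two different interfaces. I would argue this does not cause overcounting because any vertex $v\in B_x$ for some $x\in V(T')$ that also lies in a bag outside $V(T')$ must, by connectedness of $\{z:v\in B_z\}$, lie in a bag of $S$ (every path from $T'$ to outside $T'$ passes through $S$), and hence is excluded from the set we are counting; so only vertices whose entire "home subtree" lies within $V(T')$ are counted, and those are accounted for by the single descent argument rooted at any one attachment edge, minus the fact that the one vertex of $B_x\setminus B_y$ at that edge might itself be in another bag of $S$ — which only helps. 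Making this precise is the crux; everything else is routine.
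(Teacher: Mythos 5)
Your proposal is correct and follows essentially the same route as the paper: normalise the decomposition so that bags have size exactly $k+1$ and differ by one vertex across each edge, then prove (c) and (d) by rooting the relevant (sub)tree at a node adjacent to (or in) $S$ and counting the unique vertex each bag ``introduces'' relative to its parent. The only cosmetic difference is that the paper proves (d) first and deduces (c) by applying it to $S=\{r\}$, whereas you argue (c) directly and then (d) separately; the underlying injection from vertices of $V'$ to nodes of $T'$ is the same in both.
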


\begin{proof}
Since $G$ has tree-width $k$, $G$ has a tree-decomposition 
 $(B_x:x\in V(T))$ such that $|B_x|\leq k+1$ for each $x\in V(T)$. 
 
If $|B_x|>|B_y|$ for some edge $xy$ of $T$, then add one vertex from $B_x\setminus B_y$ to $B_y$, and repeat this step until $|B_x|=|B_y|$ for each edge $xy\in E(T)$. Now (a) is satisfied (since $G$ has tree-width $k$). 

If $B_x=B_y$ for some edge $xy\in E(T)$, then contract $xy$ into a new vertex $z$ with $B_z\coloneqq B_x$. This operation maintains that $|B_x|=k+1$ for each $x\in V(T)$. Repeat this operation until $B_x\neq B_y$ for each edge $xy\in E(T)$. 

Say $|B_x\setminus B_y|\geq 2$ for some edge $xy\in E(T)$. Thus $|B_y\setminus B_x|\geq 2$. Let $v\in B_x\setminus B_y$ and $w\in B_y\setminus B_x$. Delete the edge $xy$ from $T$, and introduce a new vertex $z$ to $T$ only adjacent to $x$ and $y$, where $B_z\coloneqq  (B_x\setminus \{v\})\cup\{w\}$. This operation maintains property (a) and still $B_x\neq B_y$ for each edge $xy\in E(T)$. Repeat this operation until (b) is satisfied.

We now prove (c). Let $r$ be any vertex of $T$. Define a function $f:V(T-r)\to V(G)\setminus B_r$, where for each $y\in V(T-r)$, if $p$ is the neighbour of $y$ in $T$ closest to $r$, then $f(y)$ is the vertex in $B_y\setminus B_p$. By (b), $f$ is a bijection. Thus $|V(G)\setminus B_r| = |V(T-r)|$ and $|V(G)|=|B_r|+|V(G)\setminus B_r|=(k+1)+(|V(T)|-1)=k+|V(T)|$. This proves (c). 

 We now prove (d). Let $S$ be a non-empty subset of $V(T)$. Let $T'$ be a component of $T-S$. Let $V' \coloneqq  (\bigcup_{y\in V(T')}B_y)\setminus(\bigcup_{x\in S}B_x)$. There is a vertex $x\in S$ adjacent to some vertex in $T'$. For each $y\in V(T')$, if $p$ is the neighbour of $y$ in $T$ closest to $x$, then let $v_y$ be the vertex in $B_y\setminus B_p$. Then $v_y\neq v_z$ for all distinct $y,z\in V(T')$. Each vertex of 
 $V'$ equals $v_y$ for some $y\in V(T')$. Thus $|V'| \leq |V(T')|$. This proves (d). 
  \end{proof}

\begin{thm}
\label{TreewidthSep}
Let $p,q,k,n\in\mathbb{N}$ with $n \leq \bigfloor{\frac{p}{k+1}}(q+1)+q+k-1$ and $p\geq k+1$. Then any $n$-vertex graph $G$ with $\tw(G)\leq k$ has a set $S$ of $p$ vertices such that each component of $G-S$ has at most $q$ vertices.
\end{thm}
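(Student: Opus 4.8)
The plan is to use \cref{Normalisation} to obtain a normalised tree-decomposition $(B_x : x \in V(T))$ of $G$ with $|B_x| = k+1$ for every node $x$ and with property (d), then apply \cref{TreeSep} to the tree $T$ to find a small separating set of nodes, and finally translate this node-separator into a vertex-separator of $G$. Since $\tw(G) \le k$, set $n' := |V(T)| = n - k$ by \cref{Normalisation}(c). The number of nodes we are allowed to separate in $T$ should be $p' := \floor*{\frac{p}{k+1}}$, and we want each component of $T - S'$ (where $S' \subseteq V(T)$, $|S'| \le p'$) to have at most $q$ nodes. By \cref{TreeSep}, this is possible provided $n' \le p'q + p' + q - 1$, i.e. $n - k \le \floor*{\frac{p}{k+1}}(q+1) + q - 1$, which is exactly the hypothesis $n \le \floor*{\frac{p}{k+1}}(q+1) + q + k - 1$.

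Next I would define the vertex-separator. Given the node-set $S' \subseteq V(T)$ with $|S'| \le p'$ from \cref{TreeSep}, let $S := \bigcup_{x \in S'} B_x$. Since each bag has size $k+1$, we get $|S| \le (k+1)|S'| \le (k+1)\floor*{\frac{p}{k+1}} \le p$, so $S$ has at most $p$ vertices (and we may pad it arbitrarily to have exactly $p$ vertices, as permitted). The components of $G - S$ are controlled by the components of $T - S'$: for each component $T'$ of $T - S'$, the set of vertices of $G$ appearing in $\bigcup_{x \in V(T')} B_x$ but not in $S$ is contained in a single component of $G - S$ — this is the standard fact that a tree-decomposition of $G - S$ is induced on $T - S'$ once we remove the bags of $S'$ — and every vertex of $G - S$ lies in such a set. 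By \cref{Normalisation}(d), $\bigl|(\bigcup_{x \in V(T')} B_x) \setminus (\bigcup_{x \in S'} B_x)\bigr| \le |V(T')| \le q$, and since $S \supseteq \bigcup_{x \in S'} B_x$ the count only drops further. Hence each component of $G - S$ has at most $q$ vertices.

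The main obstacle — really the only non-routine point — is verifying that each component of $T - S'$ indeed gives rise to at most one component of $G - S$, and that $\bigcup_{x \in S'} B_x$ already "covers" every vertex that a path in $T$ between two different components of $T - S'$ would need; this is why property (d) of \cref{Normalisation} (bounding the number of \emph{new} vertices in each component relative to $\bigcup_{x \in S'} B_x$, not just relative to an arbitrary reference bag) is the right tool and does the heavy lifting. I would also double-check the edge case $S' = \emptyset$: this happens only when \cref{TreeSep} returns a single-vertex set, so in fact $|S'| \ge 1$ whenever $T$ is non-empty, and if $n \le k$ the statement is trivial since $G$ itself has at most $q$ vertices once $q \ge $ the relevant bound (or one takes $S = V(G)$, using $p \ge k+1 \ge n$). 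Assembling these pieces gives the theorem.
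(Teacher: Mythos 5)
Your proposal is correct and follows essentially the same route as the paper's own proof: normalise the tree-decomposition via \cref{Normalisation}, apply \cref{TreeSep} to the decomposition tree $T$ with $p'=\floor{p/(k+1)}$, take $S=\bigcup_{x\in S'}B_x$, and use \cref{Normalisation}(d) to bound each component of $G-S$ by the node-count of the corresponding component of $T-S'$. The only tiny slip is the remark that $(\bigcup_{x\in V(T')}B_x)\setminus S$ is itself contained in a single component of $G-S$, which need not hold if $G$ is disconnected; but you also state the inclusion actually needed (every component of $G-S$ lies inside such a set), and that is the direction the paper uses, so the argument goes through.
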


\begin{proof}
Let $(B_x:x\in V(T))$ be a tree-decomposition of $G$ satisfying \cref{Normalisation}. Let $p'\coloneqq  \bigfloor{\frac{p}{k+1}}$. Thus $p'\geq 1$ and $p'(q+1) \geq n-q-k+1$, implying
$p'q+p'+q-1 \geq n-k = |V(T)|$ by \cref{Normalisation}(c). By \cref{TreeSep}, $T$ has a set $S_0$ of $p'$ vertices such that each component of $T-S_0$ has at most $q$ vertices. Let $S\coloneqq \bigcup_{x\in S_0}B_x$. So $|S|\leq (k+1)|S_0| \leq (k+1)p'\leq p$. For each component $G'$ of $G-S$ there is a subtree $T'$ of $T-S_0$ such that 
$V(G')\subseteq ( \bigcup_{x\in V(T')} B_x ) \setminus S$, implying $|V(G')|\leq |V(T')|\leq q$ by \cref{Normalisation}(d). \end{proof}

\cref{TreewidthSep} with $p=k+1$ and $q=\ceil{\frac{n-k}{2}}$ says that every graph $G$ with $\tw(G)\leq k$ has a set $S$ of $k+1$ vertices such that each component of $G-S$ has at most $\ceil{\frac{n-k}{2}}$ vertices, which implies \cref{septw} by \citet[(2.6)]{RS-II}. Thus \cref{TreewidthSep} generalises the result of \citet{RS-II} and is of independent interest.

We now show that \cref{TreewidthSep} is roughly best possible for the $k$-th
power of the path, $P_n^k$. This graph has vertex-set $\{v_1,\dots,v_n\}$ where
$v_iv_j$ is an edge if and only if $|i-j|\in\{1,\dots, k\}$. Note that
$\tw(P_n^k)=\pw(P_n^k)=k$.  Consider the graph $P_n^k$ for any integer $n>\tfrac{pq}{k}+p+q$.
Let $S$ be any set of $p$ vertices in $P_n^k$, and define a \defn{block}
to be a maximal set of vertices in $S$ that are consecutive with respect to $P_n$.
Let $b$ be the number of blocks of size at least $k$. So $p \geq bk$ and the
number of components of $P_n^k-S$ is at most $b+1$ (since if $B$ is a block of
size at most $k-1$, then the vertices immediately before and after $B$ are
adjacent in $P_n^k$).   Since $n>\tfrac{pq}{k}+p+q\ge (b+1)q+p$, it follows that
$P_n^k-S$ has a component with more than $q$ vertices.
Hence the bound in \cref{TreewidthSep} cannot be improved to $n\le \tfrac{pq}{k}+p+q+1$.

\cref{TreewidthSep} with $p=q=\ceil{\sqrt{(\tw(G)+1)n}}$ implies that for  every $n$-vertex graph $G$, 
$$\tpw(G) \leq \spw(G) \leq \ceil*{\sqrt{ (\tw(G)+1) n }}.$$ 
The second inequality here is essentially best possible since the argument above shows that $\text{spw}(P_n^k)=(1+o(1))\sqrt{kn}$. 
We now show that the same upper bound on $\tpw(G)$ is also essentially best possible. Let $k\geq 2$ and let $G$ be the graph obtained from $P_n^{k-1}$ by adding one dominant vertex. So $\tw(G)=k$. In any tree-partition of $G$, since $G$ has a dominant vertex, the tree is a star. Since $P_n^{k-1}$ is a subgraph of $G$, we have $\tpw(G)\geq (1-o(1))\sqrt{(k-1)n}$. 

We now set out to generalise \cref{TreewidthSep} for $H$-partitions, where $H$ has bounded tree-depth. We need the following analogue of \cref{TreeSep} where $q$ is allowed to be real.

\begin{lem}
\label{TreeSepNew}
For any $p,n\in\NN$ and $q\in\mathbb{R}^+$ with $n\leq q(p+1)$, any $n$-vertex tree $T$ has a set $S$ of at most $p$ vertices such that each component of $T-S$ has at most $q$ vertices.
\end{lem}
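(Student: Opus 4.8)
The statement is the real-valued analogue of \cref{TreeSep}: we want a set $S$ of at most $p$ vertices in an $n$-vertex tree $T$ so that every component of $T-S$ has at most $q$ vertices, under the single hypothesis $n\leq q(p+1)$. The plan is to proceed by induction on $p$, mirroring the proof of \cref{TreeSep} but being careful that $q$ need not be an integer. The base case is $p=0$: then $n\leq q$, so $T$ itself has at most $q$ vertices and $S=\emptyset$ works. (Alternatively one can take $p=1$ as the base case, using the standard centroid argument: orient each edge $vw$ toward the smaller side and find a sink $v$, so that every component of $T-v$ has at most $n/2\leq q$ vertices; this works since $n\le 2q$.)

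For the inductive step, assume $p\geq 1$ and the result for $p-1$. Root $T$ at an arbitrary vertex $r$, and for each vertex $v$ let $T_v$ be the subtree rooted at $v$. If $|V(T_w)|\leq q$ for every child $w$ of $r$ — equivalently, if the quantity $f(r):=\max_w|V(T_w)|$ satisfies $f(r)\leq q$ — then $S=\{r\}$ already works, since each component of $T-r$ is some $T_w$. Otherwise, pick a vertex $v$ at maximum distance from $r$ with $f(v)\geq q$ (well-defined since $f(r)\geq q$, noting we should use the threshold ``$>q$'' vs ``$\geq q$'' consistently; since $q$ may be non-integer and $|V(T_w)|$ is an integer, ``$|V(T_w)|>q$'' is the natural cutoff). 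By maximality of the distance, some child $w$ of $v$ has $|V(T_w)|>q$ while every subtree rooted at a child of $w$ has at most $q$ vertices. Delete the subtree $T_w$ from $T$ to obtain $T'$ with $|V(T')|=n-|V(T_w)|$; since $|V(T_w)|\geq \lceil q\rceil \geq q$ (as it is an integer exceeding $q$) we get $|V(T')|\leq n-q\leq q(p+1)-q=qp$, so the hypothesis $|V(T')|\leq q((p-1)+1)$ holds. Apply induction to get $S'\subseteq V(T')$ with $|S'|\leq p-1$ such that every component of $T'-S'$ has at most $q$ vertices, and set $S:=S'\cup\{w\}$. Then $|S|\leq p$, and every component of $T-S$ is either a component of $T'-S'$, or a subtree rooted at a child of $w$ (each with at most $q$ vertices), so $S$ is as required.

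The only genuinely delicate point — and the one I'd flag as the main obstacle — is bookkeeping around whether $q$ is reached exactly. The inequality $|V(T_w)|\geq q$ needs the strict inequality $|V(T_w)|>q$ at the point where $w$ is chosen, which in turn requires choosing the threshold in the definition of the ``bad'' vertices as ``$f(v)>q$'' rather than ``$f(v)\geq q$''; with that convention the argument goes through verbatim, since an integer strictly greater than a real $q$ is at least $\lceil q\rceil\geq q$, giving $|V(T')|=n-|V(T_w)|\leq n-q\leq qp$. Everything else is a routine adaptation of \cref{TreeSep}, and no new ideas beyond the centroid/pendant-subtree trick are needed.
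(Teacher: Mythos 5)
Your proof is correct and follows essentially the same approach as the paper's: induction on $p$, with the deepest vertex $v$ having a child subtree $T_w$ of more than $q$ vertices, delete $T_w$, and induct on the remaining tree. The only (cosmetic) difference is your base case at $p=0$ versus the paper's at $p=1$, and you usefully make explicit the $>q$ versus $\geq q$ threshold issue, which the paper also handles correctly via strict inequalities.
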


\begin{proof} 
We proceed by induction on $p$. The base case with $p=1$ is identical to the $p=1$ case of \cref{TreeSep}.  Now assume that $p\geq 2$. Root $T$ at an arbitrary vertex $r$. For each vertex $v$, let $T_v$ be the subtree of $T$ rooted at $v$. For each leaf vertex $v$, let $f(v)=0$. For each non-leaf vertex $v$, let $f(v)\coloneqq  \max_w |V(T_w)|$ where the maximum is taken over all children $w$ of $v$. If $f(r)\leq q$ then $S=\{r\}$ satisfies the claim. Now assume that $f(r)> q$. Let $v$ be a vertex of $T$ at maximum distance from $r$ such that $f(v)> q$. This is well-defined since $f(r)> q$. By definition, $|V(T_w)|> q$ for some child $w$ of $v$, but $f(w)\leq q$. So every subtree rooted at a child of $w$ has at most $q$ vertices. Let $T'$ be the subtree of $T$ obtained by deleting the subtree rooted at $w$. Thus $$|V(T')|=n-|V(T_w)|< n-q \leq q(p+1)-q=qp.$$ 
By induction, $T'$ has a set $S'$ of at most $p-1$ vertices such that each component of $T'-S'$ has at most $q$ vertices. Let $S\coloneqq S'\cup\{w\}$. By construction, $|S|\leq p$ and each component of $T-S$ has at most $q$ vertices.
\end{proof}

We have the following analogue of \cref{TreewidthSep} where $p$ and $q$ are both allowed to be real. 

\begin{lem}
\label{TreewidthSepNew}
Let $k,n\in\NN$ and $p,q\in\REALS_{>0}$ with $n(k+1) \leq pq$ and $p\geq k+1$. Then any $n$-vertex graph $G$ with $\tw(G)\leq k$ has a set $S$ of at most $p$ vertices such that each component of $G-S$ has at most $q$ vertices.
\end{lem}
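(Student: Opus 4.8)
The plan is to mirror the proof of \cref{TreewidthSep}, replacing \cref{TreeSep} by its real‑valued counterpart \cref{TreeSepNew}. Since decreasing $k$ only weakens both hypotheses $n(k+1)\le pq$ and $p\ge k+1$ (and leaves the desired conclusion unchanged), I may assume $\tw(G)=k$; in particular $n\ge k+1$. First I would apply \cref{Normalisation} to obtain a tree‑decomposition $(B_x:x\in V(T))$ of $G$ with $|B_x|=k+1$ for every $x\in V(T)$, with $|V(T)|=n-k\ge 1$, and satisfying the separation property \cref{Normalisation}(d).

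Next set $p':=\floor{p/(k+1)}$. Since $p\ge k+1$ we have $p'\ge 1$, and since $\floor{p/(k+1)}+1>p/(k+1)$,
\[
q(p'+1) \;>\; \frac{qp}{k+1} \;\ge\; \frac{n(k+1)}{k+1} \;=\; n \;\ge\; n-k \;=\; |V(T)| .
\]
Hence $|V(T)|\le q(p'+1)$, so \cref{TreeSepNew} applied to the tree $T$ (which has $|V(T)|\in\NN$ vertices) with the integer $p'$ and the real $q$ yields a set $S_0\subseteq V(T)$ with $|S_0|\le p'$ such that every component of $T-S_0$ has at most $q$ vertices. Put $S:=\bigcup_{x\in S_0}B_x$; then $|S|\le(k+1)\,|S_0|\le(k+1)p'\le(k+1)\cdot\frac{p}{k+1}=p$.

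Finally I would verify that each component of $G-S$ has at most $q$ vertices. For a component $G'$ of $G-S$, the subtrees $\{x\in V(T):v\in B_x\}$ over $v\in V(G')$ each avoid $S_0$ and, along any edge of $G'$, pairwise intersect, so they all lie in a single component $T'$ of $T-S_0$. Thus $V(G')\subseteq\big(\bigcup_{x\in V(T')}B_x\big)\setminus\big(\bigcup_{x\in S_0}B_x\big)$, which by \cref{Normalisation}(d) has size at most $|V(T')|\le q$. This gives the required set $S$.

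I do not expect a genuine obstacle: the argument is structurally identical to that of \cref{TreewidthSep}. The only care needed is the bookkeeping with the real parameters $p,q$ — in particular establishing the strict inequality $|V(T)|<q(p'+1)$ so that \cref{TreeSepNew} (whose integrality requirement falls on the tree size and on $p'$, not on $q$) applies, and checking that the bound $|S|\le p$ survives the floor defining $p'$.
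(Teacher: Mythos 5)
Your proof is correct and follows essentially the same route as the paper: apply \cref{Normalisation}, set $p':=\floor{p/(k+1)}$, verify $|V(T)|\le q(p'+1)$, apply \cref{TreeSepNew} to $T$, and lift the resulting set $S_0$ to $S:=\bigcup_{x\in S_0}B_x$, using \cref{Normalisation}(d) to bound the component sizes. Your extra care in reducing to $\tw(G)=k$ (so that \cref{Normalisation} applies with $|V(T)|=n-k\ge 1$) and in checking $p'\ge 1$ merely makes explicit what the paper leaves implicit.
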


\begin{proof}
Let $(B_x:x\in V(T))$ be a tree-decomposition of $G$ satisfying \cref{Normalisation}. Let $p'\coloneqq  \floor{\frac{p}{k+1}}$. Thus $p'\in\NN$ and $(p'+1)q \geq \frac{p}{k+1} q \geq  n$. By \cref{TreeSepNew}, $T$ has a set $S_0$ of at most $p'$ vertices such that each component of $T-S_0$ has at most $q$ vertices. Let $S\coloneqq \bigcup_{x\in S_0}B_x$. So $|S|\leq (k+1)|S_0| \leq (k+1)p'\leq p$. For each component $G'$ of $G-S$ there is a subtree $T'$ of $T-S_0$ such that $V(G')\subseteq ( \bigcup_{x\in V(T')} B_x ) \setminus S$, implying $|V(G')|\leq |V(T')|\leq q$ by \cref{Normalisation}(d). 
\end{proof}

We now reach the main result of this section (where the case $\tdb=2$ in \cref{TreewidthTDproduct} is roughly equivalent to \cref{TreewidthSep}). 

\TreewidthTDproduct*

\begin{proof}
We proceed by induction on $\tdb$. With $\tdb=1$, the claim holds with $H=K_1$ and $m=n$. Now assume $\tdb\geq 2$ and the claim holds for $\tdb-1$.
Without loss of generality, we can assume that $\tw(G)=k$, and thus $n\ge k+1$.
Let $p\coloneqq (k+1)^{1-1/\tdb}n^{1/\tdb}\ge k+1$ and $q\coloneqq 
(k+1)^{1/\tdb} n^{1-1/\tdb}$. So $n(k+1)=pq$. 
By \cref{TreewidthSepNew}, there is a set $S$ of at most $p$ vertices such that each component of $G-S$ has at most $q$ vertices. Let $G_1,\dots,G_c$ be the components of $G-S$. Each $G_i$ has treewidth at most $k$. By induction, $G_i$ is contained in $H_i\boxtimes K_{m'}$ for some graph $H_i$ with $\td(H_i)\leq \tdb-1$ and $m'\leq (k+1)^{(\tdb-2)/(\tdb-1)}q^{1/(\tdb-1)}=(k+1)^{1-1/\tdb}n^{1/\tdb}$. So $H_i$ is a subgraph of the closure of a rooted tree $T_i$ of vertex-height at most $\tdb-1$. Let $T$ be the rooted tree obtained from the disjoint union of $T_1,\dots,T_c$ by adding a root vertex $r$ adjacent to the root of each $T_i$. So $\td(H)\leq\tdb$. Let $S$ be the part associated with $r$. Hence $G$ is contained in $H\boxtimes K_m$ where $m\leq\max\{|S|,m'\}\leq (k+1)^{1-1/\tdb}n^{1/\tdb}$.
\end{proof}

We now show that the dependence on $n$ in \cref{TreewidthTDproduct} is best possible. In particular, we show that if the $n$-vertex path is contained in $H\boxtimes K_m$ for some graph $H$ with $\td(H)\leq\tdb$, then $m\geq \Omega(n^{1/\tdb})$ for fixed $\tdb$. We proceed by induction on $\tdb\geq 1$ with the following hypothesis: if the $n$-vertex path $P$ is contained in $H\boxtimes K_m$ for some graph $H$ with $\td(H)\leq\tdb$, then $n\leq (2m)^\tdb$. If $m=1$ then this says that $\td(P_n)\geq \log n$, which holds~\citep{Sparsity}. Now assume that $m\geq 2$. In the base case, if $\tdb=1$ then $H=K_1$ and $n\leq m$ as desired. Now assume that $\tdb\geq 2$, and the $n$-vertex path $P$ is contained in $H\boxtimes K_m$ for some graph $H$ with $\td(H)\leq\tdb$. Let $S$ be the set of at most $m$ vertices of $G$ associated with the root vertex $r$ of $H$. Since $P-S$ has at most $|S|+1$ components, some sub-path $P'$ of $P-S$ has $n'\geq \frac{n-|S|}{|S|+1} \geq\frac{n-m}{m+1}$ vertices.
If $\frac{n-m}{m+1} < \frac{n}{2m}$, then $n < \tfrac{2m^2}{m-1}<(2m)^2\le (2m)^\tdb$, as desired.
Hence, suppose that $n'\ge \frac{n-m}{m+1} \geq \frac{n}{2m}$.
Since $P'$ is connected, $P'$ is contained in $H'\boxtimes K_m$, where $H'$ is some component of $H-r$, which implies $\td(H')\leq \tdb-1$. By induction 
 $\frac{n}{2m} \leq n'\leq (2m)^{\tdb-1}$. Hence
 $n \leq (2m)^\tdb$, as desired.

\section{Regarding \cref{mainquestion}}
\label{ReMainQuestion}

This section presents several results related to \cref{mainquestion}. First we describe two methods, shallow minors and weighted separators, that can be used to show that various graph classes satisfy \cref{mainquestion}. We then give examples of graphs that highlight the difficulty of  \cref{mainquestion}. We conclude the paper by presenting several interesting graph classes for which \cref{mainquestion} is unsolved. 

\subsection{Using Shallow Minors}

For any integer $r\geq 0$, a graph $H$ is an \defn{$r$-shallow minor} of a graph $G$ if $H$ can be obtained from $G$ by contracting pairwise-disjoint subgraphs of $G$, each with radius at most $r$, and then taking a subgraph. Let $\nabla_r(G)$ be the maximum average degree of an $r$-shallow minor of $G$. Shallow minors are helpful for attacking \cref{mainquestion}. \citet{HW21b} proved the following, where $\Delta(G)$ is the maximum degree of $G$. 

\begin{lem}[\citep{HW21b}]
\label{ShallowMinorGeneral}
For any $r\in\mathbb{N}_0$ and $\ell,t\in\mathbb{N}$, 
for any graphs $H$ and $L$ where $\tw(H)\leq t$ and $\Delta( L^r )\leq k$, 
if a graph $G$ is an $r$-shallow minor of $H \boxtimes L\boxtimes K_\ell$, then $G$ is contained in $J\boxtimes   L^{2r+1} \boxtimes   K_{\ell( k+1)}$ for some graph $J$ with $\tw(J)\leq \binom{2r+1+t}{t}-1$. 
\end{lem}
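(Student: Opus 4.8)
The plan is to unpack the shallow-minor model, replace each branch set by a single representative vertex, and exploit that distances in a strong product are the maximum of the coordinate distances. First I would record the model: since $G$ is an $r$-shallow minor of $M:=H\boxtimes L\boxtimes K_\ell$, there is a family $(B_v:v\in V(G))$ of pairwise-disjoint connected subgraphs of $M$, each of radius at most $r$, such that for every edge $uv\in E(G)$ some edge of $M$ joins $V(B_u)$ to $V(B_v)$. For each $v$ I fix a centre $c_v\in V(B_v)$ with $\dist_M(c_v,w)\le\dist_{B_v}(c_v,w)\le r$ for all $w\in V(B_v)$, and write $c_v=(h_v,x_v,i_v)\in V(H)\times V(L)\times[\ell]$. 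Along any edge of a strong product each coordinate moves by at most one step, so the projections $M\to H$ and $M\to L$ are $1$-Lipschitz; hence $\pi_H(V(B_v))$ is connected and lies within $H$-distance $r$ of $h_v$, and $\pi_L(V(B_v))$ lies within $L$-distance $r$ of $x_v$ (so $\abs{\pi_L(V(B_v))}\le\Delta(L^r)+1=k+1$).

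Second I would assemble the partition. For $uv\in E(G)$, an edge $pq\in E(M)$ with $p\in V(B_u)$ and $q\in V(B_v)$ yields $\dist_H(h_u,h_v)\le r+1+r=2r+1$ and, likewise, $\dist_L(x_u,x_v)\le 2r+1$. Let $J$ be the graph on vertex set $\{h_v:v\in V(G)\}$ with $hh'\in E(J)$ whenever $h\ne h'$ and $\{h_u,h_v\}=\{h,h'\}$ for some $uv\in E(G)$; thus $J$ is a subgraph of $H^{2r+1}$. Reading $V(L^{2r+1})=V(L)$, the map $v\mapsto(h_v,x_v)$ sends each edge of $G$ to a pair of equal-or-adjacent vertices of $J\boxtimes L^{2r+1}$, so it is a $(J\boxtimes L^{2r+1})$-partition of $G$; a class of it is the set of all $v$ with a prescribed value of $(h_v,x_v)$, which has size at most $\ell$ because the $c_v$ are distinct and can agree on the remaining $K_\ell$-coordinate at most $\ell$ times. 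By \cref{ProductPartition} this gives $G\subseteq J\boxtimes L^{2r+1}\boxtimes K_\ell\subseteq J\boxtimes L^{2r+1}\boxtimes K_{\ell(k+1)}$. (The slack factor $k+1$ is exactly what one is forced to pay if, instead of choosing a centre, one records an arbitrary vertex of $B_v$ and compensates by bucketing its $L$-coordinate over the radius-$r$ ball it belongs to, which has size at most $k+1$ by the hypothesis on $\Delta(L^r)$.)

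The remaining step, the bound $\tw(J)\le\binom{2r+1+t}{t}-1$, is the hard one and the reason the lemma is not trivial: it cannot follow from $J\subseteq H^{2r+1}$ alone, since powers of bounded-treewidth graphs can have unbounded treewidth (for instance $K_{1,n}^2=K_{n+1}$), so the pairwise disjointness of the $B_v$ must be used. I would start from a tree-decomposition $(W_x:x\in V(T))$ of $H$ of width at most $t$, normalised as in \cref{Normalisation} so that consecutive bags differ in a single vertex; for $h\in V(J)$ set $\mathcal{Q}_h:=\bigcup\{\pi_H(V(B_v)):h_v=h\}$, which is connected, contains $h$, and lies within $H$-distance $r$ of $h$, and note that an edge $hh'\in E(J)$ forces $\mathcal{Q}_h$ and $\mathcal{Q}_{h'}$ to be within $H$-distance $1$. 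The goal is to convert this into a tree-decomposition of $J$ over a footprint-guided refinement of $T$ with bags of size at most $\binom{2r+1+t}{t}$. A first attempt — taking the intersection graph of the footprint subtrees $\{x\in V(T):W_x\cap\mathcal{Q}_h\ne\emptyset\}$ — already exhibits $J$ as a subgraph of a chordal graph, but the clique bound it gives is too weak when $H$ has large degree, so the real work is a more careful construction together with a counting argument that uses the one-new-vertex-per-$T$-edge normalisation and the disjointness of the branch sets to extract the \emph{polynomial}-in-$r$ bound $\binom{2r+1+t}{t}$ rather than a crude exponential-in-$r$ one. I expect this last construction-and-counting step to be the main obstacle.
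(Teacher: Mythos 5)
The paper does not prove this lemma; it is stated with a citation to \citep{HW21b} and used as a black box, so there is no local proof to compare against. That said, the first two-thirds of your argument is correct and self-contained: fixing a centre $c_v=(h_v,x_v,i_v)$ of each branch set, observing that the coordinate projections of a strong product are $1$-Lipschitz, and bucketing $v\mapsto(h_v,x_v)$ does give a $(J\boxtimes L^{2r+1})$-partition of $G$ of width at most $\ell$ (two vertices in the same bucket have distinct centres agreeing in the first two coordinates, hence differing in the $K_\ell$-coordinate), so $G\subseteq J\boxtimes L^{2r+1}\boxtimes K_\ell\subseteq J\boxtimes L^{2r+1}\boxtimes K_{\ell(k+1)}$. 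Your aside about where the factor $k+1$ would come from if one did not anchor at a centre is a fair reading of the stated bound.

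The genuine gap is the one you flag yourself: nothing in the proposal establishes $\tw(J)\leq\binom{2r+1+t}{t}-1$, and that inequality is the entire content of the lemma --- without it the claim is essentially trivial from $J\subseteq H^{2r+1}$, which (as you note with the $K_{1,n}$ example) says nothing about treewidth. Your sketch via the sets $\mathcal{Q}_h$ and their footprint subtrees in a normalised tree-decomposition of $H$ only exhibits $J$ inside a chordal supergraph whose clique number is the maximum number of $h$ with $\mathcal{Q}_h$ meeting a fixed bag; as you observe, that quantity scales with the degree of $H$ rather than with $t$ and $r$, so it cannot deliver the bound. Moreover, the disjointness of the $B_v$ holds in $H\boxtimes L\boxtimes K_\ell$, not in $H$, so distinct branch sets can project to heavily overlapping, even identical, footprints in $H$; the $\mathcal{Q}_h$ are not disjoint, which is exactly why the naive counting collapses. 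The missing idea is the combinatorial argument converting that three-coordinate disjointness and the one-new-vertex-per-edge normalisation into the bound $\binom{2r+1+t}{t}$; ``footprint-guided refinement of $T$ plus a counting argument'' is a direction, not a proof, and that step needs to be carried out or cited explicitly before the lemma can be considered established.
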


\cref{ShallowMinorGeneral} with $L=K_1$ and $k=0$ implies:

\begin{cor}
\label{ShallowMinor}
For any graph $H$ with $\tw(H)\leq t$, for any $r\in\mathbb{N}_0$ and $\ell\in\mathbb{N}$, if a graph $G$ is an $r$-shallow minor of $H \boxtimes K_\ell$, then $G$ is contained in $J\boxtimes K_{\ell}$ for some graph $J$ with $\tw(J) \leq \binom{2r+1+t}{t}-1$.
\end{cor}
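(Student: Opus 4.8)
The plan is to obtain \cref{ShallowMinor} as the specialisation of \cref{ShallowMinorGeneral} in which $L=K_1$ and $k=0$. First I would record two elementary facts about the strong product: for any graph $A$ we have $A\boxtimes K_1\cong A$ (so $K_1$ is the identity for $\boxtimes$), and for every $r\in\NN_0$ we have $K_1^r=K_1$, hence $\Delta(K_1^r)=0$.

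Next I would verify that the hypotheses of \cref{ShallowMinorGeneral} are met with these choices. Given a graph $G$ that is an $r$-shallow minor of $H\boxtimes K_\ell$ with $\tw(H)\le t$, the first fact gives $H\boxtimes K_\ell\cong H\boxtimes K_1\boxtimes K_\ell$, so $G$ is an $r$-shallow minor of $H\boxtimes K_1\boxtimes K_\ell$; the second fact says that $\Delta(L^r)\le k$ holds with $L=K_1$ and $k=0$. Then I would simply invoke \cref{ShallowMinorGeneral}: it produces a graph $J$ with $\tw(J)\le\binom{2r+1+t}{t}-1$ such that $G$ is contained in $J\boxtimes K_1^{2r+1}\boxtimes K_{\ell(k+1)}$, and using $K_1^{2r+1}=K_1$, $k+1=1$, and again $A\boxtimes K_1\cong A$, this is exactly $J\boxtimes K_\ell$, as required.

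I do not expect any real obstacle here: the whole argument is a one-line reduction, and the only points worth a sentence are the two product identities above and---if one reads $t\in\NN$ strictly---the harmless remark that when $\tw(H)=0$ one may instead take $t=1$, which only weakens the bound on $\tw(J)$. All of the substance is contained in \cref{ShallowMinorGeneral}.
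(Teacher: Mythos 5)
Your proof is correct and matches the paper's proof exactly: the paper also obtains \cref{ShallowMinor} as the specialisation of \cref{ShallowMinorGeneral} with $L=K_1$ and $k=0$. Your only addition is to spell out the product identities $A\boxtimes K_1\cong A$ and $K_1^r=K_1$, which the paper leaves implicit.
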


\cref{ShallowMinor} essentially says that shallow minors of graphs that satisfy \cref{mainquestion} also satisfy \cref{mainquestion}. We now give two examples of this approach.

A graph $G$ is \defn{$(g,k)$-planar} if there is a drawing of $G$ in a surface of Euler genus $g$ with at most $k$ crossings on each edge (assuming no three edges cross at a single point). 
\begin{prop}
Every $n$-vertex $(g,k)$-planar graph $G$ is contained in $L\boxtimes K_{\ell}$ for some graph $L$ with $\tw(L) \leq \binom{k+5}{3}-1$, where $\ell
\leq 8\sqrt{(g+1)(1+k^{3/2}d_g) n}$ and $d_g\coloneqq \max\{3,\frac14( 5+\sqrt{24g+1})\}$. 
\end{prop}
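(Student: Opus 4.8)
The plan is to realise a $(g,k)$-planar graph $G$ as a shallow minor of a graph with a nice product structure, and then apply \cref{ShallowMinor}. The natural candidate for the ``host'' graph is the \emph{planarisation} $G^\times$ of the drawing of $G$: replace each crossing point by a dummy vertex, so that $G^\times$ is embedded in the surface of Euler genus $g$ without crossings. Each edge of $G$ carrying $j\le k$ crossings becomes a path of length $j+1$ in $G^\times$, so $G$ is obtained from $G^\times$ by contracting, for each edge of $G$, a subpath of length at most $k$ — that is, $G$ is a $\lceil k/2\rceil$-shallow minor of $G^\times$. (One can do slightly better: each original edge is recovered by contracting a connected subgraph of radius at most $\lceil k/2\rceil$, and in fact the shallow-minor depth needed is $r:=\lceil k/2\rceil$; I would double-check this constant against the exponent $\binom{k+5}{3}$ claimed, since $\binom{2r+1+3}{3}$ with $r=\lceil k/2\rceil$ gives $\binom{k+5}{3}$ when $k$ is odd, so the statement is using $r=\lceil k/2\rceil$ and $t=3$.)

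Next I would control the size and structure of $G^\times$. The number of crossings in the drawing is at most $\frac{k}{2}|E(G)|$, and $|E(G)|$ is bounded because $G$ has Euler genus at most $g$ plus few crossings: a standard Euler-formula/charging argument (this is where $d_g:=\max\{3,\tfrac14(5+\sqrt{24g+1})\}$ enters — it is the usual bound on the average degree of a graph of Euler genus $g$) gives $|E(G)|\le d_g\cdot n$, hence at most $\tfrac{k}{2}d_g n$ crossing-vertices, so $|V(G^\times)|\le (1+\tfrac{k}{2}d_g)n \le (1+k^{3/2}d_g)n$ — the exponent $3/2$ on $k$ is a deliberate over-estimate absorbing the $\tfrac12$ and leaving room. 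Since $G^\times$ embeds in a surface of Euler genus $g$ with no crossings, \cref{ISW}(b) applies: $G^\times$ is contained in $H\boxtimes K_{\ell_0}$ for some graph $H$ with $\tw(H)\le 3$ and $\ell_0\le 4\sqrt{(g+1)|V(G^\times)|}\le 4\sqrt{(g+1)(1+k^{3/2}d_g)n}$.

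Finally, $G$ is an $r$-shallow minor of $G^\times$, which is a subgraph of $H\boxtimes K_{\ell_0}$ with $\tw(H)\le 3$, so $G$ is an $r$-shallow minor of $H\boxtimes K_{\ell_0}$. Apply \cref{ShallowMinor} with $t=3$: $G$ is contained in $L\boxtimes K_{\ell_0}$ for some graph $L$ with $\tw(L)\le \binom{2r+1+3}{3}-1 = \binom{k+5}{3}-1$ (taking $r=\lceil k/2\rceil$), and $\ell := \ell_0 \le 8\sqrt{(g+1)(1+k^{3/2}d_g)n}$ after absorbing the constant $4$ into $8$ (the slack covers rounding in $r$ and the crossing count). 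The only genuinely delicate point is bookkeeping the constants so that the stated bounds $\binom{k+5}{3}-1$ and $8\sqrt{(g+1)(1+k^{3/2}d_g)n}$ are actually met — in particular verifying that contracting each original edge-path is legitimately captured by a single shallow-minor operation of depth $\lceil k/2\rceil$ (so the depths of the contracted subgraphs do not compound across edges, which they do not, since the subgraphs for distinct edges of $G$ are vertex-disjoint in $G^\times$). Everything else is a routine combination of the planarisation trick, the Euler-genus edge bound, \cref{ISW}(b), and \cref{ShallowMinor}.
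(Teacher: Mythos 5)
Your high-level plan (planarise, bound the crossing count, invoke \cref{ISW}(b), then pull back via \cref{ShallowMinor}) matches the paper's approach, but two of your key steps are wrong.

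First, the claim that $G$ is a $\lceil k/2\rceil$-shallow minor of the planarisation $G^\times$ fails, and so does the verification you sketched: the branch-subgraphs for distinct edges of $G$ are \emph{not} vertex-disjoint in $G^\times$, because each crossing vertex $c$ lies on two edges of $G$, say $e_1=uv$ and $e_2=xy$. In a shallow-minor model the sets $B_u,B_v,B_x,B_y$ must be pairwise disjoint, so $c$ can serve at most one of the two edges; for the other edge the two endpoint blobs are then separated along $e_2$ by a vertex they cannot use, and that edge is not recovered in the minor. The paper instead cites \citet{HW21b}: $G$ is a $\lceil k/2\rceil$-shallow minor of $G^\times \boxtimes K_2$, where the $K_2$ factor supplies two disjoint copies of each vertex so that a crossing vertex can be shared between its two edges. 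This factor of $2$ is exactly the source of the leading $8 = 2\cdot 4$ in the bound on $\ell$; you cannot obtain $\ell \leq 4\sqrt{\cdots}$ as you do and then "absorb" $4$ into $8$ as slack.

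Second, your edge bound is unjustified. You assert $|E(G)|\leq d_g n$ by "a standard Euler-formula argument" because "$G$ has Euler genus at most $g$" — but $G$ is $(g,k)$-planar, not of Euler genus $g$, and for $k\geq 1$ the genus-$g$ edge bound does not apply to $G$. Consequently your crossing bound $C\leq \tfrac{k}{2}d_g n$ and the claim that $k^{3/2}$ is a "deliberate over-estimate absorbing the $\tfrac12$" are both incorrect. The paper's argument uses the generalised Crossing Lemma of \citet[Lemma 4.5]{OOW19}: if $m:=|E(G)|>2d_g n$ then $C\geq \frac{m^3}{8(d_g n)^2}$; combined with $C\leq\frac{k}{2}m$ this gives $m\leq 2\sqrt{k}\,d_g n$ and hence $C\leq k^{3/2} d_g n$. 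The exponent $3/2$ on $k$ is genuinely forced by this trade-off, not an artefact of generous rounding.
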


\begin{proof}
\cref{ISW}(b) establishes the $k=0$ case. Now assume that $k\geq 1$. Fix a drawing of $G$ in a surface of Euler genus $g$ with at most $k$ crossings on each edge. Let $C$ be the total number of crossings. Let $m\coloneqq |E(G)|$. So $C \leq \frac{k}{2}m$. 
\citet[Lemma~4.5]{OOW19} proved the following generalisation of the Crossing Lemma: if $m>2d_g n$ then $C\geq \frac{m^3}{8 (d_gn)^2}$. Assume for the time being that $m>2d_g n$.
Thus
$ \frac{m^3}{8 (d_gn)^2} \leq C \leq \frac{k}{2}m$, implying
$ m^2 \leq 4k (d_gn)^2$ and
$ m \leq 2\sqrt{k} d_gn$.
Thus $m \leq \max\{ 2d_g ,2\sqrt{k} d_g \} n = 2\sqrt{k} d_gn$.
Hence $C \leq \frac{k}{2}m \leq k^{3/2} d_g  n$. 
\citet{HW21b} showed that $G$ is a $\ceil{\frac{k}{2}}$-shallow minor of $H\boxtimes K_2$, where $H$ is the graph of Euler genus $g$ obtained from $G$ by adding a vertex at each crossing point. Hence $|V(H)|\leq n+C <  (1+k^{3/2}d_g) n$. By \cref{ISW}(b), $H$ is contained in $J\boxtimes K_{\ell'}$, for some graph $J$ with $\tw(J)\leq 3$, where $\ell'\leq 4\sqrt{(g+1)|V(H)|} \leq 4\sqrt{(g+1)(1+k^{3/2}d_g) n}$. Hence $G$ is a $\ceil{\frac{k}{2}}$-shallow minor of $J\boxtimes K_{2\ell'}$. By \cref{ShallowMinor} with $t=3$ and $r=\ceil{\frac{k}{2}}$, we have that $G$ is contained in $L\boxtimes K_{2\ell'}$ for some graph $L$ with $\tw(L) \leq \binom{k+5}{3}-1$.
\end{proof}

Here is a second  example. A graph $G$ is \defn{fan-planar} if there is a drawing of $G$ in the plane such that for each edge $e \in E(G)$ the edges that cross $e$ have a common end-vertex and they cross $e$ from the same side (when directed away from their common end-vertex) \citep{BG20,KU22}. 

\begin{cor}
Every $n$-vertex fan-planar graph $G$ is contained in $J\boxtimes K_m$ where $\tw(J)\leq 19$ and $m<29\sqrt{n}$. 
\end{cor}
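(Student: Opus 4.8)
The plan is to reduce the fan-planar case to the planar case via shallow minors, exactly as in the $(g,k)$-planar proposition above: realise every fan-planar graph as a bounded-depth shallow minor of a planar graph on $O(n)$ vertices, apply \cref{ISW}(a) to that planar host, and then use \cref{ShallowMinor} to absorb the shallow-minor operation into the treewidth of a new host graph.

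First I would invoke the structural result of \citet{HW21b} on fan-planar graphs: every $n$-vertex fan-planar graph $G$ is a $1$-shallow minor of $H\boxtimes K_c$ for some planar graph $H$ with $|V(H)|$ at most a small constant multiple of $n$, where $c$ is an absolute constant. (The subtlety here is that, although in a fan-planar drawing a single edge may be crossed arbitrarily many times, the edges crossing a given edge form a fan at a common vertex; this is what allows a shallow-minor model of depth $1$ in the blow-up by a small complete graph, rather than the depth-$\Theta(\text{crossings per edge})$ model that the naive planarisation would produce. The $5n-10$ bound on $|E(G)|$ for fan-planar graphs keeps $|V(H)|$ linear.) Given this, apply \cref{ISW}(a) to $H$: then $H$ is contained in $J'\boxtimes K_\ell$ for some graph $J'$ with $\tw(J')\le 3$ and $\ell\le\sqrt{8\,|V(H)|}$. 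Hence $G$ is a $1$-shallow minor of $J'\boxtimes K_{c\ell}$. Now apply \cref{ShallowMinor} with $t=3$ and $r=1$: $G$ is contained in $J\boxtimes K_{c\ell}$ for some graph $J$ with $\tw(J)\le\binom{2\cdot 1+1+3}{3}-1=\binom{6}{3}-1=19$, which is exactly the claimed treewidth bound.

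It remains to check the blow-up size: $m=c\ell\le c\sqrt{8\,|V(H)|}$, and since $|V(H)|$ is a small constant multiple of $n$ and $c$ is a small constant, the constants work out to $m<29\sqrt n$ (there is considerable slack, as $29^2/8>100$). The main obstacle is the first step: pinning down — or carefully quoting from \citet{HW21b} — a shallow-minor representation of fan-planar graphs of shallow-minor depth exactly $1$ (so that the exponent $2r+1+t=6$ in \cref{ShallowMinor} yields the bound $19$) on a planar host with only linearly many vertices and a small constant $c$ (so that the final count stays below $29\sqrt n$); everything after that is routine bookkeeping with \cref{ISW} and \cref{ShallowMinor}.
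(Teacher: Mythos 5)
Your proposal follows the paper's proof essentially verbatim: the paper cites \citet{KU22} for $|E(G)|<5n$, then uses the result of \citet{HW21b} that $G$ is a $1$-shallow minor of $H\boxtimes K_3$ for a planar graph $H$ with $|V(H)|\leq n+2|E(G)|<11n$, applies \cref{ISW} to get $H$ contained in $J'\boxtimes K_\ell$ with $\tw(J')\leq 3$ and $\ell\leq 2\sqrt{2\cdot 11n}$, and finishes with \cref{ShallowMinor} (so $c=3$, giving $m\leq 3\ell\leq 6\sqrt{22n}<29\sqrt{n}$). The ``main obstacle'' you flag is thus already settled in the form the proof needs, and your constant bookkeeping is correct.
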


\begin{proof}
\citet{KU22} proved that $G$ has less than $5n$ edges. 
\citet{HW21b} showed that $G$ is a 1-shallow minor of $H\boxtimes K_3$ for some planar graph $H$ with $|V(H)|\leq |V(G)|+2|E(G)|<11n$. By \cref{ISW}(b), $H$ is contained in $J\boxtimes K_{m'}$, for some graph $J$ with $\tw(J)\leq 3$, where $m'\leq 2\sqrt{2|V(H)|} \leq 2\sqrt{22n}<\tfrac{29}{3}\sqrt{n}$. Thus $G$ is a 1-shallow minor of $J\boxtimes K_{3m'}$. By \cref{ShallowMinor} with $t=3$, $G$ is contained in $J\boxtimes K_{3m'}$ for some graph $J$ with $\tw(J) \leq \binom{6}{3}-1=19$.
\end{proof}

\subsection{Using Weighted Separators}

The following definition and lemma provides another way to show that various graph classes satisfy \cref{mainquestion}. A graph $J$ is \defn{$(n,m)$-separable} if for every vertex-weighting of $J$ with non-negative real-valued weights and with total weight $n$, there is a set $S\subseteq V(J)$ with total weight $m$, such that each component of $J-S$ has at most $m$ vertices. There are numerous results about weighted separators in the literature, but most of these consider the total weight of each component of $J-S$ instead of the weight of $S$ itself. Such considerations are studied in depth by \citet{Dvorak22}.

\begin{lem}
\label{Separable}
For any graph $H$ and any $(n,m)$-separable graph $J$, if $G$ is any $n$-vertex graph contained in $H\boxtimes J$, then $G$ is contained in $L\boxtimes K_m$ for some graph $L$ with $\tw(L)\leq\tw(H)+1$. 
\end{lem}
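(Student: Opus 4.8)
The plan is to unpack $G\subseteq H\boxtimes J$ into coordinate maps, feed a weighting of $J$ derived from $G$ into the $(n,m)$-separability hypothesis, and then build $L$ as a disjoint union of copies of $H$ with one extra universal vertex to absorb the separator.

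First I would fix an embedding of $G$ as a subgraph of $H\boxtimes J$, i.e.\ an injective adjacency-preserving map $\phi\colon V(G)\to V(H)\times V(J)$, and write $\phi(v)=(h(v),j(v))$. From the definition of the strong product, every edge $uv\in E(G)$ satisfies (i) $h(u)=h(v)$ or $h(u)h(v)\in E(H)$, and (ii) $j(u)=j(v)$ or $j(u)j(v)\in E(J)$; also $j$ is injective on each fibre $h^{-1}(a)$, since $\phi$ is injective. Property (i) says that $(h^{-1}(a):a\in V(H))$ is an $H$-partition of $G$; property (ii) is the engine of the argument.

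Next I would apply $(n,m)$-separability to the weighting $w(x):=|j^{-1}(x)|$ of $J$, which has total weight $|V(G)|=n$, obtaining $S\subseteq V(J)$ with $\sum_{x\in S}w(x)\le m$ and every component of $J-S$ on at most $m$ vertices. Put $G_S:=j^{-1}(S)$, so $|G_S|\le m$, and $G':=G-G_S$. By property (ii), $G'$ has no edge between $j^{-1}(V(C))$ and $j^{-1}(V(C'))$ for distinct components $C,C'$ of $J-S$ (such an edge would need $j(u)j(v)\in E(J)$ with its ends in different components of $J-S$). Hence $G'$ is the disjoint union of the graphs $G_C:=G[j^{-1}(V(C))]$. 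For each $C$, restricting $\phi$ embeds $G_C$ into $H\boxtimes J[V(C)]\subseteq H\boxtimes K_m$ because $|V(C)|\le m$; equivalently, $h$ restricts to an $H$-partition of $G_C$ in which every part has at most $|V(C)|\le m$ vertices. Combining these over all $C$, and using that the $G_C$ are pairwise non-adjacent, $G'$ has an $L_0$-partition of width at most $m$, where $L_0$ is the disjoint union of one copy of $H$ per component of $J-S$; in particular $\tw(L_0)\le\tw(H)$.

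Finally I would reinstate $G_S$. Let $L$ be $L_0$ with a new vertex made adjacent to all of $V(L_0)$, so that $\tw(L)\le\tw(L_0)+1\le\tw(H)+1$ (add the new vertex to every bag of an optimal tree-decomposition of $L_0$). Extend the $L_0$-partition of $G'$ to an $L$-partition of $G$ by assigning $G_S$ (of size at most $m$) to the new vertex's part. Edges of $G$ inside $G'$ are accommodated since $L_0\subseteq L$, and edges meeting $G_S$ since the new vertex is universal in $L$; so this is an $L$-partition of $G$ of width at most $m$, and \cref{ProductPartition} yields $G\subseteq L\boxtimes K_m$. I expect the only delicate point to be the ``no edges between components of $J-S$'' claim in the third step; once that is nailed down, $G'$ literally decomposes as a disjoint union and the rest is routine bookkeeping.
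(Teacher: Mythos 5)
Your proposal is correct and follows essentially the same route as the paper's proof: weight each vertex of $J$ by the size of its $G$-fibre, apply $(n,m)$-separability to obtain $S$, take $L$ to be a disjoint union of copies of $H$ (one per component of $J-S$) together with a dominant vertex receiving $j^{-1}(S)$, and check widths. The injectivity observation you flag (that $j$ is injective on each $h$-fibre, so each part has size at most the component size) is exactly the point the paper uses to bound the width.
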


\begin{proof}
We may assume that $G$ is a subgraph of $H\boxtimes J$. So each vertex of $G$ is of the form $(x,y)$ where $x\in V(H)$ and $y\in V(J)$. Weight each vertex $y$ of $J$ by the number of vertices $x\in V(H)$ such that $(x,y)$ is in $G$. So the total weight is $n$. By assumption, there is a set $S\subseteq V(J)$ with total weight $m$, where each component of $J-S$ has at most $m$ vertices. Let $J_1,\dots,J_t$ be the components of $J-S$. Let $H_1,\dots,H_{t}$ be disjoint copies of $H$. Let $L$ be obtained from $H_1\cup\dots\cup H_{t}$ by adding one dominant vertex $z$. Note that $\tw(L) \leq \tw(H)+1$. For each vertex $x$ of $H$, let $x_i$ be the copy of $x$ in $H_i$. 

We now define a partition of $V(G)$ indexed by $V(L)$. Let $V_z\coloneqq \{(x,y)\in V(G):x\in V(H),y\in S\}$. So $|V_z|= \text{weight}(S)\leq m$. For each vertex $x_i$ of $L-z$, let $V_{x_i}\coloneqq \{(x,y)\in V(G): y \in V(J_i)\}$. So $|V_{x_i}| \leq |V(J_i)|\leq m$. Let $\PP\coloneqq \{V_z\}\cup\{V_{x_i}: x\in V(L),i\in\{1,\dots,t\}\}$.
By construction, $\PP$ is a partition of $V(G)$, and each part of $\PP$ has size at most $m$. 

We now verify that $\PP$ is an $L$-partition of $G$. Consider an edge $vv'$ of $G$. The goal is to show that $vv'$ `maps' to a vertex or edge of $L$. If $v\in V_z$ or $v'\in V_z$ then $vv'$ maps to a vertex or edge of $L$ (since $z$ is dominant in $L$). Otherwise,  $v\in V_{x_i}$ and $v'\in V_{x'_j}$ for some $x,x'\in V(H)$. By the definition of $V_{x_i}$, we have $v=(x,y)$ for some $y\in V(J_i)$. Similarly, $v'=(x',y')$ for some $y'\in V(J_j)$. Since $vv'\in E(G)$, we have $x=x'$ or $xx'\in E(H)$, and $y=y'$ or $yy'\in E(J-S)$. If $yy'\in E(J-S)$, then $y$ and $y'$ are in the same component of $J-S$, implying $i=j$. If $y=y'$ then $i=j$ as well. In both cases, $x_i=x'_j$ or $x_ix'_j\in E(L)$. Hence, $vv'$ maps to a vertex or edge of $L$. 

This shows that $\PP$ is an $L$-partition of $G$ with width at most $m$. By \cref{ProductPartition}, $G$ is contained in $L\boxtimes K_{m}$.
\end{proof}

Note that \cref{Separable} holds even when $J$ is only $(n,m)$-separable with integer-valued weights.
Our goal now is to find graphs that are $(n,m)$-separable where $m\in O(n^{1-\epsilon})$. 

\begin{lem}
\label{PathKcSeparable}
For every path $P$ and $c,n\in\NN$, the graph $G\coloneqq  P\boxtimes K_c$ is $(n,\sqrt{cn})$-separable. 
\end{lem}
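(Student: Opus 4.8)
The plan is to exploit the linear order on $P$ and delete a periodic set of ``columns'' of $G=P\boxtimes K_c$. First I would fix notation: write $P=p_1p_2\cdots p_N$ and, for $i\in\{1,\dots,N\}$, call $C_i:=\{p_i\}\times V(K_c)$ the $i$-th \emph{column}, which has exactly $c$ vertices. Given any weighting of $V(G)$ of total weight $n$, let $w_i$ be the total weight of $C_i$, so $\sum_{i=1}^N w_i=n$. The structural observation to record first is that if $S$ is any union of columns, then each component of $G-S$ has the form $P'\boxtimes K_c$ for a maximal run $P'$ of surviving consecutive columns; hence if every such run consists of at most $\ell$ columns, then every component of $G-S$ has at most $\ell c$ vertices.

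Next I would choose the period. Set $q:=\ceil{\sqrt{n/c}}$ (the trivial case $n=0$ is handled by $S:=V(G)$, so assume $n\ge 1$, whence $q\ge 1$). For each residue $r\in\{0,1,\dots,q-1\}$ let $S_r:=\bigcup\{C_i:i\equiv r\pmod q\}$; deleting $S_r$ leaves every run of surviving consecutive columns of length at most $q-1$. Since each column lies in exactly one $S_r$, we have $\sum_{r=0}^{q-1}w(S_r)=\sum_i w_i=n$, so by averaging some residue $r$ satisfies $w(S_r)\le n/q$. I would then take $S:=S_r$ for such an $r$ and check the two required bounds: $w(S)\le n/q\le\sqrt{cn}$, using $q\ge\sqrt{n/c}$; and, by the observation above, every component of $G-S$ has at most $(q-1)c<\sqrt{cn}$ vertices, using $q-1<\sqrt{n/c}$ (which holds since $q=\ceil{\sqrt{n/c}}<\sqrt{n/c}+1$). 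The degenerate case $q=1$, which is precisely $n\le c$, just says $S=V(G)$ and $G-S$ is empty, while $w(S)=n\le\sqrt{cn}$. This gives that $G$ is $(n,\sqrt{cn})$-separable.

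I do not expect a genuinely hard step here. The only point with real content is the choice of the period $q$, which must balance the two competing demands on $\sqrt{cn}$ — the weight $n/q$ of the deleted columns against the vertex count $(q-1)c<qc$ of the surviving pieces — and these balance at $q\asymp\sqrt{n/c}$; taking the ceiling makes both estimates go through simultaneously. The remaining work is routine inequality checking and the small-case bookkeeping ($q=1$, short paths, $n=0$), all of which the averaging argument absorbs automatically.
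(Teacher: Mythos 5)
Your proposal is correct and follows essentially the same approach as the paper's proof: partition the columns by residue class modulo $\lceil\sqrt{n/c}\,\rceil$, pick the residue class of least total weight by averaging, and check that both the weight of the deleted set and the size of each surviving run are at most $\sqrt{cn}$. The only differences are notational (residues $0,\dots,q-1$ rather than $1,\dots,m$) and the extra handling of degenerate cases, which the paper leaves implicit.
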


\begin{proof}
Say $P=(v_1,v_2,\dots)$ and $V(K_c)=\{1,\dots,c\}$. 
Assume the vertices of $G$ are assigned non-negative weights, with total weight $n$. 
Let $m\coloneqq \ceil{\sqrt{n/c}}$. 
For $i\in\{1,\dots,m\}$, let $S_i\coloneqq  \{(v_j,\ell):j\equiv i\pmod{m},\ell\in\{1,\dots,c\}\}$. So $S_1,\dots,S_m$ is   partition of $V(G)$. Thus $S_{i^\star}$ has total weight at most $\frac{n}{m}$ for some $i^\star\in\{1,\dots,m\}$. Each component of $G-S_{i^\star}$ has at most $c(m-1)$ vertices. The result follows since 
$\frac{n}{m}
\leq
\sqrt{cn}$ and 
$c(m-1) < \sqrt{cn}$.
\end{proof}

\cref{Separable,PathKcSeparable} together imply:

\begin{lem}
\label{RowTreewidthGen}
For any graph $H$, path $P$ and $c\in\NN$, if $G$ is any $n$-vertex graph contained in $H\boxtimes P\boxtimes K_c$, then $G$ is contained in $L\boxtimes K_m$ for some graph $L$ with $\tw(L)\leq\tw(H)+1$, where $m\leq\sqrt{cn}$. 
\end{lem}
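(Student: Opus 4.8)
The plan is simply to chain \cref{Separable} with \cref{PathKcSeparable}. Since the strong product is associative and commutative, the hypothesis that $G$ is contained in $H\boxtimes P\boxtimes K_c$ is the same as saying $G$ is contained in $H\boxtimes J$, where $J:=P\boxtimes K_c$. So the only thing to feed into \cref{Separable} is a separability statement for $J$, and this is exactly what \cref{PathKcSeparable} provides: $J=P\boxtimes K_c$ is $(n,\sqrt{cn})$-separable.

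Concretely, I would: (i) set $J:=P\boxtimes K_c$ and invoke \cref{PathKcSeparable} to conclude that $J$ is $(n,\sqrt{cn})$-separable; (ii) apply \cref{Separable} to the $n$-vertex graph $G$, which is contained in $H\boxtimes J$, with the value $m:=\sqrt{cn}$ (or $m:=\floor{\sqrt{cn}}$ if one prefers $m\in\NN$, since the bounds $\mathrm{weight}(S)\le n/\ceil{\sqrt{n/c}}\le\sqrt{cn}$ and component-size $<\sqrt{cn}$ established inside the proof of \cref{PathKcSeparable} still hold); (iii) read off the conclusion of \cref{Separable}: $G$ is contained in $L\boxtimes K_m$ for some graph $L$ with $\tw(L)\le\tw(H)+1$, where $m\le\sqrt{cn}$, as required.

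I do not expect a genuine obstacle here — the statement is essentially a packaging of the two preceding lemmas. The only points that warrant a sentence of care are the (trivial) use of associativity/commutativity of $\boxtimes$ to rewrite $H\boxtimes P\boxtimes K_c$ as $H\boxtimes(P\boxtimes K_c)$, and the bookkeeping around whether $m$ should be $\sqrt{cn}$ exactly or its floor; neither affects the argument. In particular, the weighting used inside the proof of \cref{Separable} is integer-valued, so the remark following \cref{Separable} already guarantees that the $(n,\sqrt{cn})$-separability of $P\boxtimes K_c$ supplied by \cref{PathKcSeparable} is strong enough to apply.
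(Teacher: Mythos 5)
Your proposal is correct and is exactly the paper's argument: the paper presents \cref{RowTreewidthGen} with the one-line justification that \cref{Separable} and \cref{PathKcSeparable} together imply it, which is precisely the chaining you carry out.
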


Several recent results show that certain graphs $G$ are contained in $H\boxtimes P\boxtimes K_c$, for some path $P$ and graph $H$ with bounded treewidth~\citep{DJMMUW20,UWY22,DHHW22,ISW,DMW23,HW21b}. In all these cases, \cref{RowTreewidthGen} is applicable, implying that $G$ is contained in $L\boxtimes K_{O(\sqrt{n})}$, for some graph $L$ with bounded tree-width. 

We give one example: map graphs. Start with a graph $G$ embedded without crossings in a surface of Euler genus $g$, with each face labelled a `nation' or a `lake', where each vertex of $G$ is incident with at most $d$ nations. Let $M$ be the graph whose vertices are the nations of $G$, where two vertices are adjacent in $G$ if the corresponding faces in $G$ share a vertex. Then $M$ is called a \defn{$(g,d)$-map graph}. Since the graphs of Euler genus $g$ are precisely the $(g,3)$-map graphs~\citep{DEW17}, map graphs are a natural generalisation of graphs embeddable in surfaces. \citet{DHHW22} proved that any $(g,d)$-map graph is contained in  $H\boxtimes P\boxtimes K_\ell$ for some planar graph $H$ with treewidth 3 and for some path $P$, where $\ell=\max\{2g\floor{\frac{d}{2}},d+3\floor{\frac{d}{2}}-3\}$. The next result thus follows from \cref{RowTreewidthGen}.

\begin{prop}
\label{MapGraphs}
Every $n$-vertex $(g,d)$-map graph $G$ is contained in $H\boxtimes K_m$ for some apex graph $H$ with $\tw(H)\leq 4$, where $m\leq\sqrt{\ell n}$ and $\ell\coloneqq \max\{2g\floor{\frac{d}{2}},d+3\floor{\frac{d}{2}}-3\}$.
\end{prop}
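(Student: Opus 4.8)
The plan is to combine the product-structure theorem for map graphs due to Dujmovi\'c, Hickingbotham, Hodor and Wood with \cref{RowTreewidthGen}. First I would recall the input: \citet{DHHW} proved that every $(g,d)$-map graph $G$ is contained in $H'\boxtimes P\boxtimes K_\ell$ for some planar graph $H'$ with $\tw(H')\le 3$, some path $P$, and $\ell:=\max\{2g\floor{d/2},\,d+3\floor{d/2}-3\}$. This is precisely the hypothesis of \cref{RowTreewidthGen} with the bounded-treewidth graph equal to $H'$, the path equal to $P$, and $c=\ell$.

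Applying \cref{RowTreewidthGen} then yields a graph $L$ with $\tw(L)\le\tw(H')+1\le 4$ such that $G$ is contained in $L\boxtimes K_m$, where $m\le\sqrt{\ell n}$ since $G$ has $n$ vertices (the $\sqrt{\ell n}$ here is exactly the separator weight coming from the $(n,\sqrt{\ell n})$-separability of $P\boxtimes K_\ell$ established in \cref{PathKcSeparable}). This already gives the treewidth bound and the bound on $m$ claimed in the statement.

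It remains to check that $L$ may be taken to be an apex graph. For this I would look inside the proof of \cref{Separable}, on which \cref{RowTreewidthGen} relies: there $L$ is obtained from a disjoint union of copies $H_1,\dots,H_t$ of the bounded-treewidth graph---here copies of the \emph{planar} graph $H'$---together with a single dominant vertex $z$. Hence $L-z$ is a disjoint union of planar graphs and is therefore planar, so $L$ is apex, as required.

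There is essentially no obstacle here: the proposition is an immediate corollary of machinery already set up in this paper together with one external theorem. The only point that needs a little care is not the inequalities but the assertion that $H$ can be chosen apex, which is not visible from the statement of \cref{RowTreewidthGen} and must be read off from the explicit construction in the proof of \cref{Separable}.
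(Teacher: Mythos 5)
Your proposal is correct and follows the paper's own argument exactly: cite the \citet{DHHW} product structure theorem for $(g,d)$-map graphs, feed it into \cref{RowTreewidthGen} with $c=\ell$, and read the apex property off the explicit construction in the proof of \cref{Separable} (disjoint copies of the planar graph $H'$ plus one dominant vertex). You correctly flagged the only non-immediate point, namely that the apex claim is not visible from the statement of \cref{RowTreewidthGen} and requires unwinding the construction.
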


\citet{DEW17} showed that $n$-vertex $(g,d)$-map graphs have separation-number $\Theta(\sqrt{(g+1)(d+1)n})$. Thus \cref{MapGraphs} gives another example of a graph class with separation-number $cn^{1-\epsilon}$, where $\tw(H)$ is independent of $c$ in the corresponding product structure theorem. (Compare with the discussion after \cref{mainquestion}.)\ 

Motivated by \cref{Separable}, we give three more examples of $(n,m)$-separable graphs. The first is a multi-dimensional generalisation of \cref{PathKcSeparable}.

\begin{prop}
\label{PathProductKcSeparable}
For all paths $P_1,\dots,P_d$ and $c,n\in\NN$, the graph $G\coloneqq  P_1\boxtimes \dots\boxtimes P_d\boxtimes K_c$ is $(n,(dn)^{d/(d+1)} c^{1/(d+1)})$-separable. 
\end{prop}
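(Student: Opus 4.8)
## Proof Plan for Proposition~\ref{PathProductKcSeparable}

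The plan is to generalise the one-dimensional argument in \cref{PathKcSeparable} to $d$ dimensions by slicing along a single coordinate direction, chosen so that the removed slices are light, and then recursing on the remaining $(d{-}1)$-dimensional slabs. Fix a vertex-weighting of $G := P_1 \boxtimes \dots \boxtimes P_d \boxtimes K_c$ with non-negative real weights summing to $n$. Write the vertices as tuples $(v^1_{j_1}, \dots, v^d_{j_d}, a)$ where $v^i_{j_i}$ is a vertex of $P_i$ and $a \in V(K_c)$. Let $m$ be the target bound $(dn)^{d/(d+1)} c^{1/(d+1)}$, and introduce a spacing parameter $t$ (a positive integer, to be chosen); I will take $t \approx (n/c)^{1/(d+1)} d^{d/(d+1)}$ so that the two error terms below balance.

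First I would handle one coordinate, say the first. For $i \in \{1,\dots,t\}$ let $S_i$ be the set of vertices whose first coordinate index is $\equiv i \pmod t$; these $t$ sets partition $V(G)$, so the lightest one, $S_{i^\star}$, has weight at most $n/t$. Removing $S_{i^\star}$ from $G$ leaves a disjoint union of slabs, each of the form $P_1' \boxtimes P_2 \boxtimes \dots \boxtimes P_d \boxtimes K_c$ where $P_1'$ is a subpath of $P_1$ on at most $t-1$ vertices. The key point is that each such slab is a subgraph of $Q \boxtimes K_{c(t-1)}$ where $Q := P_2 \boxtimes \dots \boxtimes P_d$ is a $(d{-}1)$-dimensional grid product: collapse the $P_1'$ factor into the clique factor. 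Then I recurse: by induction on $d$, applied to $Q \boxtimes K_{c(t-1)}$ with its inherited weighting (of total weight at most $n$), there is a separator within each slab of weight at most some bound $m'$, with each remaining component having at most $m'$ vertices, where $m' = ((d{-}1)n)^{(d-1)/d} (c(t-1))^{1/d}$. Taking the union of $S_{i^\star}$ with the separators of all the slabs gives the desired set $S$, of total weight at most $n/t + m'$, and every component of $G - S$ lies inside one of the recursively-cut slabs, hence has at most $m'$ vertices.

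The remaining work is the arithmetic: I need to verify that with the stated choice of $t$, both $n/t$ and $m' = ((d{-}1)n)^{(d-1)/d}(c(t-1))^{1/d}$ are at most $m = (dn)^{d/(d+1)}c^{1/(d+1)}$, and likewise $m' \le m$ for the component-size bound. Plugging in $t \sim (dn/c)^{1/(d+1)}$ (rounding up to an integer, and bounding $t-1 \le t$), one gets $n/t \lesssim (dn)^{d/(d+1)}c^{1/(d+1)}/d$ and $m' \lesssim ((d{-}1)n)^{(d-1)/d}(c \cdot (dn/c)^{1/(d+1)})^{1/d}$, which after combining exponents of $n$ and $c$ collapses to at most $(dn)^{d/(d+1)}c^{1/(d+1)}$; the constant slack from $(d-1) \le d$ absorbs the rounding. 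One should double-check the base case $d=1$, which is exactly \cref{PathKcSeparable} (there $m = \sqrt{cn} \cdot (1)^{1/2}$ up to the harmless replacement of $\sqrt{cn}$-style rounding), and confirm the induction hypothesis is stated with the clique-factor size as a free parameter so that the step $c \mapsto c(t-1)$ is legitimate.

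The main obstacle I anticipate is purely bookkeeping rather than conceptual: keeping the induction hypothesis in a form flexible enough that the clique factor can grow at each level (from $c$ to $c(t-1)$), and choosing the integer spacing $t$ so that the floor/ceiling losses do not spoil the clean exponent $d/(d+1)$ on $n$ and $1/(d+1)$ on $c$. Once the recursion is set up with the right parametrisation, the inequality chain is routine. No genuinely hard estimate is needed — the Crossing-Lemma-type or separator-theorem machinery plays no role here; this is a direct grid-slicing argument.
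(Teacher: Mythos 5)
Your recursion is a genuinely different route from the paper's, which avoids induction entirely. The paper generalises the one-dimensional argument of \cref{PathKcSeparable} by slicing in all $d$ coordinate directions \emph{simultaneously}: with $m := \ceil{(dn/c)^{1/(d+1)}}$, let $S_j$ (for $j\in\{1,\dots,m\}$) be the set of vertices having \emph{some} coordinate $\equiv j\pmod m$. Each vertex lies in between $1$ and $d$ of the sets $S_j$, so $\sum_j \text{weight}(S_j)\leq dn$ and pigeonhole gives some $S_{j^\star}$ of weight at most $dn/m$; deleting $S_{j^\star}$ chops $G$ into boxes of side at most $m-1$ in every grid direction, so each component has at most $c(m-1)^d$ vertices. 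Both $dn/m$ and $c(m-1)^d$ are at most $(dn)^{d/(d+1)}c^{1/(d+1)}$. This is about four lines, with no induction hypothesis to massage, and it is where the factor $d$ (``each vertex lies in at most $d$ of the $S_j$'') comes from.

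Your dimension-by-dimension recursion can be made to work, but it hides a genuine difficulty that you dismiss as ``routine bookkeeping''. After you delete $S_{i^\star}$ and split $G$ into slabs, you \emph{cannot} apply the inductive hypothesis to each slab independently and then add the per-slab separator weights: the per-slab bound $n_j\mapsto ((d-1)n_j)^{(d-1)/d}(c(t-1))^{1/d}$ is concave in $n_j$, hence subadditive, so if there are $k$ equal-weight slabs the sum of these bounds is roughly $k^{1/d}$ times the single bound $m'$, and $k$ can be as large as $|V(P_1)|/t$. So the claim ``total weight at most $n/t+m'$'' does not follow from a per-slab application. What you actually need — and your parenthetical ``(of total weight at most $n$)'' suggests you may intend this, though you never say it — is to \emph{aggregate}: embed every slab into a single copy of $Q\boxtimes K_{c(t-1)}$, define on that one graph the weighting that sums, over all slabs, the weights pulled through these embeddings (total $\leq n$), apply the $(d-1)$-dimensional separability \emph{once} to this aggregated weighting to get a single separator $S'$ of weight at most $m'$, and then pull $S'$ back into each slab. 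The per-slab pullbacks are disjoint and jointly account for the aggregated weight of $S'$, so they do sum to at most $m'$, and each residual component embeds injectively into a component of $Q\boxtimes K_{c(t-1)}-S'$ and hence has at most $m'$ vertices. With that step made explicit the recursion closes and your exponent arithmetic (with $t=\ceil{(dn/c)^{1/(d+1)}}$, not the $d^{d/(d+1)}$ you wrote first) does check out — but the aggregation is the crux, not bookkeeping, and the paper's simultaneous slicing sidesteps it altogether.
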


\begin{proof}
Say $P_i=(1,2,\dots)$ and $V(K_c)=\{1,\dots,c\}$. Assume the vertices of $G$ are assigned non-negative weights, with total weight $n$. 
Let $m\coloneqq \ceil{(dn/c)^{1/(d+1)}}$.  
For $j\in\{1,\dots,m\}$, let 
$S_j$ be the set of all vertices $(x_1,\dots,x_d,\ell)$ in $G$ where $x_i\equiv j\pmod{m}$ for some $i\in\{1,\dots,d\}$. 
Each vertex of $G$ is in at least one and in at most $d$ such sets. Thus, the total weight of $S_1,\dots,S_m$ is at most $dn$. Thus $S_{j^\star}$ has total weight at most $\frac{dn}{m}$ for some $j^\star\in\{1,\dots,m\}$. Each component of $G-S_{j^\star}$ has at most $c(m-1)^d$ vertices. The result follows since 
$\frac{dn}{m}
\leq dn / (dn/c)^{1/(d+1)}
= (dn)^{d/(d+1)} c^{1/(d+1)}
$ and 
$c(m-1)^d 
\leq c (dn/c)^{d/(d+1)}
= (dn)^{d/(d+1)} c^{1/(d+1)}$
\end{proof}

Now consider trees. 

\begin{prop}
\label{TreeSeparable}
Every $n$-vertex tree $T$ with maximum degree $\Delta\geq 3$ is $\left(n,\frac{(1+o(1))n}{\log_{\Delta-1}n}\right)$-separable. 
\end{prop}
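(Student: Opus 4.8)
The plan is to imitate the proof of \cref{PathKcSeparable}: exhibit a family of pairwise-disjoint separators all of whose parts are small, and then average the weights over the family. Throughout, fix an $n$-vertex tree $T$ with maximum degree $\Delta\ge 3$ and a weighting of $V(T)$ of total weight $n$; the case $\Delta=2$ (where $T$ is a path) is covered by \cref{PathKcSeparable} with $c=1$, which gives the stronger $(n,\sqrt n)$-separability, and the finitely many small $n$ do not affect the asymptotic claim since every $n$-vertex tree is trivially $(n,n)$-separable. Root $T$ at a leaf, so that every non-root vertex has at most $\Delta-1$ children.

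Next I would set $k:=\lfloor \log_{\Delta-1} n-\log_{\Delta-1}\log_{\Delta-1} n\rfloor$ and $m:=\lceil n/k\rceil$, and for $j\in\{0,\dots,k-1\}$ let $S_j$ be the set of vertices whose depth is congruent to $j$ modulo $k$. The first step is the structural claim: for each $j$, every component of $T-S_j$ is a rooted subtree occupying at most $k-1$ consecutive levels whose root has at most $\Delta-1$ children, so it has at most $\sum_{i=0}^{k-2}(\Delta-1)^i<(\Delta-1)^{k-1}$ vertices; and the choice of $k$ makes $(\Delta-1)^{k-1}\le \frac{n}{(\Delta-1)\log_{\Delta-1}n}\le \frac nk\le m$. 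Hence each $S_j$ is a separator all of whose components have at most $m$ vertices.

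The second step is averaging: $S_0,\dots,S_{k-1}$ partition $V(T)$, so their weights sum to $n$, and some $S_{j^\star}$ has weight at most $n/k\le m$. Then $S:=S_{j^\star}$ witnesses that $T$ is $(n,m)$-separable. It remains to bound $m$: since $k=\log_{\Delta-1}n-O(\log\log n)=(1-o(1))\log_{\Delta-1}n$, we get $m=\lceil n/k\rceil=(1+o(1))\,n/k=\frac{(1+o(1))n}{\log_{\Delta-1}n}$, as required.

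The only delicate point is the calibration of $k$. We want $k$ as large as possible, since the separator weight we obtain is $n/k$; but we also need a window of $k-1$ levels of the $(\Delta-1)$-branching tree to contain at most $m\approx n/k$ vertices, which forces $k\lesssim\log_{\Delta-1}(n/k)$. The value $k\approx\log_{\Delta-1}n-\log_{\Delta-1}\log_{\Delta-1}n$ is essentially the largest choice for which this holds, and it is exactly what yields leading constant $1$; any $O(1)$-factor looseness in the component bound (from the exact geometric sum, or from rooting at an internal vertex instead of a leaf) is harmless, being absorbed into the $o(1)$. Everything else—verifying that each component of $T-S_j$ really spans at most $k-1$ levels, and the arithmetic of the final estimate—is routine.
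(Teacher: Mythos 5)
Your proof is correct and follows essentially the same approach as the paper's: root $T$ at a leaf so every vertex has at most $\Delta-1$ children, partition $V(T)$ by depth modulo $k\approx\log_{\Delta-1}n-\log_{\Delta-1}\log_{\Delta-1}n$, average the weights to find a light residue class, and bound each component of the complement by the geometric sum over at most $k-1$ consecutive levels. The only cosmetic differences are the choice of ceiling versus floor for the modulus and your explicit (and harmless) side-stepping of the degenerate $\Delta=2$ case.
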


\begin{proof}
Assume the vertices of $T$ are assigned non-negative weights, with total weight $n$. In this proof all logs are base $\Delta-1$.  
Let $m\coloneqq \ceil{\log n - \log \log n}\geq 2$. 
Root $T$ at a leaf vertex $r$. So each vertex has at most $\Delta-1$ children. For $j\in\{1,\dots,m\}$, let $V_j\coloneqq \{v\in V(T):\dist_T(v,r) \equiv j \pmod{m}\}$. 
Thus $V_1,\dots,V_m$ is a partition of $V(T)$. 
There exists $j^\star$ such that $V_{j^\star}$ has weight at most $\frac{n}{m}$. 
In $T-V_{j^\star}$, each component has radius at most $m-2$, so the number of vertices is at most 
$\sum_{i=0}^{m-2}(\Delta-1)^i
= ( (\Delta-1)^{m-1} -1)/(\Delta-2) <(\Delta-1)^{m-1}$. 
The result follows since 
$\frac{n}{m}\leq \frac{n}{\log n - \log \log n}\leq\frac{(1+o(1))n}{\log n}$
and
$(\Delta-1)^{m-1} < (\Delta-1)^{\log n - \log \log n} = \frac{n}{\log n}$.
\end{proof}

To get the $(n,o(n))$-separable result in \cref{TreeSeparable}, the bounded degree assumption is necessary. Suppose that for all $n$ every star $T$ is $(n,m)$-separable where $m<\frac{n}{3}$. Let $T$ be the star with $p$ leaves. Let $r$ be the centre vertex of $T$. Assign each leaf a weight of 1 and assign $r$ a weight of $\frac{p}{2}$. The total weight is $n\coloneqq \frac{3p}{2}$. So for some $m<\frac{n}{3}=\frac{p}{2}$ there is a set $S$ of 
vertices in $T$ with weight at most $m$ such that each component of $T-S$ has at most $m$ vertices. Since $r$ has weight $\frac{p}{2} > m$, we have $r\not\in S$. So every vertex in $S$ is a leaf, each of which has weight 1. So $|S|\leq m$. Thus $T-S$ is a star with at least $p-m> m$ leaves, which is a contradiction. Hence $m\geq\frac{n}{3}$. 

\cref{TreeSeparable} generalises as follows. 

\begin{prop}
\label{TreewidthDegreeSeparable}
For all $\Delta,k\in\NN$ there exists $\alpha>0$ such that every graph $G$ with maximum degree $\Delta$ and treewidth $k$ is $\left(n,\frac{\alpha n}{\log n}\right)$-separable. 
\end{prop}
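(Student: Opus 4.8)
The plan is to imitate the proof of \cref{TreeSeparable}, but applied to the decomposition tree of a tree-decomposition of $G$ rather than to a tree directly. The extra ingredient needed is that, because $G$ has bounded degree \emph{and} bounded tree-width, $G$ has a tree-decomposition of bounded width in which every vertex lies in at most two bags; that is, $G$ has bounded \emph{domino tree-width}. This is a theorem of Bodlaender and Engelfriet~\citep{BodEng-JAlg97}: there is an integer $w=w(\Delta,k)$ such that $G$ has a tree-decomposition $(B_x:x\in V(T))$ of width at most $w$ in which every vertex of $G$ belongs to at most two bags. Applying this to each connected component of $G$ and taking the disjoint union of the resulting decompositions, I may assume that every tree of the forest $T$ is the decomposition tree of a single connected component of $G$; root each of these trees arbitrarily.

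The first thing I would extract is that $T$ has bounded maximum degree. For each edge $xy$ of $T$, the bags $B_x$ and $B_y$ share a vertex (otherwise the component of $G$ being decomposed would be disconnected); and since each vertex of $B_x$ lies in at most one bag besides $B_x$, distinct neighbours of $x$ in $T$ contribute distinct vertices of $B_x$, so $\deg_T(x)\le|B_x|\le w+1=:D$. Now weight each node $x\in V(T)$ by $\mathrm{wt}(B_x):=\sum_{v\in B_x}\mathrm{wt}(v)$; since each vertex of $G$ lies in at most two bags, $\sum_{x\in V(T)}\mathrm{wt}(B_x)\le 2n$. Set the modulus $\mu:=\lfloor\log_D(n/\log n)\rfloor$, which is at least $2$ once $n$ exceeds a threshold $n_0=n_0(\Delta,k)$, and for $j\in\{0,\dots,\mu-1\}$ let $L_j$ be the set of nodes whose depth in their rooted tree of $T$ is congruent to $j$ modulo $\mu$. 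These sets partition $V(T)$, so some $L_{j^\star}$ has total weight at most $2n/\mu$; put $S:=\bigcup_{x\in L_{j^\star}}B_x$, so that $\mathrm{wt}(S)\le 2n/\mu$. Each component $T'$ of $T-L_{j^\star}$ spans only $\mu-1$ consecutive depths, hence is a subtree of depth at most $\mu-2$ rooted at its topmost node, and so $|V(T')|\le D^{\mu}$ because $\deg_T\le D$. By the standard argument --- any two adjacent vertices of $G-S$ lie in a common bag whose node is not in $L_{j^\star}$, and the subtrees of $T$ indexed by these vertices avoid $L_{j^\star}$ --- every component $G'$ of $G-S$ has all its vertices in $\bigcup_{x\in V(T')}B_x$ for a single component $T'$ of $T-L_{j^\star}$, whence $|V(G')|\le(w+1)\,|V(T')|\le(w+1)\,D^{\mu}$.

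It then remains to balance the two estimates. By the choice of $\mu$ we have $D^{\mu}\le n/\log n$, so each component of $G-S$ has at most $(w+1)n/\log n$ vertices; and $\mu\ge\tfrac12\log_D n$ for $n>n_0$, so $\mathrm{wt}(S)\le 2n/\mu\le 4(\log D)\,n/\log n$. Let $c$ be any integer with $c\ge\max\{\,w+1,\ 4\log D,\ \log_2 n_0\,\}$. Then for $n>2^c$ (which forces $n>n_0$) the set $S$ constructed above witnesses that $G$ is $\bigl(n,\tfrac{cn}{\log n}\bigr)$-separable, and for $n\le 2^c$ the set $S:=V(G)$ witnesses it, since then $\mathrm{wt}(S)=n\le\tfrac{cn}{\log n}$ and $G-S$ has no components. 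The only step I expect to be a genuine obstacle is the appeal to bounded domino tree-width --- it is there that both the bounded size of $T$'s bags and the bounded degree of $T$ come from; everything else is the bookkeeping of \cref{TreeSeparable}, now carried out on the decomposition tree rather than on $G$ itself.
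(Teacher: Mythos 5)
Your proof is correct, but it takes a genuinely different route from the paper's. The paper's argument is a short reduction: it invokes the theorem of Ding and Oporowski~\citep{DO95} that a graph of treewidth $k$ and maximum degree $\Delta$ is contained in $T\boxtimes K_c$ for a tree $T$ of bounded maximum degree, then notes that separability is preserved (up to a factor) by $\boxtimes K_c$ and by taking subgraphs, and finally applies the already-proved \cref{TreeSeparable} to $T$. You instead invoke the Bodlaender--Engelfriet domino-treewidth theorem~\citep{BodEng-JAlg97} and re-run the level-stripping argument of \cref{TreeSeparable} directly on the decomposition tree, weighting each node by the total weight of its bag and using the domino property both to bound the degree of the decomposition tree and to control the doubled total weight. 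The two structural inputs are closely related (bounded domino treewidth and containment in $T\boxtimes K_c$ with $T$ of bounded degree are near-equivalent ways of expressing the same phenomenon for bounded-degree, bounded-treewidth graphs), so the real difference is stylistic: the paper's proof is shorter on the page because it reuses \cref{TreeSeparable} and the two easy closure observations about separability as black boxes, whereas yours is more self-contained at the cost of redoing the congruence-class argument in a slightly more elaborate setting. Both yield the required $O_{\Delta,k}(n/\log n)$ bound.
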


\begin{proof}
Let $c\coloneqq 18(k+1)\Delta$. \citet{DW22} proved that $G$ is contained in $T\boxtimes K_c$ for some tree  $T$ with maximum degree at most $6\Delta$. Observe that if $H$ is any $(n,m)$-separable graph and $c\in\NN$, then $H\boxtimes K_c$ is $(n,cm)$-separable. 
By \cref{TreeSeparable}, $T\boxtimes K_c$ is $\left(n,\frac{c(1+o(1))n}{\log_{6\Delta-1}n}\right)$-separable. 
Observe that if $H$ is any $(n,m)$-separable graph, then every subgraph of $H$ is $(n,m)$-separable. Thus $G$ is 
$\left(n,\frac{c(1+o(1))n}{\log_{6\Delta-1}n}\right)$-separable. \end{proof}

\subsection{Bad News}

We now present a result that highlights the difficulty of \cref{mainquestion}. For simplicity, we focus on the $\epsilon=\frac12$ case. \cref{BadNews} below shows there are $n$-vertex graphs $G$ with $\td(G)\leq\sqrt{n}$ such that $G$ is contained in no graph $H\boxtimes K_m$ with $m\in O(\sqrt{n})$ and $\tw(H)$ bounded. 

\begin{prop}
\label{BadNews}
For any $c\in\NN$ there exist infinitely many $n\in\NN$ for which there is an $n$-vertex graph $G$ with $\td(G)\leq\sqrt{n}$ such that for any graph $H$, if $G$ is contained in $H\boxtimes K_m$ with  $m\leq c\sqrt{n}$, then 
$\omega(H)\geq \frac{ \log n}{4\log(c\log n)}$.
\end{prop}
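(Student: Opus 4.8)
The plan is to exhibit, for infinitely many $n$, an explicit graph $G$ and then reason directly about its $H$-partitions of width $m\le c\sqrt n$. I would build $G$ hierarchically: fix a \emph{depth} parameter $d$ and a \emph{branching} parameter $b$, chosen so that $b\approx (c\log n)^{4}$ and $d\approx\frac{\log n}{4\log(c\log n)}$, and define $G_0\subseteq G_1\subseteq\dots\subseteq G_d=:G$ recursively, where $G_i$ consists of $b$ disjoint copies of $G_{i-1}$ together with a small "connector" gadget that is made complete to every vertex of every copy of $G_{i-1}$. The point of the connector is twofold: it forces the part(s) meeting it to be mutually adjacent in $H$ and adjacent in $H$ to every part meeting the sub-copies below it; and it is cheap enough (contributing only a bounded amount to any elimination forest) that, by induction on $i$, $\td(G)=\td(G_{d-1})+O(1)\le O(d)$, which is far below $\sqrt n$ --- so the tree-depth constraint holds with room to spare. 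A vertex count gives $n=|V(G)|=\Theta(b^{d})$, and I would choose $b,d$ precisely so that the recursion depth is at least $\frac{\log n}{4\log(c\log n)}$.

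For the lower bound on $\omega(H)$, suppose $G$ is contained in $H\boxtimes K_m$ with $m\le c\sqrt n$; by \cref{ProductPartition} fix an $H$-partition $\PP=(V_x:x\in V(H))$ of width $m$, and assume w.l.o.g.\ that $H$ is the quotient graph of $\PP$. I would run a top-down \emph{peeling} argument that maintains a clique $C\subseteq V(H)$ and a "current" copy $\Gamma$ of some $G_i$ inside $G$, subject to the invariant that every part meeting $\Gamma$ that is not already in $C$ is $H$-adjacent to \emph{every} part in $C$, and that only few vertices of $\Gamma$ lie in $\bigcup_{x\in C}V_x$. At each step I look at the connector of $\Gamma$: if it is not yet absorbed into $C$, I add the (few, pairwise-adjacent, and $C$-adjacent) parts meeting it to $C$, so $|C|$ grows; in either case I then descend into one of the $b$ sub-copies of $G_{i-1}$ chosen to minimise the number of its vertices lying in collected parts. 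Since collected parts together occupy $O(m)$ vertices of $\Gamma$ at each step and these are spread over $b$ sub-copies, a careful accounting of how the "damage" propagates shows it stays bounded by roughly $m/b$, so after a bounded number of "wasted" steps the connector of the next sub-copy is necessarily fresh and $|C|$ increases again. Iterating through all $d$ levels yields $\omega(H)\ge|C|\ge\Omega(d)\ge\frac{\log n}{4\log(c\log n)}$.

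The main obstacle is exactly the interaction between $m=\Theta(\sqrt n)$ and the recursion: a part of size $c\sqrt n$ is enormous compared with the connectors, so it can, in principle, "short-circuit" the hierarchy by swallowing the top several levels of $G$ at once (this is precisely why naive choices --- e.g.\ blow-ups of trees, powers of paths, or tree-closures --- fail: one can collapse a $\sqrt n$-sized prefix into a single part and get $\omega(H)=O(1)$). The construction must therefore be designed so that no $O(1)$ parts of size $c\sqrt n$ can destroy more than an $O(\log b)=O(\log\log n)$-fraction of the recursive structure, which forces the relationship $b\approx(c\log n)^{4}$ (so that $\log_b m = O(1)$ "per usable level" is \emph{not} what happens --- rather the number of genuinely productive levels is $\Omega(d/\log_b m)$, and the parameters are tuned to keep this quantity $\ge\frac{\log n}{4\log(c\log n)}$). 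Verifying this robustness --- i.e.\ that the "damage budget" argument actually survives all $d$ levels --- together with the precise arithmetic relating $b$, $d$, $n$, $m$ and the target bound, is where the real work lies; the rest of the proof is bookkeeping around the invariant.
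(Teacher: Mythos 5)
Your high-level strategy --- build a hierarchical graph, then trace a root-to-leaf traversal and argue each level contributes a new part, hence a new vertex of a clique in $H$ --- is the right idea and matches the paper in spirit. But two things are off, one an error of judgment and one a genuine gap in the argument.

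First, you explicitly dismiss tree-closures as a ``naive choice that fails,'' but the paper's construction \emph{is} a tree-closure: $G$ is the closure of the $(h-1)$-subdivision $T'_\ell$ of the complete $d$-ary tree of vertex-height $\ell$, with $d:=c\ell^2$ and $h:=c^{\ell-1}\ell^{2\ell-4}$. The reason your intuition (``a $\sqrt n$-sized part can swallow a $\sqrt n$-sized prefix'') does not apply is precisely the long subdivision: $h$ is taken on the order of $\sqrt n$, so the tree-depth budget $\sqrt n$ is spent almost entirely on stretching each edge, not on adding extra branching levels, and the argument is run at only $\ell\approx\log n/\log\log n$ coarse ``levels.'' The subdivision is the padding your ``small connector gadget'' is missing: in your construction the connector contributes only $O(1)$ to any elimination forest, i.e.\ has $O(1)$ vertices, and that is exactly what breaks your argument.

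Second, the damage-budget peeling argument has a quantitative gap. Descending once into the least-damaged sub-copy reduces the total damage from a single part by roughly a factor $b$, but a part has size up to $m\approx c\sqrt n$, so it takes $\Theta(\log_b m)=\Theta\!\left(\frac{\log n}{\log\log n}\right)$ descents --- that is, a constant fraction of \emph{all} $d$ levels --- before the residual damage drops below the connector size $O(1)$. Until that happens, the adversary can keep the $O(1)$-vertex connectors entirely inside already-collected parts, so $|C|$ grows by only $O(1)$ over the whole traversal, not by $\Omega(d)$. The paper avoids a multi-level damage accumulation argument entirely: its Claim is a per-level counting inequality. If the root-to-$v$ path already hits $j-1$ parts $P_1,\dots,P_{j-1}$, then the set $X$ of descendants of $v$ at the next subdivision-level has $|X|=hd$, while any $\ell-1$ parts of size at most $\frac{hd-1}{\ell-1}$ cover at most $hd-1<|X|$ vertices, so a fresh part must be hit. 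This works because the padding $hd$ is tuned to beat $(\ell-1)\cdot m$, not merely $m/b$ after one step, and it does not require tracking how damage propagates across levels. If you want to repair your version, you essentially have to make the ``connector'' a long subdivided stalk of $\Theta(h)$ universal vertices, and replace the peeling argument by this static counting claim --- at which point you have reproduced the paper's proof.
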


\begin{proof}
Fix any integer $\ell\geq 2$. 
Let 
$d\coloneqq c\ell^2$ and 
$h\coloneqq  c^{\ell-1} \ell^{2\ell-4}$. 
Note that $h,\ell\in\NN$.

For $j\in\{1,\dots,\ell\}$, let $T_j$ be the complete $d$-ary tree of vertex-height $j$, and let $T'_j$ be the $(h-1)$-subdivision of $T_j$. Consider $T'_1 \subseteq T'_2 \subseteq \dots\subseteq T'_\ell$  with a common root vertex $r$. 

\begin{claim}
For every partition $\PP$ of $V(T'_j)$ with parts of size at most $\frac{hd-1}{\ell-1}$ there exists a root--leaf path in $T'_j$ intersecting at least $j$ parts of $\PP$. 
\end{claim}

\begin{proof}
We proceed by induction on $j$. Since $|V(T'_1)|=1$ the $j=1$ case is trivial. Assume the $j-1$ case holds. Let $\PP$ be any partition of $V(T'_j)$ with parts of size at most $\frac{hd-1}{\ell-1}$.
By induction, there is a leaf $v$ of $T'_{j-1}$ such that the $vr$-path in $T'_{j-1}$ intersects at least $j-1$ parts $P_1,\dots,P_{j-1}$ of $\PP$. Let $X$ be the set of descendants of $v$. If $X \subseteq P_1\cup \dots \cup P_{j-1}$ then $$hd=|X|\leq |P_1\cup\dots\cup P_{j-1}|\leq (j-1)\,\tfrac{hd-1}{\ell-1} \leq hd-1,$$
which is a contradiction. Thus there is a vertex $x\in X\setminus (P_1\cup\dots\cup P_{j-1})$. Let $y$ be any leaf-descendent of $x$. Thus, the $yr$-path in $T'_j$ 
intersects at least $j$ parts of $\PP$, as desired. \end{proof}

Let $G$ be the closure of $T'_\ell$. 
Let 
$$n\coloneqq |V(G)| = h (\tfrac{d}{d-1})( d^{\ell-1}-1)+1.$$ 
We need the following lower bound on $n$:
\begin{align}
n \geq h d^{\ell-1} 
= c^{\ell-1} \ell^{2\ell-4} (c\ell^2)^{\ell-1} 
= c^{2\ell-2} \ell^{4\ell-6}. \label{nlb}
\end{align}
And we need the following upper bound on $n$:
\begin{align}
n(\ell-1)^2 & =
( h (\tfrac{d}{d-1})( d^{\ell-1}-1)+1 )(\ell-1)^2 \nonumber\\
& \leq h (\tfrac{d}{d-1})( d^{\ell-1}) (\ell-1)^2 \nonumber\\
& < h  d^{\ell-1} \ell^2 \nonumber\\
& =  (c^{\ell-1} \ell^{2\ell-4}) d^2 (c\ell^2)^{\ell-3} \ell^2 \nonumber\\
& =  c^{2\ell-4} d^2 \ell^{4\ell-8} \label{nub}.
\end{align}
The vertex-height of $T'_\ell$ equals $h(\ell-1)+1$. By \eqref{nlb}, 
$$\td(G) = h(\ell-1)+1 
\leq h\ell = c^{\ell-1} \ell^{2\ell-3} \leq \sqrt{n}.$$
Assume that $G$ is contained in $H\boxtimes K_m$ for some graph $H$ and integer $m\leq c\sqrt{n}$. By \eqref{nub}, 
$$m(\ell-1) 
\leq c\sqrt{n}(\ell-1) 
< c^{\ell-1} \ell^{2\ell-4} d 
= hd.$$
Thus $m\leq\frac{hd-1}{\ell-1}$. By \cref{ProductPartition}, there is an $H$-partition of $G$ with width at most $\frac{hd-1}{\ell-1}$. 
By the claim, there exists a root--leaf path in $T'_\ell$ intersecting at least $\ell$ parts of $\PP$. 
These $\ell$ parts form a clique in $H$. 
Thus $\omega(H) \geq \ell $.
By \cref{nub},  $n < c^{2\ell} \ell^{4\ell}$, which implies
$\omega(H) \geq \ell \geq 
\frac{\log n}{4 \log(c\log n)}$, as desired. 
\end{proof}

\cref{BadNews} is not a negative answer to \cref{mainquestion} since $G$ is a single graph, not a hereditary class. Indeed, the graphs in \cref{BadNews} have unbounded complete subgraphs,  and therefore are in no hereditary class with strongly sublinear separation-number. This result can be interpreted as follows: A natural strengthening of \cref{mainquestion} (with $\epsilon=\frac12$) says that every $n$-vertex graph $G$ with $\td(G)\in O(\sqrt{n})$ is contained in $H\boxtimes K_m$, for some graph $H$ with $O(1)$ treewidth, where $m\in  O(\sqrt{n})$. \cref{BadNews} says this strengthening is false. So it is essential that $\GG$ is a hereditary class in \cref{mainquestion}.

\subsection{Future Directions}

We conclude by listing several unsolved special cases of \cref{mainquestion} of particular interest:
\begin{itemize}
\item Does \cref{mainquestion} hold for touching graphs of 3-D spheres, which have  $O(n^{2/3})$ separation-number  \citep{MTTV97}?

\item \citet{EG17} defined a graph $G$ to be \defn{$k$-crossing-degenerate} if $G$ has a drawing in the plane such that the associated crossing graph is $k$-degenerate. They showed that such graphs have $O(k^{3/4}n^{1/2})$ separation-number. It is open whether \cref{mainquestion} holds for $k$-crossing-degenerate graphs. The same question applies for $k$-gap-planar graphs~\citep{GapPlanar18}. 
\item Does \cref{mainquestion}  hold for string graphs on $m$ edges, which have  $O(m^{1/2})$ separation-number  \citep{Lee16,Lee17}? 
\item Does \cref{mainquestion} hold for graphs with layered tree-width $k$, which have $O(\sqrt{kn})$ separation-number \citep{DMW17}?
\item See \citep{SW98,DMN21} for many geometric intersection graphs where \cref{mainquestion} is unsolved and interesting. 
\end{itemize}

\subsection*{Acknowledgement}

This work was initiated at the 2022 Workshop on Graphs and Geometry held at the Bellairs Research Institute. Thanks to the organisers for creating a wonderful working environment, and to  Robert Hickingbotham  for helpful conversations. Thanks to Daniel Harvey who corrected an error in the  first proof of \cref{TreeSep}.

\small
\bibliographystyle{DavidNatbibStyle}
\bibliography{DavidBibliography}
\end{document}